\documentclass[12pt]{amsproc}
\usepackage[T1]{fontenc}
\usepackage{lmodern}
\usepackage{eulervm}
\usepackage{tikz}
\usetikzlibrary{calc, arrows.meta}
\input{paperPackages.sty}
\title[Diffeological Universal Connection]
    {A Diffeological Construction of Singer's Universal Connection}
\author[D. Mann]
    {Dion Mann}

\begin{document}

\begin{abstract}
    We provide a rigorous construction of I.M. Singer's \textit{universal connection}, a natural connection on a bundle of paths associated to any manifold, using the theory of diffeology. Furthermore, we generalize the universal connection to the diffeological setting, which enables the reconstruction of diffeological principal bundles with connections from their holonomy representations. We show that any two diffeological bundle-connection pairs with conjugate holonomy representations must be equivalent in a certain sense. These constructions are functorial in that, ultimately, our results can be summarized as an equivalence of categories between the so-called \textit{holonomy category} and the category of diffeological bundle-connection pairs.
\end{abstract}

\maketitle

\section{Introduction}

Let $(M, \bullet)$ be a pointed, path-connected smooth manifold. In \cite{Singer}, I.M. Singer develops a so-called \textit{universal connection} associated to $(M, \bullet)$ as follows. The collection of piecewise-smooth paths $\alpha : [0,1] \to M$ starting at the fixed point (that is, $\alpha(0) = \bullet)$ is denoted by $\mathcal F_\bullet(M)$. We have the \textit{target map} $\tau : \mathcal F_\bullet(M) \to M$ given by $\tau(\alpha) = \alpha(1)$. For each $x \in M$, the fiber $\tau\inverse(x)$ is the set of all paths in $M$ starting at $\bullet$ and ending at $x$. Let $\Omega_\bullet(M) = \tau\inverse(\bullet)$. Singer makes the observation that $\tau : \mathcal F_\bullet(M) \to M$ is something like a ``principal $\Omega_\bullet(M)$-bundle,'' which has a natural horizontal lifting function: for a path $\alpha : [0,1] \to M$ and an initial ``point'' $\beta_0 \in \tau\inverse(\alpha(0))$, we set $\bar \alpha_{\beta_0}(s) : [0,1] \to M$ to be the path in $M$ which first follows $\beta_0$ and then follows $\alpha$ until $\alpha(s)$. In this manner, we obtain a path $\bar \alpha_{\beta_0} : [0,1] \to \mathcal F_\bullet(M)$ into the total space which is the ``horizontal lift'' of $\alpha$. See Figure \ref{fig:SingerHorizontalLift}. 
\begin{figure}[h]
    \begin{subfigure}[b]{0.3\textwidth}
        \centering
        \begin{tikzpicture}[scale=1,>=Stealth,thick]

        \draw[thick] (1, 1.5) -- (0, 0);
        \draw[thick] (0, 0) -- (5, 0);

        \coordinate (x) at (4, 0.8);
        \fill (x) circle (3pt);

        \draw[thick] (2,1.4) .. controls (1.8,1.2) and (1.2,0.8) .. (1.5,0.4);
        \node at (1.1, 0.6) {$\alpha$};

        %%%

        \def\k{2} % vertical shift

        \draw[thick] (1, 1.5 + \k) -- (0, 0 + \k);
        \draw[thick] (0, 0 + \k) -- (5, 0 + \k);

        \coordinate (x0) at (4, 0.8 + \k);
        \coordinate (a0) at (2, 1.4 + \k);
        \coordinate (pa0) at (2, 1.4);
        \coordinate (a1) at (1.6, 1 + \k);
        \coordinate (pa1) at (1.6, 1);
        \coordinate (a2) at (1.5, 0.4 + \k);
        \coordinate (pa2) at (1.5, 0.4);

        % At t = 0:
        \draw[green!40!gray, thick] (x0) .. controls (3.8, 3.3) and (2.2, 2.8) .. (a0);
        \node[green!40!gray] at (3.2, 3.3) {$\beta_0$};
        \draw[thick] (2,1.4 + \k) .. controls (1.8,1.2 + \k) and (1.2,0.8 + \k) .. (1.5,0.4 + \k);
        \node at (1.1, 0.6 + \k) {$\alpha$};
        \fill[green!40!gray] (a0) circle (2pt);
        \fill[green!40!gray] (pa0) circle (2pt);
        \node[green!40!gray] at (2.6, 1.5) {$\alpha(0)$};
        \fill (x0) circle (3pt);

        \end{tikzpicture}
        \caption{$\bar \alpha_{\beta_0}(s)$ at $s = 0$.}
        \label{fig:SingersHorizontalLiftInitial}
     \end{subfigure}
     \hspace{1.5cm}
     \begin{subfigure}[b]{0.3\textwidth}
        \centering
        \begin{tikzpicture}[scale=1,>=Stealth,thick]

        \draw[thick] (1, 1.5) -- (0, 0);
        \draw[thick] (0, 0) -- (5, 0);

        \coordinate (x) at (4, 0.8);
        \fill (x) circle (3pt);

        \draw[thick] (2,1.4) .. controls (1.8,1.2) and (1.2,0.8) .. (1.5,0.4);
        \node at (1.1, 0.6) {$\alpha$};

        %%%

        \def\k{2} % vertical shift

        \draw[thick] (1, 1.5 + \k) -- (0, 0 + \k);
        \draw[thick] (0, 0 + \k) -- (5, 0 + \k);

        \coordinate (x0) at (4, 0.8 + \k);
        \coordinate (a0) at (2, 1.4 + \k);
        \coordinate (pa0) at (2, 1.4);
        \coordinate (a1) at (1.6, 1 + \k);
        \coordinate (pa1) at (1.6, 1);
        \coordinate (a2) at (1.5, 0.4 + \k);
        \coordinate (pa2) at (1.5, 0.4);

        % At t:
        \draw[green!40!gray, thick] (x0) .. controls (3.8, 3.3) and (2.2, 2.8) .. (a0);
        \draw[thick, green!40!gray] (2,1.4 + \k) .. controls (1.8,1.2 + \k) and (1.2,0.8 + \k) .. (1.5,0.4 + \k);
        \node at (1.1, 0.6 + \k) {$\alpha$};

        \draw[thick,
                dash pattern=on 70pt off 50pt,
                dash phase=100pt] 
            (2,1.4+\k) .. controls (1.8,1.2+\k) and (1.2,0.8+\k) .. (1.5,0.4+\k);

        \node[green!40!gray] at (3.2, 3.3) {$\beta_0$};
        \node[green!40!gray] at (2.3, 1) {$\alpha(t)$};

        \fill (x0) circle (3pt);
        \fill[green!40!gray] (a1) circle (2pt);
        \fill[green!40!gray] (pa1) circle (2pt);

        \end{tikzpicture}
        \caption{$\bar \alpha_{\beta_0}(s)$ at $s = t$.}
        \label{fig:SingersHorizontalLiftArbitrary}
     \end{subfigure}
     \caption{Singer's universal connection. The green paths represent the element of $\mathcal F_\bullet(M)$ determined by $\bar \alpha_{\beta_0}(s)$, which project down to the green point $\alpha(s)$ via the target map $\tau : \mathcal F_\bullet(M) \to M$. Note that $\tau \circ \bar \alpha_{\beta_0} = \alpha$.}
     \label{fig:SingerHorizontalLift}
\end{figure}

An application of Singer's universal connection is to prove a certain reconstruction-style theorem of Kobayashi:
\begin{theorem*}[\cite{Kobayashi}]
    Let $(M, \bullet)$ be a pointed smooth manifold which is path-connected and let $G$ be a Lie group. Given a (continuous) homomorphism $H : \Omega_{\bullet}(M) \to G$, there exists a pointed principal $G$-bundle $\pi : (E, \xi_\bullet) \to (M, \bullet)$ and a connection $A$ on $\pi : E \to M$ such that the holonomy representation $H^A_{\xi_\bullet} : \Omega_{\bullet}(M) \to G$, defined implicitly by the equation
    \[
    \bar \gamma_{\xi_\bullet}(1) = \xi_\bullet \cdot H^A_{\xi_\bullet}(\gamma), \quad \text{for all } \gamma \in \Omega_{\bullet}(M),
    \]
    where $\bar \gamma_{\xi_\bullet}$ is the horizontal lift of $\gamma$ starting at $\xi_\bullet \in \pi\inverse(\bullet)$, realizes $H$; that is, $H^A_{\xi_\bullet} = H$.
\end{theorem*}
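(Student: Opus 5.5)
The plan is to build the bundle as an associated bundle of Singer's path bundle and let its connection be the one induced by the universal connection. Set
\[
E = \mathcal F_\bullet(M) \times_{\Omega_\bullet(M)} G,
\]
the quotient of $\mathcal F_\bullet(M)\times G$ by $(\beta, g) \sim (\beta\cdot\gamma, H(\gamma)\inverse g)$ for $\gamma \in \Omega_\bullet(M)$. Here $\beta\cdot\gamma$ is the concatenation ``first $\gamma$, then $\beta$,'' and I work with thin-homotopy (or reparametrization) classes so that concatenation is an honest group action. The projection is $\pi[\beta,g]=\tau(\beta)$, the right $G$-action is $[\beta,g]\cdot h=[\beta,gh]$, and the basepoint is $\xi_\bullet=[c_\bullet,e]$, with $c_\bullet$ the constant path.

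\textbf{Step 1: the fibers are $G$-torsors.} Fix $x$ and $\beta_0,\beta_1\in\tau\inverse(x)$. Then $\beta_1 = \beta_0\cdot\gamma$ for the loop $\gamma = \beta_0^{-1}\beta_1$, at least up to the equivalence we quotient by. This uses path-connectedness and that $H$ is a homomorphism, hence constant on the relevant classes. Freeness and transitivity of the $G$-action on $\pi\inverse(x)$ then follow directly.

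\textbf{Step 2: smooth structure and connection.} For local triviality I pick a contractible chart $U\ni x$ and a fixed path $\beta_x$ from $\bullet$ to $x$. For $y\in U$, let $\sigma(y)=\beta_x$ followed by the radial path in $U$ from $x$ to $y$. This gives $\phi_U: U\times G\to \pi\inverse(U)$, $(y,g)\mapsto[\sigma(y),g]$. The transition functions are $y\mapsto H(\sigma_U(y)^{-1}\sigma_V(y))$. Their smoothness, at least continuity upgraded via the Lie group structure, is where the continuity hypothesis on $H$ enters. I expect this to be the main obstacle: one needs a topology or diffeology on $\Omega_\bullet(M)$ for which the map $y\mapsto \sigma_U(y)^{-1}\sigma_V(y)$ is smooth, so that its composite with $H$ is smooth. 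Since the paper works diffeologically, I would try first to show these plots are smooth in the path diffeology and that $H$ is smooth. Alternatively, I would argue that continuous homomorphisms restricted to such finite-dimensional families give smooth maps.

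The connection is defined by its horizontal lifts. For a path $\alpha$ from $\tau(\beta_0)$, set $\widetilde\alpha(s)=[\bar\alpha_{\beta_0}(s),g]$, with $\bar\alpha_{\beta_0}$ Singer's lift. This is well defined because concatenation on the left by $\gamma$ commutes with appending $\alpha|_{[0,s]}$ on the right. It is $G$-equivariant, compatible with concatenation of paths, and invariant under reparametrization. In the trivialization $\phi_U$ it reads $s\mapsto (\alpha(s), H(\sigma(\alpha(s))^{-1}\beta_0\alpha|_{[0,s]})g)$. Smooth dependence of this expression on the data yields a $\mathfrak g$-valued $1$-form $A$ on $U$, or equivalently a parallel transport functor, whose lifts are exactly $\widetilde\alpha$.

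\textbf{Step 3: holonomy.} For $\gamma\in\Omega_\bullet(M)$, the lift starting at $\xi_\bullet$ ends at
\[
[c_\bullet\gamma, e] = [c_\bullet\cdot\gamma, e] = [c_\bullet, H(\gamma)] = \xi_\bullet\cdot H(\gamma),
\]
using the defining relation of $\sim$. Hence $H^A_{\xi_\bullet}=H$.
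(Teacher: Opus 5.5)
Your architecture is the paper's: the associated bundle $\mathcal F_\bullet(M)\times_H G$ of Singer's path bundle, the connection induced by Singer's lift (the paper's $\Theta^H$, Lemma \ref{lemAssociatedUniversalConnection}), and a Step 3 holonomy computation that is literally the one in the proof of Theorem \ref{thmKobayashi}.

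The gap is in Step 2, exactly where you flagged it, and it cannot be closed from a continuity hypothesis. Your fallback, that a continuous homomorphism is automatically smooth along finite-dimensional families of loops, is false. Take $M=\R^2$, $G=(\R,+)$ and $H([\gamma])=\int_\gamma |x|\,dy$. This is a homomorphism, continuous for the $C^1$ topology, and constant even on thin-homotopy classes. Now take the smooth family of based loops $\gamma_r$ that go out along a straight segment to $(r-1,0)$, once around the square $[r-1,r]\times[0,1]$, and back. A direct computation gives $H([\gamma_r])=|r|-|r-1|$, which is not smooth at $r=0$. The holonomy of a smooth connection depends smoothly on a smooth family of loops, so no smooth bundle-connection pair realizes this $H$. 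Likewise your transition functions $y\mapsto H(\sigma_U(y)^{-1}\sigma_V(y))$ need not be smooth.

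So the missing ingredient is a smoothness hypothesis on $H$. That is why the paper's own version (Theorem \ref{thmKobayashi}) assumes $H$ is a smooth homomorphism for the quotient of the functional diffeology. With that hypothesis, your transition functions are $H$ composed with plots of $\Omega_\bullet$ (built as in Lemmas \ref{lemRadialPaths} and \ref{lemPathConcatenationIsSmooth}), so they are smooth automatically; this is what Proposition \ref{claimAssociatedBundleIsPrincipalBundle} uses.

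Second, even with $H$ smooth, the sentence ``smooth dependence of this expression on the data yields a Lie-algebra-valued $1$-form'' is not justified. Consider the nearly-trivial based loops that follow $\sigma(\alpha(s))$, then $\alpha|_{[s,s+\epsilon]}$, then $\sigma(\alpha(s+\epsilon))^{-1}$. You would need the derivative at $\epsilon=0$ of $H$ on these loops to depend only on $(\alpha(s),\alpha'(s))$, and linearly on $\alpha'(s)$. That is the substance of Barrett-type reconstruction theorems (cf.\ \cite{Barrett}), which use thin-homotopy invariance essentially, and the phrase ``or equivalently a parallel transport functor'' hides exactly this theorem. The paper never produces a $1$-form: its connection is the diffeological projection of Definition \ref{defConnection}, and $\Theta^H$ is checked against those axioms directly.

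Relatedly, reparametrization classes alone do not make $\Omega_\bullet(M)$ a group, since there are no inverses. Switching to thin-homotopy classes silently strengthens the hypothesis on $H$. The paper uses retrace equivalence, on which every diffeological holonomy is well defined (Proposition \ref{claimHolonomyRetraceEquivalence}).
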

The bundle-connection pair in Kobayashi's theorem is the associated bundle $E = \mathcal F_{\bullet}(M) \times_H G$ with connection induced by the universal connection.

Although the argument is convincing enough, Singer seems to admit the construction is a bit heuristic in \cite{Singer}. We provide a fully rigorous and complete framework for Singer's universal connection using the theory of \textit{diffeology}. Diffeology is a theory of generalized smooth spaces which first appeared in \cite{Souriau} in the study of diffeomorphism groups. Such a framework appears most appropriate: the bundle $\tau : \mathcal F_\bullet(M) \to M$ is a genuine diffeological bundle (Proposition \ref{claimPrincipalOmegaBundle}), and the universal connection can be modified to give a diffeological connection (Proposition \ref{claimUniversalConnectionIsDiffeologicalConnection}) which gives rise to the original horizontal lift of Singer (Proposition \ref{claimHorizontalLiftOfUniversalConnection}).

Furthermore, these notions generalize to the category of diffeological spaces and are not just restricted to smooth manifolds. We show:
\begin{theorem*}[Diffeological Bundle Reconstruction]
    Let $(X, \bullet)$ be a pointed, path-connected diffeological space and $G$ a diffeological group. If $H : \Omega_\bullet(X) \to G$ is a smooth group homomorphism, then there exists a diffeological principal $G$-bundle equipped over $X$ with a diffeological connection $\Theta^H$ whose holonomy representation coincides with $H$.
\end{theorem*}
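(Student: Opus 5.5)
The plan is to mimic Kobayashi's construction, replacing each heuristic ingredient by its diffeological counterpart.

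\textbf{The bundle.} Let $\mathcal F_\bullet(X)$ be the space of (stationary, i.e.\ lazy) smooth paths in $X$ starting at $\bullet$. Give it the subspace diffeology from the functional diffeology on paths, and let $\tau(\alpha)=\alpha(1)$ be the target map. Take $\Omega_\bullet(X)=\tau\inverse(\bullet)$, modulo thin homotopy or reparametrization so that concatenation gives a group, and let it act on the right by precomposition $\alpha\cdot\gamma=\gamma*\alpha$. The first step is the analogue of Proposition~\ref{claimPrincipalOmegaBundle} for $X$:
\begin{itemize}
\item show that $\tau:\mathcal F_\bullet(X)\to X$ is a diffeological principal $\Omega_\bullet(X)$-bundle;
\item equivalently, show that the map $(\alpha,\gamma)\mapsto(\alpha,\gamma*\alpha)$ is an induction;
\item and show that $\tau$ is a subduction.
\end{itemize}
Surjectivity of $\tau$ uses path-connectedness. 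Local lifting of plots $P:U\to X$ near $r_0$ uses the smooth contraction of a ball: choose a path $\beta_0$ to $P(r_0)$ and set $r\mapsto\beta_0*P(\text{segment } r_0\to r)$. The stationarity condition makes the concatenation smooth.

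\textbf{Transport to $G$.} Form the associated bundle $E=\mathcal F_\bullet(X)\times_H G$, the quotient of $\mathcal F_\bullet(X)\times G$ by $(\alpha\cdot\gamma,g)\sim(\alpha,H(\gamma)g)$. Since $H$ is a smooth homomorphism, standard diffeological results say that associated bundles of principal bundles along smooth homomorphisms are principal $G$-bundles. This is checked on local pullbacks, where the induction property passes to the quotient. The basepoint is $\xi_\bullet=[c_\bullet,e]$, where $c_\bullet$ is the constant path.

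\textbf{The connection.} A diffeological connection in Iglesias-Zemmour's sense assigns to each path in the base and each initial point a horizontal lift. The assignment must be smooth in the functional diffeology, $G$-equivariant, compatible with reparametrization and concatenation, and satisfy $\pi\circ\text{lift}=$ path. On $\mathcal F_\bullet(X)$ the universal connection (Proposition~\ref{claimUniversalConnectionIsDiffeologicalConnection}) is $(\alpha,\beta_0)\mapsto(s\mapsto\beta_0*\alpha|_{[0,s]})$, with $\alpha|_{[0,s]}$ reparametrized smoothly. Define
\[
\Theta^H(\alpha,[\beta_0,g])=\bigl(s\mapsto[\beta_0*\alpha|_{[0,s]},g]\bigr).
\]
Then verify the following:
\begin{itemize}
\item Well-definedness: replacing $(\beta_0,g)$ by $(\gamma*\beta_0,H(\gamma)^{-1}g)$ commutes with appending $\alpha$, because the action is on the initial segment and the lift is on the terminal one.
\item Equivariance in $g$ is clear.
\item Smoothness follows from smoothness of the universal lift composed with the quotient map.
\end{itemize}

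\textbf{Holonomy.} For $\gamma\in\Omega_\bullet(X)$, lift from $\xi_\bullet$ to get endpoint $[c_\bullet*\gamma,e]=[\gamma,e]=[c_\bullet\cdot\gamma,e]=[c_\bullet,H(\gamma)]=\xi_\bullet\cdot H(\gamma)$. Hence $H^{\Theta^H}_{\xi_\bullet}=H$.

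\textbf{Main obstacle.} The hard part is smoothness. The concatenation and restriction operations $(\beta,\alpha,s)\mapsto\beta*\alpha|_{[0,s]}$ must be smooth in the functional diffeology, and the sub-step that the action map is an induction is the most delicate. I would first handle this by restricting to stationary paths with a fixed reparametrization convention, so that all splicings are smooth. For the induction I would recover $\gamma$ from $\alpha$ and $\gamma*\alpha$ by a smooth "un-concatenation". This needs care because $\alpha$ is only determined up to the chosen equivalence. If that fails, I would quotient $\mathcal F_\bullet(X)$ by thin homotopy and verify the bundle property on that quotient instead.
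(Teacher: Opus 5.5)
Your overall architecture matches the paper's. You build the pointed path bundle $\tau:\mathcal F_\bullet(X)\to X$ and the associated bundle $\mathcal F_\bullet(X)\times_H G$, use Singer's lift $s\mapsto[\beta_0*\alpha|_{[0,s]},g]$ (the paper's Lemma \ref{lemAssociatedHorizontalLift}), and do the same holonomy computation at $\llbracket\bullet,e\rrbracket$. The gap is in the foundation all of this rests on: which equivalence relation is imposed, and on which space. This is also why your ``main obstacle'' stays open.

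In your main plan, $\mathcal F_\bullet(X)$ consists of stationary paths, and only the loops are quotiented, by a relation you leave undecided. None of the options gives what the statement needs:
\begin{itemize}
\item If elements of $\mathcal F_\bullet(X)$ are honest paths, then $\alpha\cdot\gamma=\gamma*\alpha$ depends on the representative of $\gamma$. Concatenation of representatives is neither associative nor invertible, so there is no group action. For the same reason, the identifications $[c_\bullet*\gamma,e]=[\gamma,e]=[c_\bullet\cdot\gamma,e]$ in your holonomy step are unjustified.
\item If everything is taken modulo reparametrization only, you still do not get a group, since $\gamma^{\leftarrow}*\gamma$ is not a reparametrization of $c_\bullet$. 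This is also why recovering $\gamma$ from $(\alpha,\gamma*\alpha)$ looks like a delicate ``un-concatenation''.
\item The thin-homotopy fallback does produce a group, but the wrong one. The $\Omega_\bullet(X)$ in the statement is loops modulo reparametrization and retracing (Definition \ref{defRetraceEquivalence}). Thin-homotopy classes form a further quotient of it, and nothing in the hypotheses makes the given $H$ factor through that quotient. You would only prove the theorem for thin-homotopy-invariant $H$.
\end{itemize}

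The missing idea is to impose retrace equivalence on all of $\mathcal F_\bullet(X)$, the same relation that defines $\Omega_\bullet(X)$. Then $[\alpha^{\leftarrow}]$ cancels $[\alpha]$, and the action $[\alpha]\vee[\gamma]=[\alpha\vee\gamma]$ is well defined. The induction property becomes a one-liner: on the image of $\Gamma$, $\Gamma\inverse([\alpha],[\beta])=([\alpha],[\alpha^{\leftarrow}\vee\beta])$. This is smooth because concatenation and reversal are smooth (Lemma \ref{lemPathConcatenationIsSmooth}, Proposition \ref{claimLoopSpaceDiffeologicalGroup}). No representative-dependent un-concatenation is needed. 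Together with your radial-path argument for local lifting, this is exactly Proposition \ref{claimPrincipalOmegaBundle}.

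A smaller point: the statement asks for a connection in the sense of Definition \ref{defConnection}, a horizontal projection $\hatP E\to\mathcal P E$, not a lifting function. Starting from your lift, you must define $\Theta^H_r\tilde\alpha$ as the lift of $\pi\circ\tilde\alpha$ through $\tilde\alpha(r)$ at an arbitrary initial time $r$. For $s<r$ this uses the backward segment $\alpha^{r\to s}$, not $\alpha|_{[0,s]}$. You then need to verify the basepoint, projection and reparametrization axioms, which your checklist omits. The paper does this directly for $\Theta$ in Theorem \ref{claimUniversalConnectionIsDiffeologicalConnection} and then passes to $\Theta^H$.
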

As a corollary, we have the following holonomy-based classification of diffeological bundles:
\begin{theorem*}
    Let $\pi : E \to X$ and $\pi' : E' \to X$ be diffeological principal $G$-bundles. If there exists diffeological connections $A$ and $A'$ on the respective bundles $\pi$ and $\pi'$ whose holonomy representations agree up to conjugation, then there is a $G$-bundle isomorphism from $\pi$ to $\pi'$ that takes $A$ to $A'$.
\end{theorem*}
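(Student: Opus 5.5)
The plan is to compare both bundles with the reconstructed bundle $\mathcal F_\bullet(X)\times_H G$ of the Diffeological Bundle Reconstruction theorem, and then compose the resulting isomorphisms.

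First we reduce to the case of equal holonomy. Fix basepoints $\xi_\bullet\in\pi\inverse(\bullet)$ and $\xi'_\bullet\in\pi'^{-1}(\bullet)$, and write $H=H^A_{\xi_\bullet}$ and $H'=H^{A'}_{\xi'_\bullet}$. Since $\xi_\bullet\cdot g$ is carried by horizontal lifts to $\bar\gamma_{\xi_\bullet}(1)\cdot g$, we have $H^A_{\xi_\bullet g}=g\inverse H^A_{\xi_\bullet}g$. So if $H'=g\inverse H g$, we may replace $\xi_\bullet$ by $\xi_\bullet g$ and assume $H=H'$. It then suffices to build, for a single bundle-connection pair $(\pi,A,\xi_\bullet)$ with holonomy $H$, a $G$-bundle isomorphism
\[
\Phi:\mathcal F_\bullet(X)\times_H G\to E,\qquad \Phi[\beta,g]=\bar\beta_{\xi_\bullet}(1)\cdot g,
\]
that takes $\Theta^H$ to $A$. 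Applying this to both pairs, $\Phi'\circ\Phi\inverse$ is the required isomorphism from $\pi$ to $\pi'$.

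Next come the algebraic checks. $\Phi$ is well defined, since $[\beta\ast\gamma,g]=[\beta,H(\gamma)g]$ (with whichever concatenation convention the paper uses for the $\Omega_\bullet(X)$-action), and $\bar{(\beta\ast\gamma)}_{\xi_\bullet}(1)=\bar\beta_{\xi_\bullet}(1)\cdot H(\gamma)$ by uniqueness and equivariance of horizontal lifts. It is $G$-equivariant, and it covers the identity because $\pi\circ\Phi[\beta,g]=\beta(1)=\tau(\beta)$. It is surjective onto each fibre by path-connectedness of $X$ together with transitivity of $G$ on fibres. It is injective because $\bar\beta_{\xi_\bullet}(1)g=\bar\beta'_{\xi_\bullet}(1)g'$ forces the loop $\beta\ast\overleftarrow{\beta'}$ to have holonomy $g'g\inverse$, which identifies the two classes.

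Then we check that horizontal lifts correspond. By Proposition \ref{claimHorizontalLiftOfUniversalConnection}, the $\Theta^H$-horizontal lift of a path $\alpha$ starting at $[\beta_0,g]$ is $s\mapsto[\beta_0\ast\alpha|_{[0,s]},g]$. Its image under $\Phi$ is $\overline{(\beta_0\ast\alpha|_{[0,s]})}_{\xi_\bullet}(1)\cdot g$, which is exactly the $A$-horizontal lift of $\alpha$ from $\Phi[\beta_0,g]$. Since a diffeological connection is determined by its lifting of paths, $\Phi^\ast A=\Theta^H$.

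The main obstacle is smoothness of $\Phi$ and of $\Phi\inverse$. For $\Phi$: by the definition of the quotient and product diffeologies, it suffices to show that for a plot $P:U\to\mathcal F_\bullet(X)$ the map $u\mapsto\overline{P(u)}_{\xi_\bullet}(1)$ is smooth. I would get this from the smoothness axiom of a diffeological connection, namely that the lifting map on the diffeological path space, with the initial point varying smoothly, is smooth, composed with evaluation at $1$. For $\Phi\inverse$ I would work locally: given a plot $Q:U\to E$, pass to a local trivialization and a smooth local choice of paths $\beta_u$ from $\bullet$ to $\pi Q(u)$. If needed, construct these via the local path-lifting property of the diffeological bundle $\tau$ from Proposition \ref{claimPrincipalOmegaBundle}, which gives local smooth sections of $\tau$. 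Then $\Phi\inverse Q(u)=[\beta_u, g(u)]$, where $g(u)$ is defined by $\bar{(\beta_u)}_{\xi_\bullet}(1)\,g(u)=Q(u)$, and $g$ is smooth because the division map $E\times_X E\to G$ is smooth for a diffeological principal bundle. The existence of such local smooth sections of $\tau$ is the point I expect to need the most care.
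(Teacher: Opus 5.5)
Your proposal is correct and is essentially the paper's argument. Your $\Phi\llbracket[\beta],g\rrbracket=\bar\beta_{\xi_\bullet}(1)\cdot g$ is exactly the isomorphism of Theorem \ref{thmReconstruction}, including the smoothness of $\Phi\inverse$, which the paper gets from local smooth families of paths as in Lemma \ref{lemRadialPaths} together with Lemma \ref{claimCoveringPlotsPrincipal}; the corollary is then obtained by composing two such maps. The only difference is that you absorb the conjugating element by moving the basepoint to $\xi_\bullet g$ (using $H^A_{\xi_\bullet g}=g\inverse H^A_{\xi_\bullet}g$), whereas the paper keeps the basepoints and inserts the explicit isomorphism $\llbracket[\alpha],h\rrbracket_H\mapsto\llbracket[\alpha],gh\rrbracket_{H'}$, checking that it intertwines $\Theta^H$ and $\Theta^{H'}$; the two amount to the same thing.
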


We also show the diffeological bundle reconstruction is invertible in the following sense. For a fixed diffeological group $G$, the collection of all triples $(X, x_0, H)$, where $H : \Omega_{x_0}(X) \to G$ is a smooth group homomorphism, forms a category $\Holonomy^G$ which we can call the \textit{holonomy category}. The diffeological bundle reconstruction theorem then defines a functor $\Holonomy^G \to \BundleConnection^G$, where $\BundleConnection^G$ is the category of diffeological principal bundle-connection pairs, which is inverse to a type of ``forgetful functor'' $\BundleConnection^G \to \Holonomy^G$. This means:
\begin{theorem*}
    There is an equivalence of categories \upshape 
    \[
    \Holonomy^G \leftrightarrows \BundleConnection^G.
    \]
\end{theorem*}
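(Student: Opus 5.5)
We will exhibit two functors
\[
\mathcal H : \BundleConnection^G \to \Holonomy^G, \qquad \mathcal R : \Holonomy^G \to \BundleConnection^G,
\]
and show that $\mathcal H \circ \mathcal R$ is equal, or at least naturally isomorphic, to the identity, while $\mathcal R \circ \mathcal H$ is naturally isomorphic to the identity.

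\textbf{The functor $\mathcal H$.} Objects of $\BundleConnection^G$ should be pointed pairs $(\pi : E \to X, \xi_\bullet, A)$, since the holonomy representation needs a basepoint in the fiber. A morphism is a smooth map covering a pointed smooth map $f : X \to Y$ that is $G$-equivariant, preserves the fiber basepoints, and pulls the connection back to the connection. On objects, $\mathcal H$ sends such a triple to $(X, \pi(\xi_\bullet), H^A_{\xi_\bullet})$. Smoothness of $H^A_{\xi_\bullet}$ on $\Omega_\bullet(X)$ follows from the smooth dependence of horizontal lifts on the path, which is part of the diffeological connection axioms. On morphisms, connection-preserving bundle maps send horizontal lifts to horizontal lifts. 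Hence $H^{A'}_{\xi'} \circ f_* = H^A_{\xi}$, which is exactly the morphism condition in $\Holonomy^G$, namely a pointed smooth map compatible with the homomorphisms.

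\textbf{The functor $\mathcal R$.} On objects, $\mathcal R$ is the Diffeological Bundle Reconstruction theorem:
\[
(X, x_0, H) \mapsto \bigl(\mathcal F_{x_0}(X) \times_H G,\ [c_{x_0}, e],\ \Theta^H\bigr),
\]
where $c_{x_0}$ is the constant path. A morphism $f : (X,x_0,H) \to (Y,y_0,K)$ satisfies $K \circ f_* = H$. It induces
\[
[\beta, g] \mapsto [f\circ\beta, g],
\]
which is well defined exactly because $K(f_*\gamma) = H(\gamma)$. This map is smooth since composition with $f$ is smooth on path spaces, equivariant, and basepoint-preserving. It preserves the connections because $f$ carries Singer's concatenation lifts to concatenation lifts (Proposition \ref{claimHorizontalLiftOfUniversalConnection}). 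Functoriality is immediate. Since the reconstruction theorem says the holonomy of $\Theta^H$ at $[c_{x_0},e]$ is $H$, we get $\mathcal H\circ\mathcal R = \mathrm{id}$ on objects, and on morphisms by inspection.

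\textbf{The isomorphism $\mathcal R \circ \mathcal H \cong \mathrm{id}$.} This step carries the classification corollary. Given $(E,\xi_\bullet,A)$ with $H = H^A_{\xi_\bullet}$, define
\[
\Phi : \mathcal F_\bullet(X)\times_H G \to E, \qquad [\beta,g] \mapsto \bar\beta_{\xi_\bullet}(1)\cdot g.
\]
It is well defined, since $\bar{(\beta\ast\gamma)}(1) = \bar\beta(1)\cdot H(\gamma)$ for loops $\gamma$, and equivariant and basepoint-preserving by construction. It is connection-preserving because the horizontal lift of $\alpha$ from $[\beta,g]$ maps to the $A$-horizontal lift of $\alpha$ from $\Phi[\beta,g]$. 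Naturality in morphisms of $\BundleConnection^G$ is again the fact that bundle maps preserve horizontal lifts.

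\textbf{Main obstacle.} The hard part is showing that $\Phi$ is a diffeomorphism. Surjectivity uses path-connectedness of $X$ together with the transitive $G$-action on fibers. Injectivity comes from freeness of the action. For smoothness of $\Phi^{-1}$, the plan is to work locally on plots. Given a plot $P : U \to E$, locally lift $\pi\circ P$ to a plot of $\mathcal F_\bullet(X)$; this uses the local triviality of $\tau$ from Proposition \ref{claimPrincipalOmegaBundle}, possibly after shrinking $U$. Then solve for the $G$-component via the smooth division map $E\times_X E \to G$ of a principal bundle. This requires checking that the diffeological notion of principal bundle gives a smooth division map; if not, we would instead use local triviality of $\pi$ directly.
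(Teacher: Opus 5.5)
Your proposal is correct and follows the paper's route: the forgetful holonomy functor, the reconstruction functor given on morphisms by $\llbracket[\beta],g\rrbracket \mapsto \llbracket[f\circ\beta],g\rrbracket$, the identity $\mathcal H\circ\mathcal R=\mathrm{id}$ from the holonomy computation in Theorem \ref{thmKobayashi}, and $\mathcal R\circ\mathcal H\cong\mathrm{id}$ via the map $\Phi$ of Theorem \ref{thmReconstruction}; your explicit naturality check fills in a step the paper leaves implicit. Your worry about a smooth division map is unfounded, since it follows directly from condition (i) of Definition \ref{defDiffeologicalPrincipalBundle}, and the paper packages it as Lemma \ref{claimCoveringPlotsPrincipal}, which is exactly what it uses to prove $\Phi^{-1}$ smooth.
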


\textbf{Outline of paper.} In Section 2, we briefly provide the necessary background theory of diffeology, including diffeological fiber bundles and connections. We then carefully set up Singer's path spaces and universal connection using the diffeological framework in Section 3, followed by its generalization to arbitrary diffeological spaces. Finally, we apply these constructions to prove our main results in Section 4.

\textbf{History: Smooth Structures in Gauge Theory.} The general study of connections on principal bundles is the topic of \textit{gauge theory}. Over time, the focus of gauge theory has shifted to include path structures, where one is interested in the parallel transport of not just points but paths. This notion is called \textit{higher gauge theory}, for which there are a number of approaches. For example, categorical geometry is used in \cite{baez2006highergaugetheory, CLS2014}, in comparison to more classical differential-geometric techniques in \cite{Ambar_ParallelTransportOverPathSpaces, CLS2017}.

In any case, the use of generalized smooth structures in place of classical manifolds naturally arises so as to provide a formal treatment of path spaces. There are several notions for generalized smooth spaces, for which many of them are explained in \cite{baez2009convenientcategoriessmoothspaces}. In particular, K.T. Chen introduced the notion of a ``differentiable space'' (now called \textit{Chen spaces}) in \cite{ChenIteratedIntegrals,ChenDifferentiableSpaces} to study iterated path integrals. A few years later, J.M. Souriau introduced a closely-related type of smooth space (now called \textit{diffeological spaces}) in \cite{Souriau} to study diffeomorphism groups. P. Iglesias-Zemmour then expanded on the theory of diffeology to include fibrations in his thesis \cite{Iglesias-ZemmourThesis} and more recently published the general textbook ``Diffeology'' \cite{PIZ_Diffeology}. Notably, the groundwork for diffeological connections on diffeological bundles has been laid in \cite{PIZ_Diffeology}, which we depend on for this work.

\textbf{Acknowledgements.} I thank my Ph.D. advisor, Dr. Ambar Sengupta, my friend, Garett Cunningham, and my wife, Heidi Benham, for their meaningful and helpful discussions.
\section{Background in Diffeology}
\label{secBackgroundInDiffeology}
We present the necessary background theory of diffeology in this section. The reader who is already acquainted with diffeology may skip to Section \ref{secPathSpacesAndUniversalConnection}, although may need to check back for notation, especially in Section \ref{subsec:DiffConnectionsAndHolonomy}. The standard reference is the book \cite{PIZ_Diffeology}, which we follow here.

\subsection{Basics of Diffeology}
Let $X$ be any set (note we do not assume a priori any structure or topologies on $X$).
\begin{definition}
    \label{defDiffeology}
    A \textbf{diffeology} on $X$ is a collection $\mathcal D$ consisting of maps $\psi : U \to X$, where $U \se \R^n$ is an open subset (here $n \geq 0$ is allowed to vary), subject to the following axioms:
    \begin{enumerate}[(i)]
        \item \emph{Smooth Constants}: If $\psi : U \to X$ is constant, then $\psi \in \mathcal D$.
        \item \emph{Locality}: If, given $\psi : U \to X$, there exists an open cover $U = \bigcup_\alpha U_\alpha$ such that each $\psi\upharpoonright U_\alpha \in \mathcal D$, then $\psi \in \mathcal D$.
        \item \emph{Chain Rule}: If $\psi : U \to X$ is a member of $\mathcal D$ and $F : V \to U$ is a $\smooth$-map from an open subset $V \se \R^k$, then $\psi \circ F \in \mathcal D$.
    \end{enumerate}
\end{definition}

We call the pair $(X, \mathcal D)$ a \textbf{diffeological space}. We will immediately be less formal and denote by $X$ a diffeological space. Members of $\mathcal D$ are called \textbf{plots for $X$}, which we think of as ``smooth parameterizations.''

\begin{definition}
    \label{defSmoothMap}
    A map $f : X \to Y$ of diffeological spaces is called \textbf{smooth} if for each plot $\psi$ for $X$, the composite $f\circ \psi$ is a plot for $Y$. A \textbf{diffeomorphism} is a smooth map with a smooth inverse.
\end{definition}

The set of smooth maps $f : X \to Y$ is denoted by $\smooth(X,Y)$. It is worth noting that a parameterization $\psi : U \to X$ is a plot for $X$ if and only if it is smooth, where $U$ has the evident diffeology. Also, replacing the name ``diffeological space'' with ``smooth manifold'' yields all the expected results (smooth maps between diffeological spaces which are manifolds are the usual smooth maps, fiber bundles $\pi : E \to X$ for diffeological spaces which are manifolds are the usual fiber bundles, etc.).

\begin{definition}[Basic Constructions in Diffeology]
    \label{defBasicConstructions}
    Let $X$ and $Y$ be diffeological spaces, and let $\{ X_\alpha : \alpha \in J \}$ be an arbitrary family of diffeological spaces.
    \begin{enumerate}
        \item \textbf{Subspace Diffeology.} A subset $A \se X$ is a diffeological space whose plots $U \to A$ are, when regarded as maps $U \to X$, plots for $X$.
        \item \textbf{Product Diffeology.} The cartesian product $\prod_{\alpha \in J} X_\alpha$ is a diffeological space whose plots $\psi : U \to \prod_\alpha X_\alpha$, which have the form $\psi(r) = (\psi_\alpha(r))_{\alpha \in J}$ for $\psi_\alpha : U \to X_\alpha$, satisfy that each $\psi_\alpha : U \to X_\alpha$ is a plot.
        \item \textbf{Quotient Diffeology.} For an equivalence relation $\sim$ on $X$, the quotient $X/\!\sim$ is a diffeological space with plots $\psi : U \to X/\!\sim$ which locally are lifts of plots for $X$. More explicitly: for each $r_0 \in U$, there exists a neighborhood $V \se U$ of $r_0$ and a plot $r \mapsto x_r$ for $X$ defined on $V$ such that
        \[
        (\psi\upharpoonright V)(r) = [x_r],
        \]
        where $[\cdot]$ denotes the equivalence class for $\sim$.
        \item \textbf{Functional Diffeology.} The space of smooth maps $\smooth(X,Y)$ is itself a diffeological space whose plots 
        \[
        \psi : U \to \smooth(X,Y),\quad r \mapsto \psi_r
        \]
        are such that the induced evaluation map 
        \[
        \hat \psi : U \times X \to Y,\quad  \hat \psi(r,x) = \psi_r(x)
        \]
        is smooth.
    \end{enumerate}
\end{definition}

\begin{remark}
    There are diffeological co-products, but we have no need of them here.
\end{remark}

The fact that the collection of smooth maps $\smooth(X, Y)$ is itself a diffeological space is one of the main reasons to consider diffeology as a framework for higher gauge theory. As an important application, we have:
\begin{definition}
    \label{defFunctionalDiffeologyOfDiffeology}
    Let $\mathcal D$ be a diffeology on $X$. Then $\mathcal D$ itself carries a natural diffeology called the \textbf{functional diffeology of a diffeology}. Namely, a parameterization $\psi : U \to \mathcal D$, $r \mapsto \psi_r$ for $\mathcal D$ is a plot if and only if for each $r_0 \in U$ and $s_0 \in \dom(\psi_{r_0})$, there are open neighborhoods $V \se U$ of $r_0$ and $W \se \dom(\psi_{r_0})$ of $s_0$ such that:
    \begin{enumerate}[(i)]
        \item For all $r \in V$, one has $W \se \dom(\psi_r)$.
        \item The evaluation map $V \times W \to X$ given by $(r,s) \mapsto \psi_r(s)$ is a plot for $X$.
    \end{enumerate}
\end{definition}

We will use the functional diffeology of a diffeology in our treatment of path spaces. In the functional diffeology, composition becomes a smooth operation, as is easy to verify:
\begin{lemma}
    \label{lemCompositionIsSmooth}
    Let $X, Y$, and $Z$ be diffeological spaces. Then, the composition map
    \begin{align*}
    \circ : \smooth(X,Y) \times \smooth(Y, Z) &\to \smooth(X, Z)\\
    (f, g) &\mapsto g \circ f
    \end{align*}
    is smooth.
\end{lemma}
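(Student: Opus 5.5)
Unwinding the definitions reduces the claim to a composition of plots. Take a plot of the product $\smooth(X,Y) \times \smooth(Y,Z)$. By the product diffeology (Definition \ref{defBasicConstructions}(2)), it has the form $r \mapsto (f_r, g_r)$ on an open $U \se \R^n$, where $r \mapsto f_r$ is a plot for $\smooth(X,Y)$ and $r \mapsto g_r$ is a plot for $\smooth(Y,Z)$. By the functional diffeology (Definition \ref{defBasicConstructions}(4)), this means the evaluation maps
\[
\hat f : U \times X \to Y,\ (r,x) \mapsto f_r(x), \qquad \hat g : U \times Y \to Z,\ (r,y) \mapsto g_r(y)
\]
are smooth. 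We must show that $r \mapsto g_r \circ f_r$ is a plot for $\smooth(X,Z)$, that is, that
\[
\Phi : U \times X \to Z,\quad (r,x) \mapsto g_r(f_r(x))
\]
is smooth.

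The key step is to factor $\Phi$ as
\[
U \times X \xrightarrow{\ (\mathrm{pr}_U,\, \hat f)\ } U \times Y \xrightarrow{\ \hat g\ } Z .
\]
The map $\hat g$ is smooth by hypothesis. Smoothness of $(r,x) \mapsto (r, \hat f(r,x))$ follows from the product diffeology, since each of its components is smooth. The projection $\mathrm{pr}_U$ is smooth because a plot of $U \times X$ has components that are plots, and $\hat f$ is smooth by hypothesis. A composite of smooth maps is smooth, since plots pull back through each map in turn. Hence $\Phi$ is smooth.

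The only point needing care, and the closest thing to an obstacle, is being precise about plots of $U \times X$. Such a plot is a pair $(\rho, \phi)$ with $\rho : V \to U$ a smooth map and $\phi : V \to X$ a plot. Then $\Phi \circ (\rho,\phi) = \hat g\circ(\rho, \hat f\circ(\rho,\phi))$, and this is a plot of $Z$ because $\hat f$ and $\hat g$ take plots to plots. No locality argument is needed, so the verification is direct.
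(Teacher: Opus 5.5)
Your proof is correct, and it is the direct definition-check the paper has in mind: the paper gives no written proof and only remarks that the lemma is ``easy to verify.'' Your factorization $\Phi = \hat g \circ (\mathrm{pr}_U, \hat f)$, verified on plots of $U \times X$, fills that in, together with the trivial observation that each $g_r \circ f_r$ lies in $\smooth(X,Z)$, so the composition map is well defined.
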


\subsection{Diffeological Fiber and Principal Bundles}
Fiber bundles have an upgrade to the diffeological setting.
\begin{definition}
    \label{defDiffeologicalFiberBundle}
    Let $E, F,$ and $X$ be diffeological spaces. A \textbf{diffeological fiber bundle with fiber $F$} is a smooth surjection $\pi : E \to X$ such that, for each plot $\psi : U \to X$, the pullback bundle $\psi^*E \to U$ is locally trivial with fiber $F$. 
    \[
    \begin{tikzcd}
        \psi^*E \arrow{r}{\pr_2} \arrow[swap]{d}{\pr_1} \arrow[dr, phantom, "\lrcorner", very near start] & E \arrow{d}{\pi} \\
        U \arrow[swap]{r}{\psi} & X
    \end{tikzcd}
    \]
    That is, for each point $r \in U$, there is a neighborhood $V \se U$ of $r$ and a diffeomorphism $T : \psi^*(E)\upharpoonright V \to V \times F$, where $\psi^*(E)\upharpoonright V$ is the restricted bundle $\pr_1\inverse(V) \se \psi^*(E)$. Such a $T$ is called a \textbf{plot trivialization} of the fiber bundle $\pi : E \to X$.
\end{definition}

Let $G$ be a diffeological group (a group which is a diffeological space such that the multiplication and inversion maps are smooth). Suppose $G$ acts on $E$ smoothly from the right; that is, the map $\Gamma : E \times G \to E \times E$ defined by $\Gamma(p, g) = (p, pg)$ is smooth. Iglesias-Zemmour showed that if $\Gamma$ is a diffeomorphism onto its image, then the projection $E \to E/G$ is a fiber bundle with fiber space $G$ in \cite{PIZ_Diffeology}. This leads to the natural definition of a diffeological principal bundle:
\begin{definition}
    \label{defDiffeologicalPrincipalBundle}
    Let $\pi : E \to X$ be a smooth surjection of diffeological spaces and $G$ a diffeological group which acts smoothly on $E$ from the right. Then $\pi : E \to X$ is a \textbf{diffeological principal $G$-bundle} if the following hold:
    \begin{enumerate}[(i)]
        \item The map $\Gamma : E \times G \to E \times E$, $(p, g) \mapsto (p, pg)$ is a diffeomorphism onto its image.
        \item There is a diffeomorphism $\Phi : X \to E/G$ making the following triangle commute:
        \[
        \begin{tikzcd}
        & E \arrow[swap]{dl}{\pi} \arrow{dr}{[\cdot]} & \\
        X \arrow[swap]{rr}{\Phi} & & E/G \arrow[swap]{ll}{\cong}
        \end{tikzcd}
        \]
        That is, $\Phi(\pi(\xi)) = [\xi] = \xi\cdot G$.
    \end{enumerate}
\end{definition}

Note that condition (i) of Definition \ref{defDiffeologicalPrincipalBundle} implies the action of $G$ on $E$ is free. The following is a helpful, equivalent characterization of diffeological principal $G$-bundles which is more oriented towards the local trivialization view.

\begin{proposition}
    \label{claimEquivariantTrivializations}
    The following are equivalent:
    \begin{enumerate}
        \item $\pi : E \to X$ is a diffeological principal $G$-bundle.
        \item For each plot $\psi : U \to X$, there is an open cover $U_\alpha$ of $U$, plots $\Psi_\alpha : U_\alpha \to E$ such that $\pi \circ \Psi_\alpha = \psi \upharpoonright U_\alpha$, and diffeomorphisms $T_\alpha : U_\alpha \times G \to \pr_1\inverse(U_\alpha) \se \psi^*E$ of the form 
        \[
        T_\alpha(r, g) = (r, \Psi_\alpha(r)g).
        \]
        In particular, $T_\alpha(r, g_1 g_2) = T_\alpha(r, g_1) g_2$ for each $g_1, g_2 \in G$, with the evident action on the right-hand side. We call such $T_\alpha$ \textbf{$G$-equivariant plot trivializations}.
    \end{enumerate}
\end{proposition}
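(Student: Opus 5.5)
I will prove both directions directly from the definitions, using the quotient diffeology on $E/G$ to produce local lifts and condition (i) to make the trivializations diffeomorphisms.

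\textbf{(1) $\Rightarrow$ (2).} Let $\psi : U \to X$ be a plot. By (ii), $\Phi\circ\psi : U \to E/G$ is a plot for the quotient diffeology. Hence each $r_0\in U$ has a neighborhood $U_\alpha$ and a plot $\Psi_\alpha : U_\alpha \to E$ with $[\Psi_\alpha(r)] = \Phi(\psi(r))$. Since $\Phi(\pi(\Psi_\alpha(r)))=[\Psi_\alpha(r)]$ and $\Phi$ is injective, we get $\pi\circ\Psi_\alpha = \psi\upharpoonright U_\alpha$.

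Define $T_\alpha(r,g) = (r,\Psi_\alpha(r)g)$. This map lands in $\psi^*E\upharpoonright U_\alpha$ because $\pi$ is $G$-invariant, and it is smooth because the action is smooth. It is injective by freeness. It is surjective: if $(r,\xi)\in\psi^*E$, then $[\xi]=\Phi(\psi(r))=[\Psi_\alpha(r)]$, so $\xi=\Psi_\alpha(r)g$ for some $g$.

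For smoothness of the inverse, take a plot $s\mapsto (r_s,\xi_s)$ of $\psi^*E\upharpoonright U_\alpha$. Then $s\mapsto(\Psi_\alpha(r_s),\xi_s)$ is a plot of $E\times E$ lying in $\operatorname{im}\Gamma$. Because $\Gamma$ is a diffeomorphism onto its image, $s\mapsto \Gamma^{-1}(\Psi_\alpha(r_s),\xi_s) = (\Psi_\alpha(r_s), g_s)$ is smooth, so $g_s$ is smooth and $T_\alpha^{-1}(r_s,\xi_s)=(r_s,g_s)$ is a plot. Equivariance of $T_\alpha$ is immediate from associativity of the action.

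\textbf{(2) $\Rightarrow$ (1).} I first show $G$-orbits are exactly the fibers of $\pi$. Apply (2) to constant plots $\psi\equiv x$: bijectivity of $T_\alpha$ says the fiber $\pi^{-1}(x)$ is a single free $G$-orbit, and $\pi(\xi g)=\pi(\xi)$. So $\Phi(x) := \pi^{-1}(x)$ is a well-defined bijection $X\to E/G$ with $\Phi\circ\pi=[\cdot]$; surjectivity of $\pi$ is also implicit in the constant-plot case.

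$\Phi$ is smooth: for a plot $\psi$ of $X$, locally $\Phi\circ\psi = [\Psi_\alpha]$, which is the quotient of a plot. $\Phi^{-1}$ is smooth: a plot of $E/G$ locally lifts to a plot $\eta$ of $E$, and $\Phi^{-1}[\eta]=\pi\circ\eta$ is a plot.

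For (i), $\Gamma$ is smooth and injective by freeness. Let $s\mapsto(\xi_s,\eta_s)$ be a plot in $\operatorname{im}\Gamma$, and set $\psi=\pi\circ\xi$, a plot of $X$. Locally choose $T_\alpha$ for $\psi$. The maps $s\mapsto(s,\xi_s)$ and $s\mapsto(s,\eta_s)$ are plots of $\psi^*E$, since $\pi(\eta_s)=\pi(\xi_s)$. Applying $T_\alpha^{-1}$ gives smooth $a_s,b_s$ with $\xi_s=\Psi_\alpha(s)a_s$ and $\eta_s=\Psi_\alpha(s)b_s$. Hence $\eta_s=\xi_s a_s^{-1}b_s$, and $s\mapsto a_s^{-1}b_s$ is smooth, so $\Gamma^{-1}$ is smooth by locality.

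The main obstacle is the bookkeeping in these two "diffeomorphism onto image" arguments: pulling plots of $\psi^*E$ back through $\Gamma^{-1}$ in the forward direction, and building $\Gamma^{-1}$ from the local sections $\Psi_\alpha$ in the reverse direction, using the subspace and fiber-product diffeologies correctly.
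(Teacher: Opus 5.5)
Your proof is correct in both directions. The paper gives no argument of its own here; it defers to Iglesias-Zemmour's book, where the action is on the left. Your write-up is therefore a self-contained replacement for that citation, and it follows the natural route. In (1)$\Rightarrow$(2) you obtain the local sections $\Psi_\alpha$ by lifting the plot $\Phi\circ\psi$ of $E/G$ through the quotient diffeology. You then get smoothness of $T_\alpha^{-1}$ by passing plots of $\psi^*E$ through $\Gamma^{-1}$. In (2)$\Rightarrow$(1) you use constant plots to see that the fibers of $\pi$ are free $G$-orbits and that $\pi$ is $G$-invariant. You then rebuild $\Gamma^{-1}$ locally from $T_\alpha^{-1}$ as $(\xi_s,\eta_s)\mapsto(\xi_s,a_s^{-1}b_s)$.

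One thing worth stating explicitly. In (2)$\Rightarrow$(1) you use that $\pi$ is smooth, so that $\pi\circ\eta$ and $\pi\circ\xi$ are plots of $X$. You also use that the action is smooth, so that $\Gamma$ is smooth. Both come from the standing setup of the definition of a principal bundle, not from condition (2). Smoothness of $\pi$ genuinely cannot be recovered from (2). Take $G$ trivial, $E=\R$ with its standard diffeology, $X=\R$ with the discrete diffeology, and $\pi$ the identity map: this satisfies (2) but not (1).
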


The proof of Proposition \ref{claimEquivariantTrivializations} can be found in \cite{PIZ_Diffeology}, although note that Iglesias-Zemmour takes the action to be on the \emph{left} instead of the right.

\begin{lemma}
    \label{claimCoveringPlotsPrincipal}
    Let $\pi : E \to X$ be a diffeological principal $G$-bundle. If $\Psi : U \to E$ and $\Phi : U \to E$ are plots for $E$ such that $\pi \circ \Psi = \pi \circ \Phi$, then there exists a plot $U \to G$, $r \mapsto g_r$ such that $\Psi(r) = \Phi(r)g_r$.
\end{lemma}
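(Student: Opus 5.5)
The plan is to use condition (i) of Definition \ref{defDiffeologicalPrincipalBundle}, namely that $\Gamma : E \times G \to E \times E$, $(p,g) \mapsto (p, pg)$, is a diffeomorphism onto its image. Its inverse, restricted to that image, gives the smooth ``division map'' $\Gamma\inverse$, and the desired $g_r$ is the second component of $\Gamma\inverse$ applied to the pair $(\Phi(r), \Psi(r))$.

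\textbf{Step 1: existence and uniqueness of $g_r$.} Fix $r \in U$. Since $\pi(\Psi(r)) = \pi(\Phi(r))$, the commuting triangle of condition (ii) gives
\[
[\Psi(r)] = \Phi_X(\pi(\Psi(r))) = \Phi_X(\pi(\Phi(r))) = [\Phi(r)]
\]
in $E/G$, where $\Phi_X : X \to E/G$ denotes the diffeomorphism of (ii). (I rename it to avoid a clash with the plot $\Phi$.) So $\Psi(r)$ and $\Phi(r)$ lie in the same $G$-orbit, and there is some $g_r \in G$ with $\Psi(r) = \Phi(r) g_r$. This $g_r$ is unique because the action is free, which was noted after the definition as a consequence of (i).

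\textbf{Step 2: smoothness.} The map $r \mapsto (\Phi(r), \Psi(r))$ is a plot of $E \times E$ by the product diffeology. By Step 1 it takes values in $\Gamma(E \times G)$, so it is a plot of the image with its subspace diffeology. Since $\Gamma$ is a diffeomorphism onto this image, $\Gamma\inverse : \Gamma(E\times G) \to E \times G$ is smooth. Hence
\[
r \mapsto \Gamma\inverse(\Phi(r), \Psi(r)) = (\Phi(r), g_r)
\]
is a plot of $E \times G$. Composing with the projection $\pr_2$, which is smooth by the product diffeology, shows that $r \mapsto g_r$ is a plot for $G$. This gives $\Psi(r) = \Phi(r) g_r$, as required.

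The only step needing care is Step 2, specifically that "diffeomorphism onto its image" refers to the subspace diffeology on the image. I would state this convention explicitly. Everything else is formal.
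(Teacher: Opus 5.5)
Your proof is correct, but it follows a different route from the paper's.

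\textbf{The paper's route.} It argues locally via Proposition~\ref{claimEquivariantTrivializations}. It sets $\psi = \pi\circ\Psi = \pi\circ\Phi$ and takes equivariant plot trivializations $T_\Psi, T_\Phi : U\times G \to \psi^*E$, shrinking $U$ so they share a domain. It then reads off $(r,g_r) = (T_\Phi^{-1}\circ T_\Psi)(r,e)$, and the locality axiom shows that $r\mapsto g_r$ is a plot.

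\textbf{Your route.} You use condition (i) of Definition~\ref{defDiffeologicalPrincipalBundle} directly: $\Gamma^{-1}$ on the image is a smooth division map, so $g_r = \mathrm{pr}_2\,\Gamma^{-1}(\Phi(r),\Psi(r))$ is a plot on all of $U$. Existence of $g_r$ comes from condition (ii), and uniqueness from injectivity of $\Gamma$. No shrinking of $U$ and no gluing by locality are needed.

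\textbf{What each approach buys.} Your argument is more self-contained. Proposition~\ref{claimEquivariantTrivializations} only supplies trivializations $(r,g)\mapsto(r,\Psi_\alpha(r)g)$ for \emph{some} local lifts $\Psi_\alpha$. The paper tacitly builds trivializations from the \emph{given} plots $\Psi$ and $\Phi$. Knowing those are diffeomorphisms onto $\mathrm{pr}_1^{-1}(U)$ requires smoothness of $(r,\xi)\mapsto g$ with $\xi=\Psi(r)g$, which is essentially the division map you invoke. Alternatively, one can fix a single $T_\alpha$, write $\Psi=\Psi_\alpha a$ and $\Phi=\Psi_\alpha b$ with $a,b$ smooth, and set $g=b^{-1}a$. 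The paper's version has the merit of working from the local-trivialization characterization (2), so it would apply if that were taken as the definition. Yours is the shortest argument from the definition as actually stated. Your convention that the image of $\Gamma$ carries the subspace diffeology is the standard meaning of ``diffeomorphism onto its image,'' and it is exactly what Step 2 needs.
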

\begin{proof}
    We at least have a map $U \to G$, $r \mapsto g_r$ such that $\Psi(r) = \Phi(r) g_r$; it remains to show $r \mapsto g_r$ is a plot for $G$. First, setting 
    \[
    \psi := \pi \circ \Psi = \pi \circ \Phi 
    \]
    defines a plot $\psi : U \to X$ for the base space $X$, which admits (by Proposition \ref{claimEquivariantTrivializations}) plot trivializations $T_\Psi, T_\Phi : U \times G \to \psi^*E$ (by possibly shrinking $U$, we may assume $T_\Psi$ and $T_\Phi$ share the same domain). Then,
    \[
    r \mapsto (r, g_r) = (T_\Phi\inverse)(r, \Phi(r) g_r) = T_\Phi\inverse(r, \Psi(r)) = (T_\Phi\inverse \circ T_\Psi)(r, e)
    \]
    is smooth, which means $r \mapsto g_r$ is locally a plot for $G$.
    
\end{proof}

Let $\pi : E \to X$ be a diffeological principal $G$-bundle and $f : G \to H$ a smooth homomorphism into a diffeological group $H$. Then there is a smooth right action by $G$ on $E \times H$ given by
\[
(p, h)g = (pg, f(g)\inverse h),
\]
where the first component is the action of $G$ on $E$ and the second component is the group multiplication in $H$. Denote by $E \times_f H$ the quotient of $E \times H$ by this action, i.e. elements of $E \times_f H$ are equivalence classes denoted by
\[
\llbracket p, h\rrbracket_f = \{ (pg, f(g)\inverse h) : g \in G \}.
\]
When the group homomorphism $f : G \to H$ is clear, we may drop the subscript $f$ and write $\llbracket p, h \rrbracket$. Being a quotient of a product of diffeological spaces, $E \times_f H$ is naturally a diffeological space. 

\begin{definition}
    \label{defAssociatedBundle}
    Let $\pi : E \to X$ be a diffeological principal $G$-bundle and $f : G \to H$ a smooth homomorphism into a diffeological group $H$. The \textbf{associated bundle} of $\pi : E \to X$ and $f : G \to H$ is the well-defined map $\pi_f : E \times_f H \to X$ given by 
    \[
    \pi_f(\llbracket p, h\rrbracket) = \pi(p).
    \]
\end{definition}

\begin{proposition}
    \label{claimAssociatedBundleIsPrincipalBundle}
    Let $\pi : E \to X$ be a diffeological principal $G$-bundle and $f : G \to H$ a smooth homomorphism into a diffeological group $H$. Then the associated bundle $\pi_f : E \times_f H \to X$ is a diffelogical principal $H$-bundle.
\end{proposition}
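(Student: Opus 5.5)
We verify condition (2) of Proposition \ref{claimEquivariantTrivializations} for $\pi_f$. First, the preliminaries. The map $\pi_f$ is well defined, since $\pi(pg)=\pi(p)$. It is smooth, because a plot of $E\times_f H$ locally lifts to a plot $r\mapsto(p_r,h_r)$ of $E\times H$, and then $\pi_f$ applied to it is $r\mapsto \pi(p_r)$. It is surjective because $\pi$ is. The right action of $H$ is $\llbracket p,h\rrbracket\cdot k=\llbracket p,hk\rrbracket$. This commutes with the $G$-action on $E\times H$, since that action multiplies $h$ on the left; hence it is well defined. It is smooth, again by working with local lifts of plots through the quotient.

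Now fix a plot $\psi:U\to X$. By Proposition \ref{claimEquivariantTrivializations} applied to $\pi$, there are an open cover $U_\alpha$ and plots $\Psi_\alpha:U_\alpha\to E$ with $\pi\circ\Psi_\alpha=\psi\upharpoonright U_\alpha$. Define
\[
\Psi^f_\alpha(r)=\llbracket \Psi_\alpha(r),e_H\rrbracket ,
\]
which is a plot of $E\times_f H$ lying over $\psi$. Set $T^f_\alpha(r,h)=(r,\Psi^f_\alpha(r)h)=(r,\llbracket\Psi_\alpha(r),h\rrbracket)$, a map $U_\alpha\times H\to \psi^*(E\times_f H)$. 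It is smooth and has the required equivariant form.

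Bijectivity. For surjectivity, take $(r,\llbracket p,h\rrbracket)$ with $\pi(p)=\psi(r)$. Write $p=\Psi_\alpha(r)g$; then $\llbracket p,h\rrbracket=\llbracket \Psi_\alpha(r),f(g)h\rrbracket$. For injectivity, if $\llbracket\Psi_\alpha(r),h\rrbracket=\llbracket\Psi_\alpha(r),h'\rrbracket$, then $\Psi_\alpha(r)=\Psi_\alpha(r)g$ and $h'=f(g)^{-1}h$. Freeness of the $G$-action gives $g=e$, so $h=h'$.

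The main obstacle is smoothness of the inverse. Let $r\mapsto(r,\llbracket p_r,h_r\rrbracket)$ be a plot of $\psi^*(E\times_f H)$ defined on $V\subseteq U_\alpha$. By the quotient diffeology, after shrinking $V$ we may assume $r\mapsto(p_r,h_r)$ is a plot of $E\times H$ with $\pi(p_r)=\psi(r)$. Lemma \ref{claimCoveringPlotsPrincipal}, applied to the plots $r\mapsto p_r$ and $\Psi_\alpha$ over $V$, gives a plot $r\mapsto g_r$ of $G$ with $p_r=\Psi_\alpha(r)g_r$. The inverse then sends our plot to $r\mapsto(r,f(g_r)h_r)$, which is smooth since $f$ and the multiplication of $H$ are smooth. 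So $T^f_\alpha$ is a diffeomorphism, condition (2) holds, and $\pi_f$ is a diffeological principal $H$-bundle.

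One point of care: plots of the pullback bundle are pairs $(r,\xi)$ where $\xi$ is a plot of $E\times_f H$. The local-lift argument has to be carried out uniformly for plots parametrized by an arbitrary domain $W\to U_\alpha\times(E\times_f H)$, not only for plots of the form $r\mapsto(r,\cdot)$. The same reasoning applies verbatim, using Lemma \ref{claimCoveringPlotsPrincipal} with the base plot $\psi\circ(\text{first component})$.
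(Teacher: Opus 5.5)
Your proof is correct. You build the $H$-equivariant plot trivializations $T^f_\alpha(r,h)=(r,\llbracket\Psi_\alpha(r),h\rrbracket)$ from the local lifts $\Psi_\alpha$ of $E$, use Lemma \ref{claimCoveringPlotsPrincipal} for smoothness of the inverse (correctly for arbitrary plots of the pullback), and conclude via Proposition \ref{claimEquivariantTrivializations}; the paper gives no proof of its own, only deferring to the analogous associated-bundle construction in Iglesias-Zemmour's book, and your argument is a self-contained version of that standard local-lift argument.
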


A similar construction is presented in the book \cite{PIZ_Diffeology}. The proof of Proposition \ref{claimAssociatedBundleIsPrincipalBundle} follows virtually the same argument.

\subsection{Diffeological Connections and Holonomy Representations}
\label{subsec:DiffConnectionsAndHolonomy}
One central object of study in principal bundles is the \textit{connection}. A notion of a \emph{diffeological connection} for diffeological principal $G$-bundles has been presented in the book \cite{PIZ_Diffeology}. It is interesting to note that this definition seems well-fitted for the constructions in this paper, in the sense that the \emph{universal connection}---as we will see later (Definition \ref{defUniversalConnection})---defines a diffeological connection (Proposition \ref{claimUniversalConnectionIsDiffeologicalConnection}). 

Let us first establish some notation. For a diffeological space $(Y, \mathcal D)$, denote by $\mathcal PY$ the subset of $\mathcal D$ consisting of plots $\alpha : I \to Y$, where $I \se \R$ is an open interval (possibly all of $\R$). Also denote by $\hat{\mathcal P}Y$ to be the following set:
\[
\hat{\mathcal P}Y = \{ (\alpha, t) \in \mathcal P Y \times \R : t \in \dom(\alpha) \}
\]
We endow both $\mathcal PY \se \mathcal D$ and $\hat{\mathcal P}Y \se \mathcal PY \times \R$ with the subspace diffeologies, where $\mathcal D$ has the functional diffeology of a diffeology (Definition \ref{defFunctionalDiffeologyOfDiffeology}).

\begin{definition}
    \label{defPaths}
    Let $Y$ be a diffeological space.
    \begin{enumerate}[(1)]
        \item The diffeological space $\mathcal PY$ is called the \textbf{path space} of $Y$. Members $\alpha$ of $\mathcal PY$ are called \textbf{smooth paths}.
        \item The diffeological space $\hat{\mathcal P}Y$ is called the \textbf{initialized path space} of $Y$. Members $(\alpha, t)$ of $\hat{\mathcal P}Y$ are called \textbf{initialized smooth paths}.
        \item A \textbf{smooth loop} is an element $\gamma \in \mathcal P Y$ such that $0,1 \in \dom(\gamma)$ and $\gamma(0) = \gamma(1)$. If $y_0 = \gamma(0) = \gamma(1)$, we say that $\gamma$ is a \textbf{smooth loop based at $y_0$.}
        \item The diffeological subspace of $\mathcal P Y$ consisting of smooth loops based at $y_0$ is denoted by $\mathcal L(Y, y_0)$.
    \end{enumerate}
\end{definition}

\begin{definition}
    \label{defConnection}
    Let $\pi : E \to X$ be a diffeological principal $G$-bundle. By a \textbf{(diffeological) connection} on $\pi : E \to X$, we mean a smooth map 
    \[
    A : \hatP E \to \mathcal P E, \quad (\tilde \alpha, t) \mapsto A_t\tilde \alpha
    \]
    satisfying the following conditions:
    \begin{enumerate}[(i)]
        \item \emph{Domain}: $\dom(A_t\tilde \alpha) = \dom(\tilde \alpha).$
        \item \emph{Lifting}: $\pi \circ A_t\tilde \alpha = \pi \circ \tilde \alpha$.
        \item \emph{Basepoint}: $A_t\tilde \alpha(t) = \tilde \alpha(t).$
        \item \emph{$G$-equivariance}: $A_t(\tilde \alpha \cdot \rho) = A_t\tilde \alpha \cdot \rho(t)$ for any smooth path $\rho : \dom(\tilde \alpha) \to G$. Here $\tilde \alpha \cdot \rho$ is the path $t \mapsto \tilde \alpha(t)\cdot \rho(t)$.
        \item \emph{Reparameterization}: $A_t(\tilde \alpha \circ f) = A_{f(t)}\tilde \alpha \circ f$ for $f \in \mathcal P(\dom(\tilde \alpha))$.
        \item \emph{Projection}: $A_t(A_t\tilde \alpha) = A_t\tilde \alpha$.
    \end{enumerate}
    We call the path $A_t\tilde \alpha$ the \textbf{horizontal projection of $\tilde \alpha$ at time $t$}.
\end{definition}

\begin{definition}
    \label{defHorizontalPath}
    For a diffeological connection $A$ on a diffeological $G$-bundle $\pi : E \to X$, we say a path $\tilde \alpha \in \mathcal P E$ is \textbf{$A$-horizontal} if $A_t \tilde \alpha = \tilde \alpha$ for some $t \in \dom(\alpha)$. We may also say $\alpha$ is simply \textbf{horizontal} if the connection is clear.
\end{definition}

Traditionally, connections may be defined as a certain \emph{horizontal lifting function} which satisfies a variety of conditions. It can be shown (see \cite{PIZ_Diffeology}) that a diffeological connection $A$ gives rise to a smooth mapping $\hor_A : \ev_X^*(E) \to \mathcal P E$, where
\[
\ev_X^*(E) = \{ (\alpha, t, \xi) \in \hatP X \times E : \pi(\xi) = \alpha(t) \}
\]
with the subspace diffeology from $\hatP X \times E$, such that $\hor_A(\alpha, t, \xi)$ is $A$-horizontal. The space $\ev_X^*(E)$ is the pullback of $\pi : E \to X$ along the map $\ev_X : \hatP X \to X$ given by $\ev_X(\alpha, t) = \alpha(t)$.

\begin{definition}
    \label{defHorizontalLift}
    Let $A$ be a diffeological connection on $\pi : E \to X$ and let $(\alpha, t, \xi) \in \ev_X^*(E)$. The \textbf{horizontal lift of $\alpha$ at $\xi$} is the path $\hor_A(\alpha, t, \xi) \in \mathcal P E$.
\end{definition}

Later, we will consider paths which are the concatenation of other paths. The formal treatment of path concatenation is delayed to Section \ref{secPathConcatenationAndLoopSpace}, but we provide a preliminary definition:

\begin{definition}
    \label{defConcatentatedPath}
    Let $Y$ be a diffeological space. Given $\alpha, \beta \in \mathcal P Y$ such that $1 \in \dom(\alpha)$, $0 \in \dom(\beta)$, and $\beta(0) = \alpha(1)$, suppose there exists a smooth path $\beta \vee \alpha \in \mathcal P Y$ defined by
    \[
    (\beta \vee \alpha)(t) = \begin{dcases}
    \alpha(2t) & \text{if } t < \frac{1}{2}. \\
    \beta(2t - 1) & \text{if } t \geq \frac{1}{2}.
    \end{dcases}
    \]
    If such a $\beta \vee \alpha$ exists, it is called the \textbf{concatenation of $\alpha$ and $\beta$} and is read ``$\beta$ after $\alpha$.''
\end{definition}

\begin{proposition}
    \label{propConnectionProperties}
    Let $A : \hatP E \to \mathcal PE$ be a connection on a diffeological principal $G$-bundle $\pi : E \to X$. The following properties hold for the horizontal lifting function $\hor_A : \ev^*(E) \to \mathcal P E$:
    \begin{enumerate}
        \item $G$-equivariance: $\hor_A(\alpha, t, \xi g) = \hor_A(\alpha, t, \xi)g$ for all $g \in G$.
        \item Reparameterization: $\hor_A(\alpha \circ f, s, \xi) = \hor_A(\alpha, f(s), \xi) \circ f$.
        \item Concatenation: Let $\beta \vee \alpha \in \mathcal P X$. Then:
        \[
        \hor_A(\beta \vee \alpha, 0, \xi) = \hor_A(\beta, 0, \xi') \vee \hor_A(\alpha, 0, \xi),
        \]
        where $\xi' = \hor_A(\alpha, 0, \xi)(1)$.
    \end{enumerate}
\end{proposition}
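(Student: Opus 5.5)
The plan is to use one property of the horizontal lifting function: a horizontal path is determined by the point it passes through at a single time. Concretely, suppose $\tilde\beta,\tilde\beta'\in\mathcal PE$ have the same domain and the same projection $\pi\circ\tilde\beta=\pi\circ\tilde\beta'$. If both are $A$-horizontal and $\tilde\beta(s)=\tilde\beta'(s)$ for some $s$, then $\tilde\beta=\tilde\beta'$. I would take this from the construction of $\hor_A$ in \cite{PIZ_Diffeology}, where $\hor_A(\alpha,t,\xi)=A_t\tilde\alpha$ for any lift $\tilde\alpha$ of $\alpha$ with $\tilde\alpha(t)=\xi$.

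To prove the uniqueness property directly, I would apply Lemma \ref{claimCoveringPlotsPrincipal} to the one-parameter plots $\tilde\beta,\tilde\beta'$. This gives a smooth path $\rho$ in $G$ with $\tilde\beta'=\tilde\beta\cdot\rho$. Then
\[
\tilde\beta'=A_s\tilde\beta'=A_s(\tilde\beta\cdot\rho)=A_s\tilde\beta\cdot\rho(s)=\tilde\beta\cdot\rho(s),
\]
using the basepoint and projection axioms (horizontality at $s$ follows from horizontality at some $t$, via $A_s\tilde\beta$ agreeing with $\tilde\beta$). Evaluating at $s$ gives $\rho(s)=e$.

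\textbf{Equivariance (1).} The path $\hor_A(\alpha,t,\xi)g$ projects to $\alpha$. It passes through $\xi g$ at time $t$. It is horizontal by $G$-equivariance of $A$ with the constant path $\rho\equiv g$, since $A_t(\tilde\beta g)=A_t\tilde\beta\, g=\tilde\beta g$. Uniqueness then identifies it with $\hor_A(\alpha,t,\xi g)$.

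\textbf{Reparameterization (2).} Put $\tilde\beta=\hor_A(\alpha,f(s),\xi)$. The path $\tilde\beta\circ f$ projects to $\alpha\circ f$ and takes the value $\xi$ at $s$. By the reparameterization axiom, $A_s(\tilde\beta\circ f)=A_{f(s)}\tilde\beta\circ f=\tilde\beta\circ f$, so it is horizontal. Uniqueness gives (2).

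\textbf{Concatenation (3).} Set $\tilde\alpha=\hor_A(\alpha,0,\xi)$, $\tilde\beta=\hor_A(\beta,0,\xi')$ and $\gamma=\beta\vee\alpha$. The main obstacle is smoothness: one must show that $\tilde\beta\vee\tilde\alpha$ is actually a smooth path and is horizontal. Definition \ref{defConcatentatedPath} only presupposes that $\beta\vee\alpha$ is smooth, and that says nothing a priori about the lifts. My approach is to avoid gluing lifts directly and instead compare with $\tilde\gamma=\hor_A(\gamma,0,\xi)$, which is smooth by construction.

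The comparison goes as follows. On the open interval around $[0,\tfrac12)$ where $\gamma(t)=\alpha(2t)$, part (2) with $f(t)=2t$, combined with uniqueness and locality of horizontality, shows $\tilde\gamma(t)=\tilde\alpha(2t)$. Restricting a horizontal path to a subinterval keeps it horizontal, by reparameterization along the inclusion. Continuity of $\tilde\gamma$ at $\tfrac12$, or the fact that $\tilde\gamma$ is horizontal through $\tilde\alpha(1)=\xi'$, then gives $\tilde\gamma(\tfrac12)=\xi'$. Applying the same argument on $[\tfrac12,1]$ with $f(t)=2t-1$ yields $\tilde\gamma(t)=\tilde\beta(2t-1)$. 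Hence $\tilde\gamma=\tilde\beta\vee\tilde\alpha$, which proves both the smoothness and the stated identity.

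The delicate points are these. First, the uniqueness argument must be run on subintervals, since $\gamma$ agrees with the reparameterized $\alpha$ only on part of its domain. Second, one must verify that $\tilde\gamma(\tfrac12)=\xi'$. For this I would use the limit along $t\uparrow\tfrac12$, or else a reparameterization $f$ that is defined on an open interval containing $\tfrac12$ and matches $\gamma$ there.
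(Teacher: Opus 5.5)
The paper states this proposition without proof; it is imported from \cite{PIZ_Diffeology}. So your argument has to run on Definition~\ref{defConnection} as written, and it breaks at the parenthetical claim that horizontality at one time gives horizontality at every time.

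\textbf{Time-independence of horizontality is not available.} Your justification is circular. Getting $A_s\tilde\beta=\tilde\beta$ from $A_t\tilde\beta=\tilde\beta$ needs an axiom relating $A_s$ to $A_t$ for $s\neq t$, and (i)--(vi) contain none, since (vi) only says $A_tA_t=A_t$. The claim is in fact false. On the trivial bundle $\R\times\R\to\R$ with $G=(\R,+)$, set $A_t(\alpha,g)(u)=\bigl(\alpha(u),\,g(t)+(\alpha(u)-\alpha(t))^3\bigr)$. This map is smooth and satisfies (i)--(vi); reparameterization is just substitution. Yet $A_sA_t\tilde\alpha\neq A_t\tilde\alpha$ in general.

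Your parts (1) and (2) survive, because there the point of agreement is the same time at which both paths are horizontal. Both also follow directly from $\hor_A(\alpha,t,\xi)=A_t\tilde\alpha$ together with (iv) and (v). Part (3) does not survive. On the second half you need $\tilde\gamma=\hor_A(\gamma,0,\xi)$, restricted to an interval not containing $0$, to be horizontal, which is exactly the missing statement. In the example (3) is actually false: take $\alpha(s)=s$, $\beta(s)=1+s$, $\xi=(0,0)$. Then $\hor_A(\beta\vee\alpha,0,\xi)(1)=(2,8)$, whereas $\hor_A(\beta,0,\xi')(1)=(2,2)$. So the concatenation property needs the strengthened projection axiom $A_s(A_t\tilde\alpha)=A_t\tilde\alpha$ for all $s,t\in\dom(\tilde\alpha)$, and you must state it as a hypothesis. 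With it, your uniqueness lemma is correct.

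\textbf{The matching at $\tfrac12$ is also unjustified}, even granting that axiom. Continuity does not help, because D-topologies need not be Hausdorff. For example, let $G=(\R,+)$ carry the group diffeology generated by the indicator $\chi$ of $[0,\infty)$. Then the smooth paths $t\mapsto(t,0)$ and $t\mapsto(t,\chi(t-\tfrac12))$ in $\R\times G$ agree for $t<\tfrac12$ but not at $\tfrac12$. Your alternative, an open interval around $\tfrac12$ on which $\gamma$ is a reparameterization of $\alpha$, need not exist if $\beta$ immediately leaves the image of $\alpha$. The same problem recurs on the second half, whose open interval $(\tfrac12,\dots)$ misses the matching point.

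\textbf{Fix via folding reparameterizations and (2).}
\begin{itemize}
\item First half: let $\phi(t)=\tfrac12-2(t-\tfrac12)^2$. Then $\phi\le\tfrac12$, $\phi(0)=0$ and $\phi(\tfrac12)=\tfrac12$. Hence $\gamma\circ\phi=\alpha\circ 2\phi$ on an open interval, and (2) applied to both sides gives $\tilde\gamma\circ\phi=\tilde\alpha\circ 2\phi$. This yields $\tilde\gamma(t)=\tilde\alpha(2t)$ on all of $[0,\tfrac12]$, in particular $\tilde\gamma(\tfrac12)=\xi'$.
\item Second half: let $\psi(t)=\tfrac12+2(t-\tfrac12)^2$, which maps $[\tfrac12,1]$ onto itself with $\gamma\circ\psi=\beta\circ(2\psi-1)$. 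Then (2) gives $\hor_A(\gamma,\tfrac12,\xi')\circ\psi=\tilde\beta\circ(2\psi-1)$. The strengthened axiom identifies $\hor_A(\gamma,\tfrac12,\xi')$ with $\tilde\gamma$.
\end{itemize}
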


Suppose we have a pointed map $\pi : (E, \xi_0) \to (X, x_0)$ which is a diffeological principal $G$-bundle. Fix a diffeological connection $A : \hatP E \to \mathcal P E$. Given a loop $\gamma \in \mathcal L(X, x_0)$, it may be that its horizontal lift $\hor_A(\gamma, 0, \xi_0)$ is \emph{not} a loop in $E$, although $\hor_A(\gamma, 0, \xi_0)(t)$ is in the fiber $\pi\inverse(x_0)$ at the end-points $t = 0$ and $t = 1$. Thus there is a possibly non-identity element $g_{\gamma, p}^A \in G$ for which $\hor_A(\gamma, 0, \xi_0)(1) = \xi_0\cdot g_{\gamma, \xi_0}^A.$
\begin{definition}
    \label{defHolonomyRepresentation}
    Let $\pi : (E, \xi_0) \to (X, x_0)$ be a diffeological principal $G$-bundle. A diffeological connection $A : \hatP E \to \mathcal P E$ gives rise to a smooth mapping
    \[
    H^A_{\xi_0} : \mathcal L(X, x_0) \to G, \quad H^A_{\xi_0}(\gamma) = g^A_{\gamma, \xi_0},
    \]
    where the $g^A_{\gamma, \xi_0}$ is as above, which is called the \textbf{holonomy representation} of $A$ at the point $\xi_0$.
\end{definition}

Note that (by the concatenation property in Proposition \ref{propConnectionProperties}) if $\gamma \vee \sigma \in \mathcal L(X, x_0)$, then $H^A_{\xi_0}(\gamma \vee \sigma) = H^A_{\xi_1}(\gamma) \cdot H^A_{\xi_0}(\sigma)$, where $\xi_1 = \hor_A(\sigma, 0, \xi_0)(1)$.

\begin{definition}
    \label{defHolonomy}
    Let $\pi : (E, \xi_0) \to (X, x_0)$ be a diffeological principal $G$-bundle and fix a diffeological connection $A$. The \textbf{holonomy group of $A$ based at $\xi_0$} is the subgroup $\Hol_{\xi_0}(A) \leq G$ defined by
    \[
    \Hol_{\xi_0}(A) = \left\{g_{\gamma, \xi_0}^A \in G : \gamma \text{ is a loop based at } x_0 \right\} = H^A_{\xi_0}\big(\mathcal L(X, x_0)\big).
    \]
\end{definition}
\section{Diffeological Path Spaces and Singer's Universal Connection}
\label{secPathSpacesAndUniversalConnection}
Throughout, we fix a pointed diffeological space $(X, \bullet)$ which is path-connected, i.e. for every $x, y \in X$, there exists an $\alpha \in \mathcal PX$ for which $x, y \in \image(\alpha)$. Note this fact implies there exists a smooth map $p : [0,1] \to X$ with $p(0) = x$ and $p(1) = y$. The goal of this section is to provide a complete and rigorous framework for Singer's arguments in \cite{Singer} and to generalize the notions to the diffeological setting.

\subsection{Path Concatenation and the Loop Space}
\label{secPathConcatenationAndLoopSpace}

We equip the set $\smooth([0,1], X)$ with the functional diffeology. We take two quotients of $\smooth([0,1], X)$ by equivalence relations which will lead to the so-called \emph{retrace equivalence}, which is an equivalence relation used in \cite{Kobayashi} and \cite{Singer}.

First, put $\smooth_{\sim}([0,1], X) = \smooth([0,1], X)/\!\sim$, where $\alpha \sim \beta$ if and only if there exists $f \in \Diff([0,1])$ with $f(0) = 0$, $f(1) = 1$, and $\alpha = \beta \circ f$. An element of $\smooth_\sim(X)$ is written $\ulcorner \alpha\urcorner$, which may be thought of as an ``unparameterized path.''

\begin{lemma}
    \label{lemConstantNearEndPoint}
    Let $\ulcorner \alpha \urcorner \in \smooth_{\sim}([0,1], X)$. Then the representative $\alpha \in \smooth([0,1], X)$ can be chosen to be constant near its endpoints $0$ and $1$.
\end{lemma}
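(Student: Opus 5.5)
Given a representative $\alpha \in \smooth([0,1], X)$ of the class $\ulcorner \alpha \urcorner$, we will produce an equivalent representative $\alpha \circ f$ with $f \in \Diff([0,1])$, $f(0)=0$, $f(1)=1$, such that $\alpha\circ f$ is constant near $0$ and $1$. The natural candidate is a reparameterization whose derivatives all vanish at the endpoints. The difficulty is that a genuine diffeomorphism of $[0,1]$ cannot be constant near an endpoint: it would fail to be injective, and its derivative would vanish. So the strategy must be adapted.

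The first point to address is what $\Diff([0,1])$ should mean. We will use the subspace diffeology from $\R$. In that diffeology, a bijection $f$ with $f$ and $f^{-1}$ both smooth must have nonvanishing derivative, so a flattening map $f$ is not itself allowed.

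The plan therefore is to interpret the lemma as saying that the class admits a representative constant near the endpoints, where the relation $\sim$ is understood as generated by reparameterizations. Concretely:
\begin{enumerate}[(1)]
    \item Fix a smooth nondecreasing surjection $\phi : [0,1] \to [0,1]$ that is constant equal to $0$ on $[0,\epsilon]$, constant equal to $1$ on $[1-\epsilon,1]$, and a diffeomorphism on the interior. It can be built in the standard way from $e^{-1/t}$-type bump functions by integrating a smooth nonnegative function supported in $[\epsilon, 1-\epsilon]$ with total integral $1$.
    \item Note that $\alpha \circ \phi$ is smooth by the chain rule axiom (Lemma \ref{lemCompositionIsSmooth}).
    \item Observe that $\alpha\circ\phi$ is constant equal to $\alpha(0)$ near $0$ and to $\alpha(1)$ near $1$.
\end{enumerate}

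The main obstacle is step (4): showing that $\alpha\circ\phi \sim \alpha$. A direct $f$ is impossible, as explained above. We will try the following instead. If the paper intends $\sim$ to be the equivalence relation generated by reparameterizations $\alpha = \beta\circ f$, where $f$ is merely a smooth surjective nondecreasing map fixing the endpoints (the thin-homotopy/retrace flavour used by Kobayashi and Singer), then $\alpha \circ \phi \sim \alpha$ holds immediately, with witness $\phi$.

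Otherwise, we will read the lemma as choosing the representative up to the subsequent retrace equivalence. In that case we will exhibit $\alpha\circ\phi$ and $\alpha$ as related through the intermediate path $\alpha\circ\phi$ on a slightly enlarged domain. Alternatively, we will note that the later quotient identifies them, and check that every later construction (concatenation in Definition \ref{defConcatentatedPath}) only needs some representative that is flat at the endpoints.

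Either way, the key analytic input is the existence of the flattening map $\phi$, together with the smoothness of $\alpha\circ\phi$. The real work is confirming which generating relation makes $\phi$ an admissible reparameterization.
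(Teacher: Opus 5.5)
Your diagnosis is right, and it is sharper than the paper's own argument. The paper takes $f(t)=g(t)/(g(t)+g(1-t))$ with $g(t)=e^{-1/t}$ and asserts that $\alpha\circ f$ is constant near $0$ and $1$. This fails on both counts you anticipate.

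First, all derivatives of $f$ vanish at $0$ and $1$, so $f^{-1}$ is not differentiable there and $f\notin\Diff([0,1])$. Second, $f$ is strictly increasing with $f(t)>0$ for $t>0$, so $\alpha\circ f$ is merely flat at the endpoints, not constant there.

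Your $\phi$, obtained by integrating a bump supported in $[\epsilon,1-\epsilon]$, is the right kind of map. Constancy, not flatness, is what the proof of Lemma \ref{lemPathConcatenationIsSmooth} actually needs. For example, take the wedge of two lines, i.e. the quotient of $\R\sqcup\R$ identifying the two origins. A path entering the wedge point flatly along one branch and leaving flatly along the other cannot lift locally to a single summand near the junction, so it is not a plot.

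More to the point, with $\sim$ as literally defined the lemma is false. For $X=\R$ and $\alpha=\mathrm{id}_{[0,1]}$, every $\beta\sim\alpha$ is itself a diffeomorphism of $[0,1]$, hence constant on no interval. So no argument, yours or the paper's, can complete your step (4) under that definition.

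What your proposal is missing is therefore a commitment rather than an idea: state that the lemma fails as written and adopt a corrected version. Branch (a) is a legitimate correction. Take $\sim$ to be the equivalence relation generated by $\beta\sim\beta\circ\phi$ for smooth nondecreasing surjections $\phi$ of $[0,1]$. Then your steps (1)--(3), with witness $\phi$, form a complete proof. But this changes the definition of $\smooth_\sim([0,1],X)$, and you should present it that way, not as a reading of the given relation.

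Branch (b) should be dropped, for two reasons.
\begin{itemize}
\item Appealing to retrace equivalence here is circular. The retrace moves are built from concatenations, whose smoothness is only established for representatives constant near the endpoints, that is, after this lemma.
\item Even granting the moves, they cannot help. Diffeomorphic reparameterizations fixing $0$ cannot change whether a path is constant on some $[0,\delta)$. Neither can inserting or deleting $\eta\vee\eta^{\leftarrow}$, because the germ at $0$ of $\beta\vee\alpha'$ is a linear rescaling of the germ of $\alpha'$ both before and after such a move.
\end{itemize}
Hence the counterexample above survives in $\mathcal F(X)$, and the Convention following Definition \ref{defRetraceEquivalence} inherits the same problem.
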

\begin{proof}
    Take, for example, the smooth bump function $f : [0,1] \to [0,1]$  given by
    \[
    f(t) = \frac{g(t)}{g(t) + g(1 - t)},
    \]
    where $g(t) = e^{-1/t}$ if $t > 0$ and $g(t) = 0$ otherwise. Then $\alpha \circ f$ is constant near $0$ and $1$.

\end{proof}

In particular, Lemma \ref{lemConstantNearEndPoint} implies that two unparameterized paths $\ulcorner \alpha \urcorner$ and $\ulcorner \beta \urcorner $ with $\beta(0) = \alpha(1)$ for some (hence all) representatives $\beta$ and $\alpha$ can be concatenated into another unparameterized path $\ulcorner\beta \vee \alpha\urcorner$, as in Definition \ref{defConcatentatedPath}. This means the following is well-defined:
\begin{definition}[cf. \cite{Meneses_2021, Gibilisko}]
    \label{defRetraceEquivalence}
    For each $\ulcorner \eta\urcorner \in \smooth_\sim([0,1], X)$, there is an operation on $\smooth_\sim([0,1], X)$ given by 
    \[
    \ulcorner \beta \urcorner \vee \ulcorner \alpha \urcorner \mapsto \ulcorner \beta\urcorner \vee \ulcorner \eta \urcorner \vee \ulcorner\eta^{\leftarrow}\urcorner \vee \ulcorner\alpha\urcorner,
    \]
    where $\eta^{\leftarrow}(t) = \eta(1 - t)$, which generates an equivalence relation on $\smooth_\sim([0,1], X)$ called \textbf{retrace equivalence}. Denote by $\mathcal F(X)$ the quotient of $\smooth_\sim([0,1], X)$ by retrace equivalence. A class of retrace-equivalent paths is denoted by $[\alpha]$, viewing $\alpha \in \smooth([0,1], X)$.
\end{definition}

Note that $[\alpha] = [\beta]$ if and only if $\alpha$ and $\beta$ differ by a finite number of reparameterizations and retraces. In light of Lemma \ref{lemConstantNearEndPoint}, let us now take the following convention:
\begin{convention*}
If $[\alpha] \in \mathcal F(X)$ is given, we assume the representative $\alpha \in \smooth([0,1], X)$ is constant near $0$ and $1$.
\end{convention*}

\begin{lemma}
    \label{lemPathConcatenationIsSmooth}
    Path-concatenation (Definition \ref{defConcatentatedPath}) defines a partial function
    \[
    \mathcal F(X) \times \mathcal F(X) \rightharpoonup \mathcal F(X), \quad ([\alpha], [\beta]) \mapsto [\beta] \vee [\alpha] := [\beta \vee \alpha],
    \] 
    which is smooth where it is defined---more precisely, on the diffeological subspace
    \[
    \mathcal F^\vee(X) = \big\{ ([\alpha], [\beta]) \in \mathcal F(X) \times \mathcal F(X) : \beta(0) = \alpha(1) \big\}.
    \]
\end{lemma}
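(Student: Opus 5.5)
Two things need checking: that the operation is well defined on retrace classes, and that it is smooth with respect to the quotient diffeology on $\mathcal F(X)$ and the subspace diffeology on $\mathcal F^\vee(X)\subseteq \mathcal F(X)\times\mathcal F(X)$.

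\emph{Well-definedness.} Suppose $\alpha\sim\alpha'$ via $\alpha'=\alpha\circ f$ with $f\in\Diff([0,1])$ fixing the endpoints. Then $\beta\vee\alpha'=(\beta\vee\alpha)\circ F$, where $F$ applies $f$ on $[0,\tfrac12]$ (rescaled) and the identity on $[\tfrac12,1]$. Since both representatives are constant near the junction (by the Convention), $F$ may be smoothed there by first composing with the bump function of Lemma \ref{lemConstantNearEndPoint}. A retrace move inserts $\eta\vee\eta^{\leftarrow}$ into a representative of $[\alpha]$. After rebracketing, this is a retrace move on $\beta\vee\alpha$, since concatenation is associative up to reparameterization. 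Smoothness of each concatenated representative at $t=\tfrac12$ again follows from constancy near the endpoints.

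\emph{Smoothness.} Let $\psi:U\to\mathcal F^\vee(X)$ be a plot, $r\mapsto([\alpha_r],[\beta_r])$. By the quotient and product diffeologies, near each $r_0$ there are plots $r\mapsto\alpha_r$ and $r\mapsto\beta_r$ of $\smooth([0,1],X)$ lifting $\psi$. We would like to set $\Phi(r)=\beta_r\vee\alpha_r$ and show that $(r,t)\mapsto\Phi(r)(t)$ is smooth. This uses Lemma \ref{lemCompositionIsSmooth}: $\alpha_r\circ(2\,\cdot)$ and $\beta_r\circ(2\,\cdot-1)$ are smooth families, and they agree with the pieces on each half.

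\textbf{Main obstacle.} Two problems arise with the local lifts. First, the lifted representatives need not satisfy $\beta_r(0)=\alpha_r(1)$ exactly. They do, however, because endpoints are invariant under reparameterization and retrace, so the condition defining $\mathcal F^\vee(X)$ holds for any lift. Second, the lifted families need not be \emph{uniformly} constant near the endpoints, so gluing at $t=\tfrac12$ may fail to be smooth in $(r,t)$.

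To fix the second problem, precompose every $\alpha_r$ and $\beta_r$ with the fixed bump function $f$ of Lemma \ref{lemConstantNearEndPoint}. By Lemma \ref{lemCompositionIsSmooth}, $r\mapsto\alpha_r\circ f$ is still a plot, it represents the same class, and it is constant on fixed neighborhoods $[0,\epsilon)$ and $(1-\epsilon,1]$ independent of $r$. Then $\Phi(r)(t)$ agrees with a smooth function of $(r,t)$ on the two overlapping open sets $t<\tfrac12+\epsilon/2$ and $t>\tfrac12-\epsilon/2$: on the overlap both pieces equal the constant $\alpha_r(1)=\beta_r(0)$, which is smooth in $r$ via evaluation. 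By the locality axiom, $(r,t)\mapsto\Phi(r)(t)$ is smooth, so $\Phi$ is a plot of $\smooth([0,1],X)$. Composing with the quotient map, which is smooth, gives that $r\mapsto[\beta_r\vee\alpha_r]$ locally lifts to a plot. Hence it is a plot of $\mathcal F(X)$, and concatenation is smooth on $\mathcal F^\vee(X)$.
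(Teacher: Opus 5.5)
Your smoothness argument is the paper's own. You lift locally to plots $r\mapsto\alpha_r$, $r\mapsto\beta_r$ of $\smooth([0,1],X)$, precompose with the function $f$ of Lemma~\ref{lemConstantNearEndPoint} (still a plot by Lemma~\ref{lemCompositionIsSmooth}), and glue $\alpha_r(2t)$, $\beta_r(2t-1)$ and the constant $\alpha_r(1)$ by locality. You are more careful than the paper in two places. First, you note that endpoints are class invariants, so any lift has $\alpha_r(1)=\beta_r(0)$. Second, you see that the gluing needs constancy on intervals \emph{independent of $r$}. One fixed precomposition supplies this, and the paper's ``we can assume all the $\alpha_r$ and $\beta_r$ are constant near their endpoints'' leaves it implicit. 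Two small corrections:
\begin{enumerate}
\item Glue over the three open sets $t<\tfrac12$, $t>\tfrac12$, $|t-\tfrac12|<\epsilon/2$. Your two-set version does not work, because $\alpha_r(f(2t))$ is undefined for $t>\tfrac12$.
\item In your well-definedness step, the corner of $F$ at $\tfrac12$ is removed by altering $F$ inside the interval on which $\beta\vee\alpha$ is constant. Composing with a bump function does not do this.
\end{enumerate}

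The step you inherit from the paper, however, fails as written. The function $f(t)=g(t)/(g(t)+g(1-t))$ satisfies $0<f(t)<1$ for $0<t<1$, so it is not constant near $0$ or $1$; it is only flat there. Also $f'(0)=0$, so $f\notin\Diff([0,1])$. More basically, no map is both constant near the endpoints and a diffeomorphism. So ``represents the same class'' and ``constant on fixed neighbourhoods independent of $r$'' cannot both come from a single precomposition.

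Lemma~\ref{lemConstantNearEndPoint} is itself false. For $X=\R$ and $\alpha(t)=t$, every representative of $\ulcorner\alpha\urcorner$ is a diffeomorphism of $[0,1]$, so the Convention cannot be arranged. Flatness is no substitute in diffeology: on two copies of $\R$ glued at $0$, a path arriving flatly along one copy and leaving flatly along the other is not a plot.

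The standard repair has two parts. First, work with stationary paths (constant near $0$ and $1$) throughout. Second, precompose with a fixed smashing function $\lambda$ that is $0$ on $[0,\epsilon]$, $1$ on $[1-\epsilon,1]$, and has $\lambda'>0$ in between. For a stationary $\alpha$, modify $\lambda$ only where it takes values in the end-intervals on which $\alpha$ is constant, without leaving those intervals. This yields $\phi\in\Diff([0,1])$ with $\alpha\circ\phi=\alpha\circ\lambda$. Hence each $\alpha_r\circ\lambda$ still represents $[\alpha_r]$, while the constancy is now uniform in $r$. With this substitution your argument goes through unchanged.
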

\begin{proof}
    Let $r \mapsto ([\alpha_r], [\beta_r])$ be a plot for the subspace $\mathcal F^\vee(X)$. Locally, by definition of the quotient diffeology, there are plots $r \mapsto \alpha_r$ and $r\mapsto \beta_r$ for $\smooth([0, 1], X)$ with $\alpha_r(1) = \beta_r(0)$. We can assume \emph{all} the $\alpha_r$ and $\beta_r$ are constant near their endpoints, for otherwise by smoothness of composition (Lemma \ref{lemCompositionIsSmooth}), we can consider the plots $r \mapsto \alpha_r \circ f$ and $r\mapsto \beta_r \circ f$, where $f$ is the smooth bump function in the proof of Lemma \ref{lemConstantNearEndPoint}. Now, the evaluation
    \[
    (r, t) \mapsto (\beta_r \vee \alpha_r)(t)
    \]
    is locally $(r,t) \mapsto \alpha_r(2t)$, or $(r, t) \mapsto \beta_r(2t - 1)$, or constant, all of which are smooth. This means $r \mapsto \beta_r \vee \alpha_r$ is a plot for $\smooth([0,1], X)$, hence $r \mapsto [\beta_r \vee \alpha_r]$ is a plot for $\mathcal F(X)$. We've just shown that $([\alpha], [\beta]) \mapsto [\beta \vee \alpha]$ locally takes plots for $\mathcal F^\vee(X)$ to plots for $\mathcal F(X)$, hence defines a smooth map.

\end{proof}

\begin{definition}
    \label{defLoopSpace}
    The \textbf{loop space} of $(X, \bullet)$ is the set
    \[
    \Omega_\bullet(X) := \big\{ [\gamma] \in \mathcal F(X) : \gamma(0) = \bullet = \gamma(1) \big\}
    \]
    endowed with the subspace diffeology from $\mathcal F(X)$.
\end{definition}

\begin{proposition}
    \label{claimLoopSpaceDiffeologicalGroup}
    The loop space $\Omega_\bullet(X)$ is a diffeological group whose group multiplication and inversion operations are $[\gamma] \vee [\sigma] := [\gamma \vee \sigma]$ and $[\gamma]\inverse = [\gamma^\leftarrow]$.
\end{proposition}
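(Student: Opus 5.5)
The argument splits into an algebraic part and a smoothness part. For the algebra, I would first check that the operations are well defined on $\Omega_\bullet(X)$. If $[\gamma],[\sigma]\in\Omega_\bullet(X)$, then $\gamma(1)=\bullet=\sigma(0)$, so by Lemma \ref{lemPathConcatenationIsSmooth} the class $[\gamma\vee\sigma]$ is a well-defined element of $\mathcal F(X)$, and it is again a loop at $\bullet$. For inversion, I would show that reparameterizations and retraces are compatible with reversal: $(\beta\circ f)^\leftarrow = \beta^\leftarrow\circ \bar f$ with $\bar f(t)=1-f(1-t)$, and the reversal of $\beta\vee\eta\vee\eta^\leftarrow\vee\alpha$ is $\alpha^\leftarrow\vee\eta\vee\eta^\leftarrow\vee\beta^\leftarrow$, which is again a retrace.

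Next come the group axioms. The identity is the class of the constant loop $c_\bullet$. The key observation is that $\gamma\vee c_\bullet$ is a reparameterization of $\gamma$ when $\gamma$ is constant near its endpoints, as the Convention allows. Concretely, $\gamma\vee c_\bullet=\gamma\circ h$ for a smooth nondecreasing $h$. Strictly, $h$ is not a diffeomorphism, so I would argue instead that $c_\bullet=\eta\vee\eta^\leftarrow$ with $\eta$ constant. Then $\gamma\vee c_\bullet$ is literally obtained from the retrace operation applied to $\gamma = \gamma\vee(\text{const})$, up to diffeomorphic reparameterizations, and these exist because $\gamma$ is constant near the ends. Associativity holds because $(\alpha\vee\beta)\vee\delta$ and $\alpha\vee(\beta\vee\delta)$ differ by a piecewise-linear reparameterization. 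Since all three paths are constant near their endpoints, this can be replaced by a genuine diffeomorphism of $[0,1]$ fixing $0$ and $1$: the two paths agree up to a smooth monotone change of time on each constant stretch. Inverses follow from $\gamma\vee\gamma^\leftarrow = c_\bullet\vee\gamma\vee\gamma^\leftarrow\vee c_\bullet$, which is retrace equivalent to $c_\bullet\vee c_\bullet\sim c_\bullet$.

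For smoothness, multiplication is the restriction of the smooth partial map of Lemma \ref{lemPathConcatenationIsSmooth} to $\Omega_\bullet(X)\times\Omega_\bullet(X)\subseteq\mathcal F^\vee(X)$, so it is smooth for the subspace diffeologies. For inversion, I would take a plot $r\mapsto[\gamma_r]$ and lift it locally to a plot $r\mapsto\gamma_r$ of $\smooth([0,1],X)$. Composing with the smooth map $t\mapsto 1-t$, using Lemma \ref{lemCompositionIsSmooth}, gives a plot $r\mapsto\gamma_r^\leftarrow$, and hence $r\mapsto[\gamma_r^\leftarrow]$ is a plot.

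I expect the main obstacle to be the careful verification of the identity and associativity laws, specifically producing honest diffeomorphisms $f\in\Diff([0,1])$ rather than merely monotone maps. I would handle this by systematically exploiting the endpoint-constancy convention, so that the piecewise-linear reparameterizations can be smoothed inside intervals where the paths are constant.
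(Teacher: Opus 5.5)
Your proposal is correct and follows the paper's proof. Multiplication is smooth as the restriction of the concatenation map of Lemma \ref{lemPathConcatenationIsSmooth}, and inversion is smooth by locally lifting a plot to $\smooth([0,1],X)$ and precomposing with $t\mapsto 1-t$; the group axioms, which the paper only calls standard, you verify using endpoint-constant representatives as the paper's Convention permits. One small remark: your retrace detour for the identity law is unnecessary (it already relies on $\gamma\sim\gamma\vee\mathrm{const}$), because endpoint constancy of $\gamma$ directly yields a genuine diffeomorphism $f$ of $[0,1]$ fixing $0$ and $1$ with $\gamma\vee c_\bullet=\gamma\circ f$.
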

\begin{proof}
    The fact that $\Omega_\bullet(X)$ is a group is a standard fact of retrace equivalence. Smoothness of the group multiplication was Lemma \ref{lemPathConcatenationIsSmooth}. We can show that inversion is smooth. Given a plot $r \mapsto [\gamma_r]$ for $\Omega_\bullet(X)$, the composition
    \[
    (r, t) \mapsto (r, 1 - t) \mapsto \gamma_r(1 - t) = \gamma_r^\leftarrow(t)
    \]
    is smooth, since the plot $r \mapsto [\gamma_r]$ for the quotient locally lifts into a plot $r \mapsto \gamma_r$ for $\smooth([0,1], X)$. This means that $r \mapsto [\gamma_r]\inverse = [\gamma_r^\leftarrow]$ is a plot for $\Omega_\bullet(X)$, finishing the proof.

\end{proof}

If $[\gamma] \in \Omega_\bullet(X)$, then the representative $\gamma$ (being constant near its endpoints) admits a smooth extension to a loop defined on all of $\R$. In particular, this implies:

\begin{proposition}
    \label{claimHolonomyRetraceEquivalence}
    Let $H^A_{\xi_0} : \mathcal L(X, \bullet) \to G$ be any holonomy representation for a diffeological group $G$ and connection $A$. Then $H^A_{\xi_0}$ induces a well-defined group homomorphism $\Omega_\bullet(X) \to G$, which we denote again by $H^A_{\xi_0}$.
\end{proposition}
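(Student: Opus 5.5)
To prove Proposition \ref{claimHolonomyRetraceEquivalence}, I would define the map on $\Omega_\bullet(X)$ by $[\gamma] \mapsto H^A_{\xi_0}(\tilde\gamma)$. Here $\gamma$ is a representative constant near $0$ and $1$ (by the Convention), and $\tilde\gamma \in \mathcal L(X,\bullet)$ is its smooth extension to all of $\R$, constant outside $[0,1]$. The horizontal lift $\hor_A(\tilde\gamma,0,\xi_0)$ restricted to $[0,1]$ depends only on $\gamma|_{[0,1]}$. This follows from the reparameterization property of Proposition \ref{propConnectionProperties} applied to inclusions of subintervals: lifting along a restriction of the domain is the restriction of the lift. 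So the choice of extension is irrelevant. The remaining steps are invariance under the two equivalence relations, followed by the homomorphism property.

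\textbf{Invariance under reparameterization.} Suppose $\alpha = \beta\circ f$ with $f \in \Diff([0,1])$, $f(0)=0$, $f(1)=1$, both paths constant near the endpoints. I would extend $f$ to a smooth map $\R\to\R$ that is the identity outside a neighbourhood of $[0,1]$, so that $\tilde\alpha = \tilde\beta\circ f$ near $[0,1]$. Reparameterization in Proposition \ref{propConnectionProperties} gives $\hor_A(\tilde\alpha,0,\xi_0) = \hor_A(\tilde\beta,0,\xi_0)\circ f$. Evaluating at $1$ and using $f(1)=1$ yields equal holonomies.

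\textbf{Invariance under retracing.} By the concatenation property, the lift of $\beta\vee\eta\vee\eta^\leftarrow\vee\alpha$ is the concatenation of successive lifts. It therefore suffices to show that the lift of $\eta\vee\eta^\leftarrow$ starting at $\xi$ returns to $\xi$. Since $\eta^\leftarrow = \eta\circ r$ with $r(t)=1-t$, reparameterization gives
\[
\hor_A(\eta^\leftarrow, 0, \xi) = \hor_A(\eta, 1, \xi)\circ r .
\]
So the lift of $\eta^\leftarrow$ starting at $\xi$ is the lift of $\eta$ through $\xi$ at time $1$, run backwards. Now let $\xi' = \hor_A(\eta^\leftarrow,0,\xi)(1)$, and lift $\eta$ from $\xi'$ at time $0$. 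By uniqueness of horizontal lifts through a point (which I would extract from the projection axiom together with $G$-equivariance and Lemma \ref{claimCoveringPlotsPrincipal}), this lift coincides with $\hor_A(\eta,1,\xi)$, since both are horizontal and pass through $\xi'$ at time $0$. Hence its endpoint at time $1$ is $\xi$.

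\textbf{Homomorphism and the main obstacle.} The concatenation property yields the formula $H^A_{\xi_0}(\gamma\vee\sigma) = H^A_{\xi_1}(\gamma) H^A_{\xi_0}(\sigma)$ with $\xi_1 = \xi_0 H^A_{\xi_0}(\sigma)$. Equivariance then gives $H^A_{\xi_0 g}(\gamma) = g^{-1}H^A_{\xi_0}(\gamma)g$, so
\[
H^A_{\xi_0}(\gamma\vee\sigma) = H^A_{\xi_0}(\sigma)^{-1}H^A_{\xi_0}(\gamma)H^A_{\xi_0}(\sigma)\,H^A_{\xi_0}(\sigma) = H^A_{\xi_0}(\sigma)^{-1}H^A_{\xi_0}(\gamma)H^A_{\xi_0}(\sigma)^2 .
\]
This does not reduce to $H^A_{\xi_0}(\gamma)H^A_{\xi_0}(\sigma)$ in general, and it is the point I expect to require the most care. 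What I would actually check is the direct statement: the lift of $\gamma\vee\sigma$ is the lift of $\sigma$, ending at $\xi_0 h_\sigma$, followed by the lift of $\gamma$ starting there. By equivariance the latter is $\hor_A(\gamma,0,\xi_0)\,h_\sigma$, which ends at $\xi_0 h_\gamma h_\sigma$. This gives $H(\gamma\vee\sigma) = H(\gamma)H(\sigma)$ directly. The homomorphism property thus follows from the concatenation and equivariance properties without conjugation bookkeeping, so the obstacle is only apparent. The genuinely delicate step is the uniqueness of horizontal lifts used in the retrace argument.
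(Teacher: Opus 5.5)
Your plan is the one the paper's one-line proof has in mind: reparameterization handles $\sim$, and concatenation handles splitting lifts and the homomorphism law. Your reparameterization step and your final direct homomorphism computation are correct.

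The gap is the uniqueness claim in the retrace step. The two paths you identify, $\hor_A(\eta,0,\xi')$ and $\hor_A(\eta,1,\xi)$, are horizontal at different times: the first is fixed by $A_0$, the second is only known to be fixed by $A_1$. The ingredients you name give only same-time uniqueness. Write $\hor_A(\eta,1,\xi)=\hor_A(\eta,0,\xi')\cdot g$ with $g(0)=e$ (Lemma \ref{claimCoveringPlotsPrincipal}). Then axioms (iv) and (vi) of Definition \ref{defConnection} give only $A_0\bigl(\hor_A(\eta,1,\xi)\bigr)=\hor_A(\eta,0,\xi')$. To finish, you would need $A_0$ to fix a path that is horizontal at time $1$. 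But (vi) is only $A_t(A_t\tilde\alpha)=A_t\tilde\alpha$ at a single $t$.

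This cross-time property does not follow from (i)--(vi). Take the trivial bundle $\R\times\R\to\R$ with $G=(\R,+)$ and set $A_t(\alpha,a)(u)=\bigl(\alpha(u),\,a(t)+(\alpha(u)-\alpha(t))^2\bigr)$. This map satisfies all six axioms. Both lifts above are horizontal and agree at time $0$, yet $\hor_A(\eta,0,\xi')(1)=\xi\cdot 2(\eta(1)-\eta(0))^2\neq\xi$ whenever $\eta(0)\neq\eta(1)$. This example violates the concatenation property, so it lies outside the setting of Proposition \ref{propConnectionProperties}. It only shows that projection, equivariance and the lemma do not suffice for your derivation.

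The repair is to avoid uniqueness and let the reparameterization property, with a non-injective $f$, absorb the retrace. This is what the paper's appeal to reparameterization amounts to. Since $\eta$ is constant on some $[0,\delta)$, you can smooth the corner of $t\mapsto|1-2t|$ inside the set where $|1-2t|<\delta/2$. This gives a smooth $\phi:\R\to\R$ with $\phi(0)=\phi(1)=1$ and $\eta\circ\phi$ equal to the constant extension of $\eta\vee\eta^{\leftarrow}$ (with $\eta$ itself extended constantly). Then $\hor_A(\eta\vee\eta^{\leftarrow},0,\xi)=\hor_A(\eta,1,\xi)\circ\phi$. Its value at $1$ is $\hor_A(\eta,1,\xi)(\phi(1))=\xi$ by the basepoint axiom. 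Combined with concatenation, this handles $\beta\vee\eta\vee\eta^{\leftarrow}\vee\alpha$ exactly as you intended.

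Separately, strike your first homomorphism display: it is wrong, not merely unhelpful. For a right action, $\xi_0H_{\xi_0}(\gamma\vee\sigma)=\xi_1H_{\xi_1}(\gamma)=\xi_0H_{\xi_0}(\sigma)H_{\xi_1}(\gamma)$. So the factors in the formula you quoted from the remark after Definition \ref{defHolonomyRepresentation} are in the wrong order. With your conjugation identity, the corrected formula gives $H_{\xi_0}(\gamma)H_{\xi_0}(\sigma)$, which agrees with your direct computation.
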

\begin{proof}
    This follows from the reparameterization and concatenation properties in Proposition \ref{propConnectionProperties}.

\end{proof}

Proposition \ref{claimHolonomyRetraceEquivalence} is the reason for using retrace equivalence. In \cite{Barrett}, thin-homotopy defines equivalence classes for which holonomy representations are constant on in the category of \emph{smooth manifolds}, but it is not yet clear how to generalize thin-homotopy to the category of \emph{diffeological spaces} such that holonomy representations of any diffeological connection are constant on thin-homotopy classes. There are other equivalence relations on the set of based loops to consider; see \cite{Meneses_2021} for a detailed comparison and explanations.

\subsection{The Universal Connection}
Adapting Singer's construction in \cite{Singer}, we show that every pointed diffeological space $(X, \bullet)$ gives rise to a diffeological principal $G$-bundle over itself equipped with a diffeological connection called the \emph{universal connection}.

Let $\mathcal F_\bullet(X) = \{ [\alpha] \in \mathcal F(X) : \alpha(0) = \bullet \}$ be endowed with the subspace diffeology from $\mathcal F(X)$. There is a natural projection 
\[
\tau : \mathcal F_\bullet(X) \to X, \quad \tau([\alpha]) = \alpha(1),
\]
which is a well-defined smooth surjection. The map
\[
\mathcal F_\bullet(X) \times \Omega_\bullet(X) \to \mathcal F_\bullet(X), \quad ([\alpha], [\gamma]) \mapsto [\alpha] \vee [\gamma] := [\alpha \vee \gamma],
\]
defines a smooth right-action of $\Omega_\bullet(X)$ on $\mathcal F_\bullet(X)$, thanks to Lemma \ref{lemPathConcatenationIsSmooth}. To show $\tau : \mathcal F_\bullet(X) \to X$ is actually a diffeological principal $\Omega_\bullet(X)$-bundle, we will need the following result, which roughly says that plots for $X$ determine locally a smooth family of radial paths in $X$:
\begin{lemma}
    \label{lemRadialPaths}
    Let $\psi : U \to X$ be a plot and put $x_r := \psi(r)$. Fix a point $r_0 \in U$. Then there exists a neighborhood $V \se U$ of $r_0$ and a plot $V \ni r \mapsto \alpha_r \in \smooth([0,1], X)$ such that $\alpha_r(0) = x_{r_0}$ and $\alpha_r(1) = x_r$ for all $r \in V$.
\end{lemma}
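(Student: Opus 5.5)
The idea is to use the plot $\psi$ itself to produce the radial paths: restrict $\psi$ to a small ball around $r_0$ and follow straight line segments from $r_0$ to $r$. Concretely, since $U \se \R^n$ is open, I will choose an open ball $V = B(r_0, \varepsilon)$ with $\overline{V} \se U$. Being convex, $V$ contains the segment from $r_0$ to any $r \in V$. For $r \in V$ and $t \in [0,1]$ I define
\[
\alpha_r(t) := \psi\big(r_0 + t(r - r_0)\big).
\]
Then $\alpha_r(0) = \psi(r_0) = x_{r_0}$ and $\alpha_r(1) = \psi(r) = x_r$, which are exactly the required endpoint conditions.

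Next I must check that each $\alpha_r$ lies in $\smooth([0,1], X)$ and that $r \mapsto \alpha_r$ is a plot for the functional diffeology. The key map is
\[
F : V \times \R \supseteq W \to U, \qquad F(r,t) = r_0 + t(r - r_0),
\]
defined on the open set $W = \{(r,t) : r_0 + t(r-r_0) \in U\}$. This set contains $V \times [0,1]$ and in fact a neighborhood of it. The map $F$ is $\smooth$ (indeed polynomial), so by the Chain Rule axiom $\psi \circ F$ is a plot for $X$.

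With this in hand, the evaluation map $(r,t) \mapsto \alpha_r(t)$ on $V \times [0,1]$ is the restriction of the plot $\psi \circ F$. It is therefore smooth as a map $V \times [0,1] \to X$, where $[0,1]$ carries its subspace diffeology from $\R$. Fixing $r$, each $\alpha_r$ is smooth on $[0,1]$ for the same reason. By Definition \ref{defBasicConstructions}(4), this says precisely that $r \mapsto \alpha_r$ is a plot for $\smooth([0,1],X)$.

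The only subtle point is that $[0,1]$ is not open, so smoothness of maps on $V \times [0,1]$ has to be understood through the subspace diffeology. I handle this by observing that any plot $\phi : O \to V \times [0,1]$ becomes, after composing with the inclusion, a smooth map into the open set $W$. Then the composite $(\psi \circ F) \circ \phi$ is a plot by the Chain Rule. This is routine, and I expect no real obstacle.
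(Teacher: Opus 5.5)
Your proposal is correct and is essentially the paper's proof: push the straight-line segments $t \mapsto (1-t)r_0 + tr$ in a convex ball $V \se U$ around $r_0$ forward by $\psi$, and get smoothness of $(r,t) \mapsto \alpha_r(t)$ from the chain rule. Your extra care with the non-open interval $[0,1]$, which you handle via the open set $W$, fills in a detail the paper leaves implicit.
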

\begin{proof}
    We can choose $V \se U$ to be any open ball containing $r_0$. For each $r \in V$, consider the straight-line path $\ell_r : [0,1] \to V$ from $r_0$ to $r$ in $V$, i.e. $\ell_r$ is given by
    \[
    \ell_r(t) = (1 - t)r_0 + t r.
    \]
    Then $r \mapsto \ell_r$ is a plot for $\smooth([0,1], V)$, since the evaluation $(r,t) \mapsto \ell_r(t)$ is smooth. Define $\alpha_r : [0,1] \to X$ by
    \[
    \alpha_r(t) = \psi(\ell_r(t)).
    \]
    The map $V \ni r \mapsto \alpha_r$ defines the desired plot for $\smooth([0,1], X)$, since the evaluation $(r,t) \mapsto \alpha_r(t) = \psi((1-t)r_0 + tr)$ is a composition of smooth maps.

\end{proof}

\begin{proposition}
    \label{claimPrincipalOmegaBundle}
    The projection $\tau : \mathcal F_\bullet(X) \to X$ is a diffeological principal $\Omega_\bullet(X)$-bundle.
\end{proposition}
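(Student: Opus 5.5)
I will verify criterion (2) of Proposition \ref{claimEquivariantTrivializations} rather than the two conditions of Definition \ref{defDiffeologicalPrincipalBundle} directly, since local sections come almost for free from Lemma \ref{lemRadialPaths}. Smoothness of $\tau$ and of the right action were noted above. Freeness is also immediate: $[\alpha]\vee[\gamma]=[\alpha]$ implies $[\gamma]=[\alpha^\leftarrow\vee\alpha]=[\text{const}_\bullet]$ by retrace cancellation. Transitivity on fibers holds because for $[\alpha],[\beta]\in\tau\inverse(x)$ we have $[\beta]=[\alpha]\vee[\alpha^\leftarrow\vee\beta]$, and $\alpha^\leftarrow\vee\beta$ is a loop at $\bullet$.

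\textbf{Local sections.} Let $\psi:U\to X$ be a plot and fix $r_0\in U$. Lemma \ref{lemRadialPaths} gives a ball $V\ni r_0$ and a plot $r\mapsto\alpha_r$ of paths from $x_{r_0}$ to $x_r$. By path-connectedness, choose a smooth path $\beta$ from $\bullet$ to $x_{r_0}$, reparametrized to be constant near its endpoints. Precomposing $\alpha_r$ with the bump function $f$ of Lemma \ref{lemConstantNearEndPoint} keeps it a plot by Lemma \ref{lemCompositionIsSmooth}. Define
\[
\Psi(r)=[\alpha_r\vee\beta].
\]
By Lemma \ref{lemPathConcatenationIsSmooth}, $\Psi:V\to\mathcal F_\bullet(X)$ is a plot, and $\tau\circ\Psi=\psi\upharpoonright V$.

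\textbf{The trivialization.} Set $T(r,[\gamma])=(r,\Psi(r)\vee[\gamma])$. This map is smooth, lands in $\psi^*\mathcal F_\bullet(X)$, and is equivariant. Its candidate inverse is
\[
(r,[\eta])\mapsto\bigl(r,[\beta^\leftarrow\vee\alpha_r^\leftarrow\vee\eta]\bigr),
\]
which is a two-sided inverse by retrace cancellation. The main obstacle is showing that this inverse is smooth. A plot of $\psi^*\mathcal F_\bullet(X)$ is a pair $(\phi(s),[\eta_s])$ where $\phi$ is a plot of $V$ and $\tau[\eta_s]=\psi(\phi(s))$. The quotient diffeology lets us locally lift $s\mapsto[\eta_s]$ to a plot $s\mapsto\eta_s$ of $\smooth([0,1],X)$. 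After precomposing with $f$, we may assume each $\eta_s$ is constant near its endpoints.

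The endpoint-matching condition $\eta_s(1)=\alpha_{\phi(s)}(1)$ holds for every representative, since $\tau$ is well defined on classes. Concatenation and reversal are computed on these representatives exactly as in the proofs of Lemma \ref{lemPathConcatenationIsSmooth} and Proposition \ref{claimLoopSpaceDiffeologicalGroup}. The family $s\mapsto\alpha_{\phi(s)}^\leftarrow$ is a plot because $r\mapsto\alpha_r$ is. Hence $s\mapsto[\beta^\leftarrow\vee\alpha^\leftarrow_{\phi(s)}\vee\eta_s]$ is locally a plot of $\Omega_\bullet(X)$, so the inverse is smooth.

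The care needed is only in citing the smoothness of concatenation on $\mathcal F^\vee(X)$, which handles triple concatenation by applying it twice, and in checking that local lifts in the quotient are compatible with the subspace diffeology of $\psi^*\mathcal F_\bullet(X)$. Covering $U$ by such balls $V$ then yields the $\Omega_\bullet(X)$-equivariant plot trivializations required by Proposition \ref{claimEquivariantTrivializations}(2), which proves the proposition.
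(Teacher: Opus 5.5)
Your proof is correct, and it differs from the paper's mainly in which characterization of a principal bundle you verify.

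\textbf{The paper's route.} It checks Definition \ref{defDiffeologicalPrincipalBundle} directly:
\begin{itemize}
\item it exhibits the smooth inverse $([\alpha],[\beta])\mapsto([\alpha],[\alpha^{\leftarrow}\vee\beta])$ of the graph map $\Gamma$;
\item it proves $\mathcal F_\bullet(X)/\Omega_\bullet(X)\to X$ is a diffeomorphism. The nontrivial half is that plots of $X$ lift locally to plots $r\mapsto[\alpha_r\vee\lambda_{x_{r_0}}]$, built from Lemma \ref{lemRadialPaths}.
\end{itemize}

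\textbf{Your route.} You verify criterion (2) of Proposition \ref{claimEquivariantTrivializations} instead, but your two ingredients are the paper's two. Your local section $\Psi(r)=[\alpha_r\vee\beta]$ is exactly the paper's lift. The second component of your $T^{-1}(r,[\eta])$ is the second component of $\Gamma^{-1}(\Psi(r),[\eta])$, since $(\alpha_r\vee\beta)^{\leftarrow}$ is $\beta^{\leftarrow}\vee\alpha_r^{\leftarrow}$ up to reparameterization.

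\textbf{Trade-off.} Your version yields explicit $\Omega_\bullet(X)$-equivariant plot trivializations $T(r,[\gamma])=(r,[\alpha_r\vee\beta\vee\gamma])$, a concrete local picture of the bundle. The cost is that it rests on the implication (2)$\Rightarrow$(1) of Proposition \ref{claimEquivariantTrivializations}. The paper imports that result from Iglesias-Zemmour without proof, and with the action switched from left to right. That implication also hides precisely the subduction statement $X\cong\mathcal F_\bullet(X)/\Omega_\bullet(X)$, which the paper checks by hand and is therefore self-contained relative to its definition. Your explicit handling of reversal and of endpoint matching for representatives is more careful than the paper's one-line appeal to Lemma \ref{lemPathConcatenationIsSmooth} for the smoothness of $\Gamma^{-1}$, since that lemma covers concatenation but not reversal.
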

\begin{proof}
    This amounts to showing two things (see Definition \ref{defDiffeologicalPrincipalBundle}):
    \begin{enumerate}[(i)]
        \item The graph map $\Gamma : \mathcal F_\bullet(X) \times \Omega_\bullet(X) \to \mathcal F_\bullet(X) \times \mathcal F_\bullet(X)$ given by
        \[
        \Gamma([\alpha], [\gamma]) = ([\alpha], [\alpha \vee \gamma])
        \]
        is a diffeomorphism onto its image.
        \item There is a diffeomorphism $\Phi : \mathcal F_\bullet(X) / \Omega_\bullet(X) \to X$ such that 
        \[
        \Phi([\alpha] \vee \Omega_\bullet(X)) = \tau([\alpha]) = \alpha(1),
        \]
        where $[\alpha]\vee \Omega_\bullet(X)$ is the equivalence class 
        \[
        [\alpha] \vee \Omega_\bullet(X) = \big\{ [\alpha \vee \gamma] : [\gamma] \in \Omega_\bullet(X) \big\}
        \]
        in the quotient $\mathcal F_\bullet(X) / \Omega_\bullet(X)$.
    \end{enumerate}

    For (i), the inverse of $\Gamma$, defined on $\image \Gamma$, is given by 
    \[
    \Gamma\inverse([\alpha], [\beta]) = ([\alpha], [\alpha^{\leftarrow} \vee \beta]), \quad \text{where } \alpha^{\leftarrow}(t) = \alpha(1 - t),
    \]
    which is smooth by Lemma \ref{lemPathConcatenationIsSmooth}. So, $\Gamma$ is a diffeomorphism onto its image.

    For (ii), we need to show that the map $\Phi : \mathcal F_\bullet(X) / \Omega_\bullet(X) \to X$ given by
    \[
    \Phi([\alpha] \vee \Omega_\bullet(X)) = \alpha(1)
    \]
    is smooth with smooth inverse. By definition of the quotient diffeology, any plot for $\mathcal F_\bullet(X) / \Omega_\bullet(X)$ locally has the form $r \mapsto [\alpha_r] \vee \Omega_\bullet(X)$ for a plot $r \mapsto \alpha_r \in \smooth([0,1], X)$. Then $r \mapsto \Phi([\alpha_r] \vee \Omega_\bullet(X)) = \alpha_r(1)$ is just the evaluation map $(r,t) \mapsto \alpha_r(t)$ restricted to $t = 1$, which is smooth since $r \mapsto \alpha_r$ is a plot for $\smooth([0, 1], X)$. This means that $\Phi$ is smooth.

    For each $x \in X$, let $\lambda_x : [0,1] \to X$ be some path with $\lambda_x(0) = \bullet$ and $\lambda_x(1) = x$, using the fact that $X$ is path-connected. The inverse $\Phi\inverse : X \to \mathcal F_\bullet(X) / \Omega_\bullet(X)$ is then
    \[
    \Phi\inverse(x) = [\lambda_x] \vee \Omega_\bullet(X).
    \]
    If $[\lambda_x] = [\lambda'_x] \in \mathcal F_\bullet(X)$ satisfies $\lambda_x(1) = x = \lambda'_x(1)$, then $[\lambda_x] = [\lambda'_x] \vee [(\lambda'_x)^{\leftarrow} \vee \lambda_x]$, hence $[\lambda_x] \vee \Omega_\bullet(X) = [\lambda'_x] \vee \Omega_\bullet(X)$ and $\Phi\inverse$ is well-defined. It remains to show $\Phi\inverse$ is smooth. Suppose a plot $r \mapsto x_r$ is given. For a point $r_0$ in its domain, there is (by Lemma \ref{lemRadialPaths}) a plot $r \mapsto \alpha_r$ for $\smooth([0, 1], X)$, defined on an open neighborhood of $r_0$, such that $\alpha_r(0) = x_{r_0}$ and $\alpha_r(1) = x_r$. Then
    \[
    r \mapsto \Phi\inverse(x_r) = [\alpha_r \vee \lambda_{x_{r_0}}] \vee \Omega_\bullet(X)
    \]
    locally defines a plot for $\Omega_\bullet(X)$ (recall the convention that the representatives $\alpha_r$ and $\lambda_{x_{r_0}}$ are chosen to be constant near $0$ and $1$). Since $r_0$ was arbitrary, we've shown that $\Phi\inverse$ is smooth, hence a diffeomorphism.

\end{proof}

\begin{definition}
    \label{defPointedPathBundle}
    For a path-connected, pointed diffeological space $(X, \bullet)$, the \textbf{pointed path bundle} associated to $(X, \bullet)$ is the principal $\Omega_\bullet(X)$-bundle $\tau : \mathcal F_\bullet(X) \to X$.
\end{definition}

There is a natural notion of parallel transport on the pointed path bundle, which Singer calls the \emph{universal connection} \cite{Singer}. We describe how to adapt Singer's universal connection to the diffeological setting now.

We use the following notation: for any map $\alpha : (a,b) \to X$ and points $r, s \in (a,b)$, we denote by $\alpha^{r \to s}$ to be the map $\alpha^{r \to s} : [0,1] \to X$ given by
\[
\alpha^{r \to s}(t) = \alpha(ts + (1-t) r).
\]
In other words, $\alpha^{r \to s}$ is the part of $\alpha$ starting at $\alpha(r)$ and ending at $\alpha(s)$, parameterized by the closed interval $[0,1]$.
\begin{definition}
    \label{defUniversalConnection}
    The map $\Theta : \hatP \mathcal F_\bullet(X) \to \mathcal P \mathcal F_\bullet(X)$ defined by
    \[
    \Theta_r[\tilde \alpha](s) = \big[(\tau \circ [\tilde \alpha])^{r \to s} \vee \tilde \alpha(r)\big], \quad \text{for each } ([\tilde \alpha], r) \in \hat P \mathcal F_\bullet(X),
    \]
    is called the \textbf{universal connection of $(X, \bullet)$}. Here $[\tilde \alpha] \in \mathcal P \mathcal F_\bullet(X)$ denotes the smooth path $s \mapsto [\tilde \alpha(s)]$ in $\mathcal F_\bullet(X)$.
\end{definition}

It is worth spending some time to exercise caution: $[\tilde \alpha] \in \mathcal P \mathcal F_\bullet(X)$ is a map $[\tilde \alpha] : I \to \mathcal F_\bullet(X)$, where $I \se \R$ is some open interval, and hence determines, for each $s \in I$, an element $[\tilde \alpha(s)]$ of $\mathcal F_\bullet(X)$, which is itself a class of paths $\tilde \alpha(s) : [0, 1] \to X$ satisfying $\tilde \alpha(s)(0) = \bullet$. In other words, it may be helpful to view $[\tilde \alpha] \in \mathcal P \mathcal F_\bullet(X)$ as a smooth family (parameterized by $I$) of paths $[0,1] \to X$, each starting at $\bullet$.

\begin{theorem}
    \label{claimUniversalConnectionIsDiffeologicalConnection}
    The universal connection 
    \[
    \Theta : \hatP \mathcal F_\bullet(X) \to \mathcal P \mathcal F_\bullet(X), \quad \Theta_r[\tilde \alpha](s) = \big[(\tau \circ [\tilde \alpha])^{r \to s} \vee \tilde \alpha(r)\big]
    \]
    is a diffeological connection on the pointed path bundle $\tau : \mathcal F_\bullet(X) \to X$.
\end{theorem}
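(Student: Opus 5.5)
The plan is to verify directly that $\Theta$ is well defined, smooth, and satisfies the six axioms of Definition \ref{defConnection}. Throughout, write $\alpha := \tau\circ[\tilde\alpha] \in \mathcal P X$ for the projected path, so that
\[
\Theta_r[\tilde\alpha](s) = [\alpha^{r\to s}\vee \tilde\alpha(r)].
\]

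\textbf{Well-definedness and smoothness.} First, the class $[\alpha^{r\to s}\vee\tilde\alpha(r)]$ does not depend on the representative of $[\tilde\alpha(r)]$. This is because concatenation on a fixed side respects both reparameterization and retraces; after reparameterizing we may assume $\alpha^{r\to s}$ is constant near its endpoints, and then Lemma \ref{lemPathConcatenationIsSmooth} applies.

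Second, each $\Theta_r[\tilde\alpha]$ is a smooth path in $\mathcal F_\bullet(X)$ with domain $\dom(\tilde\alpha)$. Indeed, $(s,t)\mapsto \alpha(ts+(1-t)r)$ is smooth, since $\alpha$ is a plot and $\tau$ is smooth. Hence $s\mapsto \alpha^{r\to s}$ is a plot for $\smooth([0,1],X)$. Concatenating with the constant $\tilde\alpha(r)$ is smooth by Lemma \ref{lemPathConcatenationIsSmooth}. One must compose with the bump function $f$ from Lemma \ref{lemConstantNearEndPoint}, so that $\alpha^{r\to s}\circ f$ is constant near $0,1$; Lemma \ref{lemCompositionIsSmooth} makes this harmless.

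\textbf{Smoothness of $\Theta$ on $\hatP\mathcal F_\bullet(X)$.} This is the same argument carried out with parameters, and I expect it to be the main obstacle. Given a plot $u\mapsto([\tilde\alpha_u],r_u)$, we use Definition \ref{defFunctionalDiffeologyOfDiffeology}. Locally in $(u,s)$ the evaluation $(u,s)\mapsto[\tilde\alpha_u(s)]$ is a plot of $\mathcal F_\bullet(X)$. Lifting via the quotient diffeology, $u\mapsto \tilde\alpha_u(r_u)$ is locally a plot of $\smooth([0,1],X)$, and $(u,s,t)\mapsto \tau[\tilde\alpha_u](ts+(1-t)r_u)$ is smooth wherever the domain conditions hold. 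We then concatenate using the parametrized version of Lemma \ref{lemPathConcatenationIsSmooth}. The care needed is in choosing the neighborhoods $V\times W$ so that all $s$ and $r_u$ lie in the common domains; openness of the domains in Definition \ref{defFunctionalDiffeologyOfDiffeology} handles this.

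\textbf{The axioms.}
\begin{enumerate}[(i)]
\item Domain holds by construction.
\item Lifting: $\tau(\Theta_r[\tilde\alpha](s)) = \alpha^{r\to s}(1) = \alpha(s)$.
\item Basepoint: $\alpha^{r\to r}$ is constant, and a constant path is retrace-trivial (it equals $\eta\vee\eta^{\leftarrow}$ with $\eta$ constant, up to reparameterization). Hence $\Theta_r[\tilde\alpha](r) = [\tilde\alpha(r)]$.
\item $G$-equivariance: for a smooth $\rho$ into $\Omega_\bullet(X)$, we have $\tau\circ([\tilde\alpha]\vee\rho) = \alpha$. So $\Theta_r([\tilde\alpha]\vee\rho)(s) = [\alpha^{r\to s}\vee\tilde\alpha(r)\vee\rho(r)]$, which equals $\Theta_r[\tilde\alpha](s)\vee\rho(r)$ by associativity of concatenation up to reparameterization.
\item Reparameterization: $(\alpha\circ f)^{r\to s}$ and $\alpha^{f(r)\to f(s)}$ have the same image traced in the same way, but they are not reparameterizations of each other when $f$ is non-monotone. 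However, $f$ restricted to the segment from $r$ to $s$ traces $[f(r),f(s)]$ with back-and-forth excursions. Such excursions are retraces, so the two classes agree in $\mathcal F(X)$. Formally, one approximates/decomposes the path $t\mapsto f(ts+(1-t)r)$ into monotone pieces. This is delicate when $f$ has infinitely many turning points, and is the second likely obstacle. I would first try to argue via the equality of classes for the linear-interval paths and reduce to the case of finitely many critical intervals; failing that, I would note that the needed identity may require a weaker equivalence.
\item Projection: $\tau\circ\Theta_r[\tilde\alpha] = \alpha$ and $\Theta_r[\tilde\alpha](r) = [\tilde\alpha(r)]$ by (ii) and (iii). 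Hence $\Theta_r(\Theta_r[\tilde\alpha]) = \Theta_r[\tilde\alpha]$ directly from the formula, since $\Theta_r[\tilde\alpha]$ depends only on $\alpha$ and $[\tilde\alpha(r)]$.
\end{enumerate}
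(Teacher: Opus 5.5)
You follow the same route as the paper: check smoothness of $\Theta$ directly, then verify axioms (i)--(vi) of Definition \ref{defConnection} one at a time. Your handling of well-definedness, smoothness, (i)--(iv) and (vi) is fine. Your (vi) observes that $\Theta_r[\tilde\alpha]$ depends only on $\tau\circ[\tilde\alpha]$ and $[\tilde\alpha(r)]$, and that (ii) and (iii) say $\Theta_r$ preserves both; this is a tidier version of the paper's computation. The gap is axiom (v). You identify the difficulty but do not prove it, and you end with the concession that the identity ``may require a weaker equivalence.'' With $\alpha=\tau\circ[\tilde\alpha]$ and $g(t)=f(ts+(1-t)u)$, what must be shown is
\[
\big[(\alpha\circ g)\vee\tilde\alpha(f(u))\big]=\big[\alpha^{f(u)\to f(s)}\vee\tilde\alpha(f(u))\big]\quad\text{in }\mathcal F(X).
\]
The paper's own proof simply writes this as an equality in the second line of its computation, with no argument. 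So your suspicion is well founded, but your proof is still incomplete at exactly this step.

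When $g$ has finitely many monotone pieces, you can close the step by induction on the number of pieces. Suppose $g$ goes $a\nearrow b\searrow c$ on two consecutive pieces. Split whichever piece has the larger range at level $c$ or $a$. The path then contains a subpath of the form $\eta$ followed by $\eta^{\leftarrow}$, i.e.\ a retrace, which you delete. The final monotone path is identified with $\alpha^{f(u)\to f(s)}$ by a monotone but not necessarily diffeomorphic reparameterization, of the kind the paper already uses tacitly in Lemma \ref{lemConstantNearEndPoint}. However, axiom (v) quantifies over all $f\in\mathcal P(\dom(\tilde\alpha))$. For example, take $f(t)=e^{-1/t^2}\sin(1/t)$, scaled into $\dom(\tilde\alpha)$, with $u<0<s$. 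Then $g$ has infinitely many turning points, the induction never terminates, and it is not clear that any finite chain of retraces and reparameterizations connects $\alpha\circ g$ to $\alpha^{f(u)\to f(s)}$. To finish, you must either give an argument for such infinitely oscillating $g$ within retrace equivalence, or change the setup, e.g.\ by passing to a coarser relation such as thin homotopy. The latter would force you to revisit Proposition \ref{claimHolonomyRetraceEquivalence}, and the paper itself says it is unclear whether holonomy is invariant under thin homotopy in the diffeological setting.
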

\begin{proof}
    We show first that $\Theta : \hatP \mathcal F_\bullet(X) \to \mathcal P \mathcal F_\bullet(X)$ is smooth. Let $u \mapsto ([\tilde \alpha_u], r_u)$ be a plot for $\hatP\mathcal F_\bullet(X)$. Now $u \mapsto \Theta_{r_u}[\tilde \alpha_u]$ is a plot for $\mathcal P \mathcal F_\bullet(X)$ if and only if the evaluation
    \[
    (u, s) \mapsto \Theta_{r_u}[\tilde \alpha_u](s) \in \mathcal F_\bullet(X)
    \]
    is smooth, which is equivalent to the higher order evaluation
    \[
    ((u,s), t) \mapsto \Theta_{r_u}[\tilde \alpha_u](s)(t) \in X
    \]
    being smooth. But we see that $\Theta_{r_u}[\tilde \alpha_u](s)(t) = \tilde \alpha_u(st + (1 - t)r_u)(1)$ smoothly depends on $u, s$, and $t$. Therefore, $\Theta$ is a smooth. The proof concludes after verifying the conditions of Definition \ref{defConnection}:
    \begin{enumerate}[(i)]
        \item \emph{Domain}: By definition of $\Theta$.
        \item \emph{Lifting}: Observe that $(\tau \circ [\tilde \alpha])^{r \to s}(1) = \tau([\tilde \alpha(s)])$. Then,
        \[
        \tau(\Theta_r[\tilde \alpha](s)) = \tau\big(\big[(\tau \circ [\tilde \alpha])^{r \to s} \vee \tilde \alpha(r)\big]\big) = \tau([\tilde \alpha(s)]).
        \]
        \item \emph{Basepoint}: Since $(\tau \circ [\tilde \alpha])^{r \to r}$ is the constant path at $\tilde \alpha(r)(1)$,
        \[
        \Theta_r[\tilde \alpha](r) = \big[(\tau \circ [\tilde \alpha])^{r \to r} \vee \tilde \alpha(r) \big] = [\tilde \alpha(r)].
        \]
        \item \emph{$G$-equivariance}: Let $[\rho] \in \mathcal P\Omega_\bullet(X)$. Denote by $[\tilde \alpha \vee \rho]$ the path $s \mapsto [\tilde \alpha(s) \vee \rho(s)]$ in $\mathcal F_\bullet(X)$. Then,
        \begin{align*}
            \Theta_r[\tilde \alpha \vee \rho](s) &= \big[(\tau \circ [\tilde \alpha \vee \rho])^{r \to s}\vee [\tilde \alpha \vee \rho](r) \big] \\
            &= \big[(\tau \circ [\tilde \alpha])^{r \to s}\big] \vee \big[\tilde \alpha(r)\big] \vee \big[\rho(r)\big] \\
            &= \Theta_r[\tilde \alpha](s) \vee \big[\rho(r)\big].
        \end{align*}
        \item \emph{Reparameterization}: Let $f$ be a smooth path in $\dom([\tilde \alpha])$. For each $u,s \in \dom(f)$,
        \begin{align*}
            \Theta_u([\tilde \alpha] \circ f)(s) &= \big[(\tau \circ [\tilde \alpha] \circ f)^{u \to s} \vee ([\tilde \alpha] \circ f)(u)\big] \\
            &= \big[(\tau \circ [\tilde \alpha])^{f(u) \to f(s)}\big] \vee [\tilde \alpha(f(u))]\big] \\
            &= \big(\Theta_{f(u)}[\tilde \alpha] \circ f\big)(s).
        \end{align*}
        \item \emph{Projection}: We have that
        \begin{align*}
        \Theta_r(\Theta_r[\tilde \alpha])(s) &= \big[(\tau \circ \Theta_r[\tilde \alpha])^{r\to s} \vee \Theta_r[\tilde \alpha](r)\big] \\
        &= \big[(\tau \circ \Theta_r[\tilde \alpha])^{r\to s} \vee \tilde \alpha(r)\big],
        \end{align*}
        where we have used the basepoint property in the last step. Now, for each $t \in [0,1]$,
        \begin{align*}
            (\tau \circ \Theta_r[\tilde \alpha])^{r \to s}(t) &= \tau\big( \Theta_r[\tilde \alpha](ts + (1-t)r) \big) \\
            &= \tau\big( \big[ (\tau \circ [\tilde \alpha])^{r \to ts + (1-t)r}  \vee \tilde \alpha(r)\big] \big)\\
            &= (\tau \circ [\tilde \alpha])^{r \to ts + (1 - t)r}(1) \\
            &= \big(\tau \circ [\tilde \alpha]\big)(ts + (1-t)r) \\
            &= (\tau \circ [\tilde \alpha])^{r \to s}(t).
        \end{align*}
        Therefore,
        \[
        \Theta_r(\Theta_r[\tilde \alpha])(s) = \big[(\tau \circ [\tilde \alpha])^{r \to s} \vee \tilde \alpha(r)\big] = \Theta_r[\tilde \alpha](s).
        \]
    \end{enumerate}

\end{proof}
The universal connection $\Theta : \hatP \mathcal F_\bullet(X) \to \mathcal P \mathcal F_\bullet(X)$, being a diffeological connection, gives rise to a horizontal lifting function 
\[
\hor_\Theta : \ev^*_X\big(\mathcal F_\bullet(X)\big) \to \mathcal P\mathcal F_\bullet(X),
\]
as described in Section \ref{secPathSpacesAndUniversalConnection}. We write down an explicit formula for it:
\begin{proposition}
    \label{claimHorizontalLiftOfUniversalConnection}
    Given $\alpha : (a, b) \to X$, an initial time $t_0 \in (a,b)$, and an initial path $[\beta_0] \in \tau\inverse(\alpha(t_0))$, define $\bar \alpha_{\beta_0} : (a, b) \to \mathcal F_\bullet(X)$ by
    \[
    \bar \alpha_{\beta_0}(s) = [\alpha^{t_0 \to s} \vee \beta_0].
    \]
    Then, we have $\hor_\Theta(\alpha, t_0, [\beta_0]) = \bar \alpha_{\beta_0}$.
\end{proposition}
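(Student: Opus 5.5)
The plan is to characterize $\hor_\Theta(\alpha, t_0, [\beta_0])$ as the \emph{unique} $\Theta$-horizontal path in $\mathcal P\mathcal F_\bullet(X)$ that lifts $\alpha$ and passes through $[\beta_0]$ at time $t_0$. I would then check that $\bar\alpha_{\beta_0}$ has these three properties.

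\textbf{Step 1: uniqueness.} Let $\tilde\alpha_1, \tilde\alpha_2 \in \mathcal P \mathcal F_\bullet(X)$ both lift $\alpha$, both satisfy $\tilde\alpha_i(t_0) = [\beta_0]$, and both be $\Theta$-horizontal. By Lemma \ref{claimCoveringPlotsPrincipal}, applied to the plots $\tilde\alpha_1, \tilde\alpha_2 : (a,b) \to \mathcal F_\bullet(X)$, there is a smooth path $\rho : (a,b) \to \Omega_\bullet(X)$ with $\tilde\alpha_2 = \tilde\alpha_1 \vee \rho$. Freeness of the action gives $\rho(t_0) = e$. The projection, basepoint and $G$-equivariance axioms of Definition \ref{defConnection} then give
\[
\tilde\alpha_2 = \Theta_{t_0}\tilde\alpha_2 = \Theta_{t_0}(\tilde\alpha_1 \vee \rho) = \Theta_{t_0}\tilde\alpha_1 \vee \rho(t_0) = \tilde\alpha_1 .
\]
Here I use that horizontality at one time can be taken to be at $t_0$. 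If needed, this follows from the reparameterization axiom together with the construction of $\hor_A$ in \cite{PIZ_Diffeology} as $A_{t_0}$ applied to a local lift through $\xi$. The path $\hor_\Theta(\alpha,t_0,[\beta_0])$ has all three properties by construction, so it suffices to verify them for $\bar\alpha_{\beta_0}$.

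\textbf{Step 2: $\bar\alpha_{\beta_0}$ is a smooth lift through $[\beta_0]$.} The path $\alpha^{t_0\to s}$ need not be constant near its endpoints. So I replace it by $\alpha^{t_0 \to s}\circ f$, with $f$ the bump reparameterization from Lemma \ref{lemConstantNearEndPoint}; this does not change the class. The evaluation
\[
(s,t) \mapsto \big(\alpha^{t_0\to s}\circ f\big)(t) = \alpha\big(f(t)s + (1-f(t))t_0\big)
\]
is smooth. Hence $s \mapsto \alpha^{t_0\to s}\circ f$ is a plot for $\smooth([0,1],X)$. Concatenating with the constant plot $s \mapsto \beta_0$, as in Lemma \ref{lemPathConcatenationIsSmooth}, and passing to the quotient shows $\bar\alpha_{\beta_0} \in \mathcal P\mathcal F_\bullet(X)$. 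Evaluating gives $\tau(\bar\alpha_{\beta_0}(s)) = \alpha(s)$. Also $\bar\alpha_{\beta_0}(t_0) = [c \vee \beta_0]$ with $c$ the constant path at $\alpha(t_0)$, which equals $[\beta_0]$.

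\textbf{Step 3: horizontality.} By Definition \ref{defUniversalConnection}, with $\tau\circ\bar\alpha_{\beta_0} = \alpha$,
\[
\Theta_{t_0}[\bar\alpha_{\beta_0}](s) = \big[\alpha^{t_0\to s} \vee \bar\alpha_{\beta_0}(t_0)\big] = \big[\alpha^{t_0\to s} \vee \beta_0\big] = \bar\alpha_{\beta_0}(s),
\]
using $\bar\alpha_{\beta_0}(t_0) = [\beta_0]$ from Step 2. Combined with Step 1, this proves $\hor_\Theta(\alpha,t_0,[\beta_0]) = \bar\alpha_{\beta_0}$.

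\textbf{Main obstacle.} The delicate point is the identity $[c \vee \beta_0] = [\beta_0]$ for a constant path $c$, used in Steps 2 and 3. Under the convention that $\beta_0$ is constant near $1$, the path $c \vee \beta_0$ is a reparameterization of $\beta_0$ by a map that is stationary on an interval. Such a map is not a diffeomorphism of $[0,1]$. I would handle this by exhibiting both $c \vee \beta_0$ and $\beta_0$ as reparameterizations of a common path by endpoint-fixing diffeomorphisms, using that $\beta_0$ is already constant near $1$. Alternatively, I would note that $c = c^{1/2\to 1} \vee c^{0\to 1/2}$ is a retrace of the constant path. A secondary point is matching the paper's notion of $\hor_A$ (defined via \cite{PIZ_Diffeology}) to the characterization in Step 1.
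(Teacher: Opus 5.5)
Your proposal is correct and follows the same route as the paper: you check that $\bar\alpha_{\beta_0}$ lifts $\alpha$ through $[\beta_0]$ at $t_0$, compute $\Theta_{t_0}\bar\alpha_{\beta_0}=\bar\alpha_{\beta_0}$ directly from Definition~\ref{defUniversalConnection}, and conclude by uniqueness of horizontal lifts. The paper takes three things for granted that you make explicit: uniqueness of horizontal lifts (your Step~1, via Lemma~\ref{claimCoveringPlotsPrincipal}, equivariance and projection), smoothness of $\bar\alpha_{\beta_0}$, and the identity $[c\vee\beta_0]=[\beta_0]$, which does hold via an endpoint-fixing diffeomorphism because $\beta_0$ is constant near $1$.
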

\begin{proof}
    Since
    \[
    \hor_\Theta(\alpha, t_0, [\beta_0])(t_0) = [\beta_0] = \bar \alpha_{\beta_0}(t_0)
    \]
    and $\tau \circ \hor_\Theta(\alpha, t_0, [\beta_0]) = \alpha = \tau \circ \bar \alpha_{\beta_0}$, we need only show that $\bar \alpha_{\beta_0}$ is a $\Theta$-horizontal path in $\mathcal F_\bullet(X)$. Indeed, for each $s \in \dom(\alpha)$,
    \begin{align*}
        \Theta_{t_0}(\bar \alpha_{\beta_0})(s) &= \big[ (\tau \circ \bar \alpha_{\beta_0})^{t_0 \to s} \vee \bar \alpha_{\beta_0}(t_0) \big] \\
        &= \big[ \alpha^{t_0 \to s} \vee \beta_0 \big] \\
        &= \bar \alpha_{\beta_0}(s).
    \end{align*}
    Since $\hor_\Theta(\alpha, t_0, [\beta_0])$ and $\bar \alpha_{\beta_0}$ are horizontal paths which agree at a point (namely, $s = t_0$), they must coincide.

\end{proof}

\begin{remark}
    Proposition \ref{claimHorizontalLiftOfUniversalConnection} says the lifting of Singer in \cite{Singer} is horizontal with respect to the diffeological connection $\Theta$ of Definition \ref{defUniversalConnection}.
\end{remark}
\section{Applications of the Diffeological Universal Connection}
\label{secKobayashi}
In this section, we apply the constructions of Section \ref{secPathSpacesAndUniversalConnection} to state and prove Kobayashi's theorem for Diffeology (Theorem \ref{thmKobayashi}) along with some related results.

\subsection{Bundle Reconstruction from a Holonomy Representation}

Let $(X, \bullet)$ be a path-connected, pointed diffeological space. In Section \ref{secPathSpacesAndUniversalConnection}, we described the pointed path-bundle $\tau : \mathcal F_\bullet(X) \to X$ and the universal connection $\Theta : \hatP \mathcal F_\bullet(X) \to \mathcal P\mathcal F_\bullet(X)$. Suppose now that a smooth group homomorphism $H : \Omega_\bullet(X) \to G$ is given, where $G$ is a diffeological group. Then we have the associated bundle 
\[
\tau_H : \mathcal F_\bullet(X) \times_H G \to X,\quad \text{where } \tau_H(\llbracket [\alpha], g \rrbracket) = \tau([\alpha]) = \alpha(1),
\]
which is a principal $G$-bundle (Proposition \ref{claimAssociatedBundleIsPrincipalBundle}). The following two lemmas says that the universal connection $\Theta$ carries over to the associated bundle:
\begin{lemma}
    \label{lemAssociatedUniversalConnection}
    Define $\Theta^H : \hatP(\mathcal F_\bullet(X) \times_H G) \to \mathcal P(\mathcal F_\bullet(X) \times_H G)$ by
    \[
    \Theta^H_r\llbracket[\tilde \alpha], \rho\rrbracket(s) = \llbracket\Theta_r[\tilde \alpha](s), \rho(r)\rrbracket,
    \]
    where $\llbracket\tilde \alpha, \rho\rrbracket$ denotes the path $s \mapsto \llbracket[\tilde \alpha(s)], \rho(s)\rrbracket$ in $\mathcal F_\bullet(X) \times_H G$ for paths $[\tilde \alpha] \in \mathcal P \mathcal F_\bullet(X)$ and $\rho \in \mathcal PG$. Then $\Theta^H$ is a diffeological connection on $\tau_H : \mathcal F_\bullet(X) \times_H G \to X$.
\end{lemma}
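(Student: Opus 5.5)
The plan has three parts. First show that $\Theta^H$ is well defined on $\hatP(\mathcal F_\bullet(X)\times_H G)$. Then show that it is smooth. Finally check conditions (i)--(vi) of Definition \ref{defConnection}, each by reduction to the corresponding property of $\Theta$ in Theorem \ref{claimUniversalConnectionIsDiffeologicalConnection}.

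\emph{Well-definedness.} Not every path in the associated bundle is given globally as $s\mapsto\llbracket[\tilde\alpha(s)],\rho(s)\rrbracket$. So I would first use the quotient diffeology together with the plot trivializations from Proposition \ref{claimEquivariantTrivializations}, applied to $\tau$. These show that a smooth path in $\mathcal F_\bullet(X)\times_H G$ lifts, locally in $s$, to a pair of smooth paths $([\tilde\alpha],\rho)$. Because $\Theta_r[\tilde\alpha]$ is determined by $\tau\circ[\tilde\alpha]$ and the single point $[\tilde\alpha(r)]$, the formula only uses $[\tilde\alpha(r)]$, $\rho(r)$ and the base path $\alpha=\tau_H\circ(\text{path})$. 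Explicitly,
\[
\Theta^H_r\llbracket[\tilde\alpha],\rho\rrbracket(s)=\llbracket[\alpha^{r\to s}\vee\tilde\alpha(r)],\rho(r)\rrbracket .
\]
This expression makes sense globally on the domain. Local lifts can then be patched by working with a lift near $r$ and propagating, since the right-hand side depends only on global base data and the value at $r$.

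Next, take two lifts of the same point, $([\tilde\alpha(r)],\rho(r))$ and $([\tilde\alpha(r)\vee\gamma],H(\gamma)^{-1}\rho(r))$ with $[\gamma]\in\Omega_\bullet(X)$. The concatenation $[\alpha^{r\to s}\vee\tilde\alpha(r)\vee\gamma]$ is just the first class acted on by $[\gamma]$. Hence the two classes $\llbracket\cdot,\cdot\rrbracket$ agree, which is exactly the $G$-equivariance property (iv) of $\Theta$ applied with a constant path $\rho$.

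\emph{Smoothness.} Let $u\mapsto(\llbracket[\tilde\alpha_u],\rho_u\rrbracket,r_u)$ be a plot. Locally it lifts to plots $u\mapsto([\tilde\alpha_u],r_u)$ in $\hatP\mathcal F_\bullet(X)$ and $u\mapsto\rho_u$ in $\mathcal PG$. I expect this lifting to be the main obstacle. It needs the local lifting property of quotient plots, in a version compatible with the functional diffeology on paths, uniformly in $(u,s)$. I would obtain it from Lemma \ref{claimCoveringPlotsPrincipal} and the equivariant trivializations applied to the plot $(u,s)\mapsto$ (path value).

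Once the lift exists, $(u,s)\mapsto\Theta_{r_u}[\tilde\alpha_u](s)$ is smooth by Theorem \ref{claimUniversalConnectionIsDiffeologicalConnection}, and $u\mapsto\rho_u(r_u)$ is smooth. Composing with the quotient map $\mathcal F_\bullet(X)\times G\to\mathcal F_\bullet(X)\times_H G$ then gives a plot.

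\emph{The axioms.} Domain (i) is immediate from the definition.

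Lifting (ii) holds because $\tau_H\llbracket\Theta_r[\tilde\alpha](s),\rho(r)\rrbracket=\tau(\Theta_r[\tilde\alpha](s))=\tau([\tilde\alpha(s)])$.

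Basepoint (iii) follows from the basepoint property of $\Theta$.

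For $G$-equivariance (iv), let $\sigma\in\mathcal PG$. Then $\llbracket[\tilde\alpha],\rho\rrbracket\cdot\sigma=\llbracket[\tilde\alpha],\rho\sigma\rrbracket$, and evaluating at $r$ gives the factor $\sigma(r)$.

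For reparameterization (v), use $\llbracket[\tilde\alpha],\rho\rrbracket\circ f=\llbracket[\tilde\alpha]\circ f,\rho\circ f\rrbracket$ together with (v) for $\Theta$.

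For projection (vi), note that $\Theta^H_r\llbracket[\tilde\alpha],\rho\rrbracket=\llbracket\Theta_r[\tilde\alpha],\text{const}_{\rho(r)}\rrbracket$. Applying $\Theta^H_r$ again gives $\llbracket\Theta_r\Theta_r[\tilde\alpha],\rho(r)\rrbracket$, and this equals the original by (vi) for $\Theta$.
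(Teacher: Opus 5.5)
Your proposal takes the same route as the paper, which dismisses this lemma as ``straightforward definition-checks'' reducing each axiom to the corresponding property of $\Theta$. Your verification of (i)--(vi) and of independence of the representative is correct and more careful than the paper. The one soft spot is the lift you flag in the smoothness step: equivariant plot trivializations only give lifts on small $(u,s)$-neighbourhoods, not of whole paths. You can bypass this with your own formula $\llbracket[\alpha^{r\to s}\vee\tilde\alpha(r)],\rho(r)\rrbracket$, which for a plot $u\mapsto(\eta_u,r_u)$ only requires locally lifting the single plot $u\mapsto\eta_u(r_u)$ to $\mathcal F_\bullet(X)\times G$. Alternatively, Singer's horizontal lift together with Lemma~\ref{claimCoveringPlotsPrincipal} shows that every path in $\mathcal F_\bullet(X)\times_H G$ does lift globally to a pair $([\tilde\alpha],\rho)$, which also justifies the notation used in the statement.
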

\begin{lemma}
    \label{lemAssociatedHorizontalLift}
    For a path $\alpha : (a, b) \to X$, an initial time $t_0 \in (a,b)$, and an initial point $\llbracket [\beta_0], g_0 \rrbracket \in \mathcal F_\bullet(X) \times_H G$, we have
    \[
    \hor_{\Theta^H}(\alpha, t_0, \llbracket [\beta_0], g_0 \rrbracket)(s) = \llbracket \bar \alpha_{\beta_0}(s), g_0 \rrbracket = \llbracket [\alpha^{t_0 \to s} \vee \beta_0], g_0 \rrbracket.
    \]
\end{lemma}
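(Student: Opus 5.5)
We will follow the same pattern as the proof of Proposition \ref{claimHorizontalLiftOfUniversalConnection}, now for the connection $\Theta^H$ of Lemma \ref{lemAssociatedUniversalConnection}. Define the candidate path $\bar\alpha^H : (a,b) \to \mathcal F_\bullet(X) \times_H G$ by $\bar\alpha^H(s) = \llbracket \bar\alpha_{\beta_0}(s), g_0\rrbracket$. This is the path $\llbracket \tilde\alpha, \rho\rrbracket$ in the notation of Lemma \ref{lemAssociatedUniversalConnection}, with $[\tilde\alpha] = \bar\alpha_{\beta_0}$ and $\rho$ the constant path at $g_0$.

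First we check that $\bar\alpha^H$ is a smooth path. By Proposition \ref{claimHorizontalLiftOfUniversalConnection}, $\bar\alpha_{\beta_0}$ equals $\hor_\Theta(\alpha,t_0,[\beta_0])$, so it is a plot for $\mathcal F_\bullet(X)$. Constants are plots for $G$. Hence $s \mapsto ([\alpha^{t_0\to s}\vee\beta_0], g_0)$ is a plot for the product, and its image in the quotient $\mathcal F_\bullet(X)\times_H G$ is a plot by definition of the quotient diffeology.

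Next we verify the elementary conditions:
\begin{itemize}
\item \emph{Basepoint}: $\bar\alpha^H(t_0) = \llbracket [\beta_0], g_0\rrbracket$, since $\alpha^{t_0\to t_0}$ is constant and concatenating a constant path does not change the retrace class.
\item \emph{Projection}: $\tau_H(\bar\alpha^H(s)) = \tau(\bar\alpha_{\beta_0}(s)) = \alpha(s)$, so $\bar\alpha^H$ lifts $\alpha$.
\end{itemize}

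The key step is horizontality. By the definition of $\Theta^H$,
\[
\Theta^H_{t_0}\llbracket \tilde\alpha,\rho\rrbracket(s) = \llbracket \Theta_{t_0}(\bar\alpha_{\beta_0})(s), \rho(t_0)\rrbracket = \llbracket \bar\alpha_{\beta_0}(s), g_0\rrbracket = \bar\alpha^H(s).
\]
The middle equality is exactly the horizontality computation in the proof of Proposition \ref{claimHorizontalLiftOfUniversalConnection}. So $\bar\alpha^H$ is $\Theta^H$-horizontal in the sense of Definition \ref{defHorizontalPath}.

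We then conclude by uniqueness. Both $\bar\alpha^H$ and $\hor_{\Theta^H}(\alpha,t_0,\llbracket[\beta_0],g_0\rrbracket)$ are $\Theta^H$-horizontal, lie over $\alpha$, and agree at $s=t_0$. Horizontal lifts through a point are unique, as used in Proposition \ref{claimHorizontalLiftOfUniversalConnection}, so the two paths coincide. For completeness one can prove this uniqueness directly. If $\tilde\beta$ is any horizontal path over $\alpha$ with $\tilde\beta(t_0)=\bar\alpha^H(t_0)$, the projection property together with the lifting and basepoint axioms gives $A_{t_0}\tilde\beta = A_{t_0}(A_{t_0}\tilde\beta)$. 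Moreover, $A_{t_0}\tilde\beta$ depends only on the projected path $\alpha$ and the point $\tilde\beta(t_0)$, through the lifting function $\hor_A$ quoted from \cite{PIZ_Diffeology}.

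The main obstacle is well-definedness. The formula for $\Theta^H$ is written in terms of representatives $(\tilde\alpha,\rho)$, and the lemma's formula is written in terms of the particular representative $([\beta_0],g_0)$. One should therefore confirm that replacing $([\beta_0],g_0)$ by $([\beta_0\vee\gamma], H(\gamma)^{-1}g_0)$ gives the same right-hand side. This follows because
\[
[\alpha^{t_0\to s}\vee\beta_0\vee\gamma] = \bar\alpha_{\beta_0}(s)\vee[\gamma].
\]
Multiplying by $[\gamma]$ on the first factor is then absorbed by the factor $H(\gamma)^{-1}$ in the associated-bundle relation. This is essentially the $G$-equivariance of $\Theta$ from Theorem \ref{claimUniversalConnectionIsDiffeologicalConnection}, applied with a constant loop path.
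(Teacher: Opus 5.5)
Your proposal is correct and follows essentially the route the paper intends; the paper only calls the proof a straightforward definition check, modeled on Proposition \ref{claimHorizontalLiftOfUniversalConnection}: use the global lift $(\bar\alpha_{\beta_0}, g_0)$ with constant second component, compute $\Theta^H_{t_0}$ on it via $\Theta_{t_0}\bar\alpha_{\beta_0}=\bar\alpha_{\beta_0}$, and conclude by uniqueness of horizontal lifts through a point. As minor remarks, your well-definedness paragraph is redundant, because the argument works for any representative $([\beta_0],g_0)$ and the left-hand side depends only on the point; and the uniqueness you sketch really rests on the $G$-equivariance axiom together with Lemma \ref{claimCoveringPlotsPrincipal} rather than on the projection axiom, since two lifts of $\alpha$ agreeing at $t_0$ differ by a smooth path $\rho$ in $G$ with $\rho(t_0)=e$.
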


The proof of these are straightforward definition-checks. We now have Kobayashi's theorem for diffeological principal bundles:
\begin{theorem}
    [Diffeological Kobayashi Theorem]
    \label{thmKobayashi}
    Let $(X, \bullet)$ be a path-connected, pointed diffeological space. Given a smooth homomorphism $H : \Omega_\bullet(X) \to G$, where $G$ is a diffeological group, there exists a diffeological principal $G$-bundle over $X$ equipped with a diffeological connection whose holonomy at some point is $H\big(\Omega_\bullet (X)\big)$.
\end{theorem}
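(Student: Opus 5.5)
The bundle will be the associated bundle $\tau_H : \mathcal F_\bullet(X) \times_H G \to X$, which is a diffeological principal $G$-bundle by Proposition \ref{claimAssociatedBundleIsPrincipalBundle}. The connection will be $\Theta^H$, which is a diffeological connection by Lemma \ref{lemAssociatedUniversalConnection}. For the base point I take $\xi_\bullet := \llbracket [c_\bullet], e \rrbracket$, where $c_\bullet$ is the constant path at $\bullet$; its class is the identity of $\Omega_\bullet(X)$. It remains to show $H^{\Theta^H}_{\xi_\bullet} = H$ as maps $\Omega_\bullet(X) \to G$. Taking images then gives $\Hol_{\xi_\bullet}(\Theta^H) = H(\Omega_\bullet(X))$.

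To compute the holonomy, fix a loop $\gamma \in \mathcal L(X, \bullet)$. By Proposition \ref{claimHolonomyRetraceEquivalence} we may regard $[\gamma] \in \Omega_\bullet(X)$ and take $\gamma$ constant near $0$ and $1$, extended to $\R$. Lemma \ref{lemAssociatedHorizontalLift} with $t_0 = 0$ and $\beta_0 = c_\bullet$ gives
\[
\hor_{\Theta^H}(\gamma, 0, \xi_\bullet)(1) = \llbracket [\gamma^{0\to 1} \vee c_\bullet], e \rrbracket = \llbracket [\gamma], e \rrbracket .
\]
Here $\gamma^{0 \to 1} = \gamma\upharpoonright[0,1]$, and concatenating with a constant path is a reparameterization, so it does not change the class in $\mathcal F(X)$.

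The next step uses the defining relation of $\mathcal F_\bullet(X) \times_H G$, namely $\llbracket p \vee \omega, h \rrbracket = \llbracket p, H(\omega) h \rrbracket$ for $\omega \in \Omega_\bullet(X)$. This follows from $(p,h)\omega = (p\vee\omega, H(\omega)^{-1}h)$ after replacing $h$ by $H(\omega)h$. Writing $[\gamma] = [c_\bullet] \vee [\gamma]$, we get
\[
\llbracket [\gamma], e \rrbracket = \llbracket [c_\bullet], H([\gamma]) \rrbracket = \llbracket [c_\bullet], e \rrbracket \cdot H([\gamma]) = \xi_\bullet \cdot H([\gamma]).
\]
The $G$-action on the associated bundle is right multiplication in the second slot. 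By Definition \ref{defHolonomyRepresentation} and freeness of the action, this gives $H^{\Theta^H}_{\xi_\bullet}(\gamma) = H([\gamma])$.

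The main point needing care is the side and order convention in the associated bundle. One must check that the relation $\llbracket p\vee\omega, h\rrbracket = \llbracket p, H(\omega)h\rrbracket$ carries a right action by $\Omega_\bullet(X)$ on the first factor to left multiplication by $H(\omega)$ on $G$, so that the result is $H$ itself and not $H$ composed with inversion. If the convention did give an inverse or conjugate, the image $H(\Omega_\bullet(X))$ would still be the same subgroup, since $\Omega_\bullet(X)$ is a group. So the theorem as stated, about the holonomy group, holds in either case. A minor remaining check is that $\gamma^{0\to1}\vee c_\bullet$ and $\gamma$ are identified in $\mathcal F(X)$. This holds by an endpoint-fixing reparameterization, using the convention that representatives are constant near their endpoints.
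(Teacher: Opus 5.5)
Your proposal is correct and follows the paper's proof. You use the same bundle $\tau_H$, connection $\Theta^H$ and basepoint $\llbracket [c_\bullet], e\rrbracket$, and the same computation via Lemma \ref{lemAssociatedHorizontalLift} and the associated-bundle relation; the only cosmetic differences are that you write $[\gamma]=[c_\bullet]\vee[\gamma]$ where the paper uses $[\gamma]\vee[\gamma]^{-1}$, and that you prove $H^{\Theta^H}_{\xi_\bullet}=H$ outright and take images rather than arguing the two inclusions separately.
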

\begin{proof}
    The desired principal $G$-bundle and connection are $\tau_H : \mathcal F_\bullet(X) \times_H G \to X$ and $\Theta^H$. Let us compute the holonomy of $\Theta^H$ at the point $\llbracket \bullet, e \rrbracket \in \mathcal F_\bullet(X) \times_H G$, where the $\bullet$ in $\llbracket \bullet, e \rrbracket$ is the constant path at the point $\bullet$ and $e \in G$ is the neutral element.

    For $[\gamma] \in \Omega_\bullet(X)$,
    \begin{align*}
        \hor_{\Theta^H}(\gamma, 0, \llbracket\bullet, e\rrbracket)(1) &= \llbracket\bar \gamma_\bullet(1), e\rrbracket \\
        &= \llbracket [\gamma], e\rrbracket \\
        &= \llbracket [\gamma] \vee [\gamma]\inverse, H([\gamma])\rrbracket \\
        &= \llbracket \bullet, e\rrbracket\cdot H([\gamma]),
    \end{align*}
    which shows $H(\Omega_\bullet(X)) \se \Hol_{\llbracket\bullet, e\rrbracket}(\Theta^H)$. On the other hand, any $g \in \Hol_{\llbracket\bullet, e\rrbracket}(\Theta^H)$ satisfies the equation
    \[
    \hor_{\Theta^H}(\sigma, 0, \llbracket\bullet, e\rrbracket)(1) = \llbracket\bullet, e\rrbracket\cdot g
    \]
    for some $\sigma \in \mathcal L(X, \bullet)$, and the $\Omega_\bullet(X)$-action being free implies $g = H([\sigma \upharpoonright [0,1]]) \in H(\Omega_\bullet(X))$.

\end{proof}

The following theorem roughly says the Singer's path-bundle construction is invertible, which also explains the choice of the word ``reconstruction.''

\begin{theorem}
    \label{thmReconstruction}
    Let $\pi : (E, \xi_\bullet) \to (X, \bullet)$ be a diffeological principal $G$-bundle. Given a connection $A : \hatP E \to \mathcal{P} E$, there is a $G$-equivariant diffeomorphism
    \[
    \Phi : \mathcal F_\bullet(X) \times_H G \to E,
    \]
    such that $\pi \circ \Phi = \tau_H$, where $H := H^A_{\xi_\bullet}$ is the holonomy representation of $A$. Moreover, the diffeomorphism $\Phi$ takes $\Theta^H$-horizontal paths to $A$-horizontal paths, in that the following diagram commutes:
    \[
    \begin{tikzcd}
        \hatP(\mathcal F_\bullet(X)\times_H G) \arrow{r} \arrow[swap]{d}{\Theta^H} & \hatP E \arrow{d}{A} \\
        \mathcal P(\mathcal F_\bullet(X) \times_H G) \arrow{r} & \mathcal P E 
    \end{tikzcd}
    \]
    That is,
    \[
    \Phi \circ \Theta^H_r\llbracket [\tilde \alpha], \rho\rrbracket = A_r(\Phi\circ \llbracket [\tilde \alpha], \rho\rrbracket),
    \]
    for every initialized path $(\llbracket [\tilde \alpha], \rho\rrbracket, r) \in \hatP(\mathcal F_\bullet(X) \times_H G)$.
\end{theorem}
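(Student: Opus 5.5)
We define $\Phi$ directly by horizontal transport from the base point:
\[
\Phi\llbracket [\alpha], g\rrbracket = \hor_A(\alpha, 0, \xi_\bullet)(1)\cdot g,
\]
where $\alpha$ is a representative constant near its endpoints, extended smoothly to an open interval so that $(\alpha, 0, \xi_\bullet) \in \ev_X^*(E)$.

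\textbf{Well-definedness.} The horizontal endpoint is invariant under reparameterization $f$ fixing $0,1$, by the reparameterization property of Proposition \ref{propConnectionProperties}. It is invariant under inserting a retrace $\eta \vee \eta^{\leftarrow}$, by the concatenation property together with uniqueness of horizontal lifts: the lift of $\eta^\leftarrow$ followed by $\eta$ returns to its starting point. Hence the endpoint depends only on $[\alpha] \in \mathcal F_\bullet(X)$, exactly as in Proposition \ref{claimHolonomyRetraceEquivalence}. For the quotient by $\Omega_\bullet(X)$, concatenation and $G$-equivariance give
\[
\hor_A(\alpha\vee\gamma,0,\xi_\bullet)(1) = \hor_A(\alpha,0,\xi_\bullet H(\gamma))(1) = \hor_A(\alpha,0,\xi_\bullet)(1)\,H(\gamma),
\]
so $\Phi\llbracket[\alpha\vee\gamma], H(\gamma)^{-1}g\rrbracket = \Phi\llbracket[\alpha],g\rrbracket$.

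\textbf{Equivariance and fibre preservation.} $G$-equivariance is immediate from the formula, and $\pi\circ\Phi = \tau_H$ holds by the lifting property.

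\textbf{Smoothness.} A plot of the associated bundle locally lifts to a plot $r\mapsto(\alpha_r,g_r)$ of $\smooth([0,1],X)\times G$. The map $r\mapsto(\alpha_r,0,\xi_\bullet)$ is a plot of $\ev_X^*(E)$, after extending the $\alpha_r$ uniformly by constants. Smoothness of $\hor_A$ and of evaluation at $1$ then shows that $\Phi$ is smooth.

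\textbf{Bijectivity.} For surjectivity, take $\xi\in E$ and use path-connectedness to choose $\alpha$ from $\bullet$ to $\pi(\xi)$. The endpoint $\hor_A(\alpha,0,\xi_\bullet)(1)$ lies in the fibre of $\xi$, so it differs from $\xi$ by some $g$, and $\xi = \Phi\llbracket[\alpha],g\rrbracket$. For injectivity, suppose $\Phi\llbracket[\alpha],g\rrbracket=\Phi\llbracket[\beta],h\rrbracket$. Then $\alpha(1)=\beta(1)$, and $\gamma=\beta^\leftarrow\vee\alpha$ is a loop. Lifting $\gamma$ via concatenation gives $H(\gamma)=h g^{-1}$, and freeness of the action is used here. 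Since $[\alpha] = [\beta]\vee[\gamma]$, we get $\llbracket[\alpha],g\rrbracket=\llbracket[\beta],H(\gamma)g\rrbracket = \llbracket[\beta],h\rrbracket$.

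\textbf{Smoothness of the inverse (main obstacle).} I would argue locally on a plot $r\mapsto\xi_r$ near $r_0$. Lemma \ref{lemRadialPaths} gives radial paths $\alpha_r$ from $x_{r_0}=\pi(\xi_{r_0})$ to $\pi(\xi_r)$. Fix a path $\lambda$ from $\bullet$ to $x_{r_0}$ and set $\beta_r=\alpha_r\vee\lambda$, a plot of $\smooth([0,1],X)$. The map $r\mapsto\eta_r:=\hor_A(\beta_r,0,\xi_\bullet)(1)$ is a plot of $E$ covering the same base plot as $\xi_r$. Lemma \ref{claimCoveringPlotsPrincipal} then yields a plot $r\mapsto g_r$ in $G$ with $\xi_r=\eta_r g_r$. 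Hence $\Phi^{-1}(\xi_r)=\llbracket[\beta_r],g_r\rrbracket$ is locally a plot.

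\textbf{Intertwining the connections.} By Lemma \ref{lemAssociatedHorizontalLift}, $\Theta^H$-horizontal paths have the form $s\mapsto\llbracket[\alpha^{t_0\to s}\vee\beta_0],g_0\rrbracket$. Their images under $\Phi$ are
\[
s\mapsto \hor_A(\alpha^{t_0\to s}\vee\beta_0,0,\xi_\bullet)(1)\,g_0 = \hor_A(\alpha,t_0,\Phi\llbracket[\beta_0],g_0\rrbracket)(s),
\]
using concatenation, reparameterization and equivariance. So $\Phi$ maps horizontal paths to horizontal paths. Now write $\Theta^H_r\llbracket[\tilde\alpha],\rho\rrbracket$ as the horizontal lift through its value at $r$. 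Its image under $\Phi$ is then the $A$-horizontal lift of $\pi\circ\Phi\circ\llbracket[\tilde\alpha],\rho\rrbracket$ through $\Phi\llbracket[\tilde\alpha(r)],\rho(r)\rrbracket$. That lift is exactly $A_r(\Phi\circ\llbracket[\tilde\alpha],\rho\rrbracket)$, by the basepoint, lifting and projection axioms together with uniqueness of horizontal lifts. This gives the commuting diagram.
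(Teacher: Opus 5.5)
Your proposal is correct and follows essentially the same route as the paper. It uses the same definition $\Phi\llbracket[\alpha],g\rrbracket=\hor_A(\alpha,0,\xi_\bullet)(1)\cdot g$, the same loop $\beta^{\leftarrow}\vee\alpha$ with holonomy $hg^{-1}$ to show $\Phi^{-1}$ is well defined, radial paths together with Lemma \ref{claimCoveringPlotsPrincipal} to show $\Phi^{-1}$ is smooth, and the concatenation, reparameterization and equivariance properties (plus uniqueness of horizontal lifts) to match $\Phi\circ\Theta^H_r$ with $A_r(\Phi\circ\cdot)$; you are just more explicit, for example in checking invariance of the endpoint under reparameterizations and retraces.
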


\begin{proof}
    Define $\Phi : \mathcal F_\bullet(X) \times_H G \to E$ by
    \[
    \Phi(\llbracket [\alpha], g\rrbracket) = \hor_A(\alpha, 0, \xi_\bullet)(1)\cdot g.
    \]
    Then $\Phi$ is a well-defined map, for
    \begin{align*}
        \Phi(\llbracket [\alpha] \vee [\gamma], H([\gamma])\inverse g\rrbracket) &= \hor_A(\alpha \vee \gamma, 0, \xi_\bullet)(1)\cdot H([\gamma])\inverse g \\
        &= \big( \hor_A(\alpha, 0, \xi_\bullet H(\gamma)) \vee \hor_A(\gamma, 0, \xi_\bullet) \big)(1) \\
        &\quad \times H([\gamma])\inverse g \\
        &= \hor_A(\alpha, 0, \xi_\bullet)(1)H([\gamma])\cdot H([\gamma])\inverse g \\
        &= \Phi(\llbracket [\alpha], g\rrbracket).
    \end{align*}
    Now a plot $r \mapsto \llbracket \alpha_r, g_r \rrbracket$ for $\mathcal F_\bullet(X)\times_H G$ locally lifts into a plot $r \mapsto ([\alpha_r], g_r)$ for $\mathcal F_\bullet(X) \times G$, and we see that $r \mapsto \Phi(\llbracket [\alpha_r], g_r\rrbracket) = \hor_A(\alpha_r, 0, \xi_\bullet)(1)g_r$ is smooth because $\hor_A$ is smooth. In other words: $\Phi : \mathcal F_\bullet(X) \times_H G \to E$ is smooth.

    The inverse of $\Phi$ is given as follows: for any $\xi \in E$, let $([\alpha], g) \in \mathcal F_\bullet(X) \times G$ be any pair satisfying $\xi = \hor_A(\alpha, 0, \xi_\bullet)(1) g$. Suppose $([\beta], h) \in \mathcal F_\bullet(X) \times G$ is another pair with $\xi = \hor_A(\beta, 0, \xi_\bullet)(1) h$. Then $[\beta^\leftarrow] \vee [\alpha]$ is a loop with 
    \begin{align*}
    \hor_A(\beta^\leftarrow \vee \alpha, 0, \xi_\bullet)(1) &= \big(\hor_A(\beta^\leftarrow, 0, \xi_\bullet g\inverse) \vee \hor_A(\alpha, 0, \xi_\bullet)\big)(1) \\
    &= \hor_A(\beta^\leftarrow, 0, \xi_\bullet)(1)\cdot g\inverse,
    \end{align*}
    since $\hor_A(\alpha, 0, \xi_\bullet)(1) = \xi_\bullet g\inverse$, by assumption. Now since 
    \[
    \hor_A(\beta, 0, \xi_\bullet)(1) = \xi h\inverse,
    \]
    we have $\hor_A(\beta^\leftarrow, 0, \xi h\inverse) = \xi_\bullet$, hence $\hor_A(\beta^\leftarrow, 0, \xi)(1) = \xi_\bullet h$. Therefore, we have $H([\beta^\leftarrow \vee \alpha]) = hg\inverse$. All this is to say that
    \begin{align*}
    \llbracket [\beta], h\rrbracket &= \llbracket [\beta] \vee [\beta^\leftarrow \vee \alpha], H([\beta^\leftarrow \vee \alpha])\inverse h\rrbracket \\
    &= \llbracket [\alpha], (hg\inverse)\inverse h\rrbracket\\
    &= \llbracket [\alpha], g\rrbracket,
    \end{align*}
    Put $\Phi\inverse(\xi) = \llbracket [\alpha], g\rrbracket$ for some pair $([\alpha], g) \in \mathcal F_\bullet(X) \times G$ satisfying $\xi = \hor_A(\alpha, 0, \xi_\bullet)(1) g$, which is well-defined by the discussion just now. Let us show that $\Phi\inverse : E \to \mathcal F_\bullet(X) \times_H G$ is a smooth map; fix a plot $r \mapsto \xi_r$ for $E$. As in the proof of Proposition \ref{claimPrincipalOmegaBundle}, we can choose $[\alpha_r] \in \mathcal F_\bullet(X)$ with $\alpha_r(1) = \pi(\xi_r)$ depending smoothly on $r$. Since $\hor_A$ is smooth, there exists (by Lemma \ref{claimCoveringPlotsPrincipal}) a plot $r \mapsto g_r$ for $G$ such that $\hor_A(\alpha_r(1), 0, \xi_\bullet)(1)g_r = \xi_r$. But this implies $r \mapsto \Phi\inverse(\xi_r) = \llbracket [\alpha_r], g_r\rrbracket$ is smooth. Therefore, $\Phi : \mathcal F_\bullet(X) \times_H G \to E$ is diffeomorphism.

    Let us now show that $\Phi$ takes the connection $A$ on $\pi : E \to X$ to the universal connection $\Theta^H$ on $\tau_H : \mathcal F_\bullet(X) \times_H G \to X$. Indeed,
    \begin{align*}
        A_r(\Phi\circ \llbracket [\tilde \alpha], \rho\rrbracket)(s) &= \hor_A\big(\pi \circ \Phi \circ \llbracket [\tilde \alpha], \rho\rrbracket, r, \Phi(\llbracket [\tilde \alpha(r)], \rho(r)\rrbracket)\big)(s) \\
        &= \hor_A \big( \tau_H\circ \llbracket [\tilde \alpha], \rho\rrbracket, r, \hor_A(\tilde \alpha(r), 0, \xi_\bullet)(1)\big)(s) \cdot \rho(r),
    \end{align*}
    and, on the other hand,
    \begin{align*}
        \Phi(\Theta^H_r\llbracket[\tilde \alpha], \rho\rrbracket(s)) &= \Phi((\tau \circ [\tilde \alpha])^{r \to s} \vee \tilde \alpha(r), \rho(r)) \\
        &= \hor_A\big( (\tau \circ [\tilde \alpha])^{r \to s} \vee \tilde \alpha(r), 0, \xi_\bullet \big)(1)\cdot \rho(r) \\
        &= \hor_A\big((\tau \circ [\tilde \alpha])^{r \to s}, 0, \hor_A(\tilde \alpha(r), 0, \xi_\bullet)(1)\big)(1)\cdot \rho(r)\\
        &= \hor_A\big(\tau_H \circ \llbracket [\tilde \alpha], \rho\rrbracket \circ f, 0, \hor_A(\tilde \alpha(r), 0, \xi_\bullet)(1)\big)(1) \cdot \rho(r) \\
        &= \hor_A\big(\tau_H \circ \llbracket[\tilde \alpha], \rho\rrbracket, r, \hor_A(\tilde \alpha(r), 0, \xi_\bullet)(1)\big)(s) \cdot \rho(r)
    \end{align*}
    where $f(t) = ts + (1 - t)(r)$, and we have used everything in Proposition \ref{propConnectionProperties}. It follows that
    \[
    A_r(\Phi\circ \llbracket [\tilde \alpha], \rho\rrbracket)(s) = \Phi(\Theta^H_r\llbracket [\tilde \alpha], \rho\rrbracket(s)).
    \]

\end{proof}

\begin{corollary}
    \label{corClassification}
    Let $\pi : E \to X$ and $\pi' : E' \to X$ be principal $G$-bundles with connections $A$ and $A'$. Suppose that there are points $\xi \in \pi\inverse(\bullet)$ and $\xi' \in (\pi')\inverse(\bullet)$ such that the holonomy maps $H := H_{\xi}^{A}, H' := H^{A'}_{\xi'} : \Omega_\bullet(X) \to G$ are conjugate, i.e. there is a $g \in G$ such that $H([\gamma]) = g H'([\gamma])g\inverse$ for all $[\gamma] \in \Omega_\bullet(X)$. Then there is a $G$-bundle isomorphism $\Psi : E \to E'$ for which 
    \[
    \Psi \circ A_r \tilde \alpha = A'_r(\Psi\circ \tilde \alpha), \quad \text{for all } (\tilde \alpha, r) \in \hatP E.
    \]
\end{corollary}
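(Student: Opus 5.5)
The plan is to reduce the corollary to two applications of Theorem \ref{thmReconstruction} and one explicit isomorphism between the two model bundles. First I would use the conjugating element $g$ to pass from $H'$ to $H$ by moving the base point in $E'$. Put $\eta := \xi' g\inverse \in (\pi')\inverse(\bullet)$. By $G$-equivariance of $\hor_{A'}$ (Proposition \ref{propConnectionProperties}), for every $[\gamma] \in \Omega_\bullet(X)$ we get
\[
\hor_{A'}(\gamma, 0, \eta)(1) = \hor_{A'}(\gamma, 0, \xi')(1)\, g\inverse = \xi' H'([\gamma]) g\inverse = \eta\, g H'([\gamma]) g\inverse = \eta\, H([\gamma]).
\]
Hence $H^{A'}_{\eta} = H$. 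This reduces the problem to two pointed bundles, $(E,\xi)$ and $(E',\eta)$, whose holonomy representations are literally equal.

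Next I would apply Theorem \ref{thmReconstruction} twice. It gives a $G$-equivariant diffeomorphism $\Phi : \mathcal F_\bullet(X) \times_H G \to E$ with $\pi \circ \Phi = \tau_H$ and $\Phi \circ \Theta^H_r(\cdot) = A_r(\Phi \circ \cdot)$. It also gives $\Phi' : \mathcal F_\bullet(X) \times_H G \to E'$ with the analogous properties for $A'$, built from the base point $\eta$. Then I set $\Psi := \Phi' \circ \Phi\inverse : E \to E'$. This map is a diffeomorphism and is $G$-equivariant, being a composite of such maps. It covers the identity because $\pi' \circ \Phi' \circ \Phi\inverse = \tau_H \circ \Phi\inverse = \pi$. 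So $\Psi$ is a $G$-bundle isomorphism.

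For the connection statement, take $(\tilde\alpha, r) \in \hatP E$. Since $\Phi$ is a diffeomorphism, $\Phi\inverse \circ \tilde\alpha$ is a smooth path in $\mathcal F_\bullet(X) \times_H G$, and I would write $\tilde\beta := \Phi\inverse \circ \tilde\alpha$. Applying the intertwining identity of Theorem \ref{thmReconstruction} for $\Phi$ gives $A_r\tilde\alpha = \Phi \circ \Theta^H_r \tilde\beta$. Applying it for $\Phi'$ then gives
\[
\Psi \circ A_r\tilde\alpha = \Phi' \circ \Theta^H_r\tilde\beta = A'_r(\Phi' \circ \tilde\beta) = A'_r(\Psi \circ \tilde\alpha).
\]

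The main obstacle is a technical one. Theorem \ref{thmReconstruction} states the intertwining only for paths written in the form $\llbracket [\tilde\alpha], \rho \rrbracket$, that is, paths that lift globally to $\mathcal F_\bullet(X) \times G$. An arbitrary smooth path $\tilde\beta$ in the quotient lifts only locally, by the definition of the quotient diffeology. I would handle this using the locality built into connections: the reparameterization axiom lets one restrict $A_r$ to subintervals, and horizontal paths are determined by their value at one point. So I would check the identity on each subinterval around $r$ where a lift exists, then propagate it along overlapping subintervals, using that both sides are horizontal and agree on the overlaps. If this proves awkward, the alternative is to verify directly that $\Psi$ maps $A$-horizontal lifts to $A'$-horizontal lifts and then recover $A_r$ from $\hor_A$, as in the proof of Theorem \ref{thmReconstruction}.
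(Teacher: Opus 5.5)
Your argument is correct. It shares the paper's skeleton: apply Theorem~\ref{thmReconstruction} to each bundle and compose. The difference is where you absorb the conjugating element.

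\textbf{The paper's route.} It keeps the base points $\xi,\xi'$, so it lands in two different model bundles $\mathcal F_\bullet(X)\times_H G$ and $\mathcal F_\bullet(X)\times_{H'} G$. It must then write down an explicit map $f$ between them, and check that $f$ is well defined, $G$-equivariant, invertible, and intertwines $\Theta^H$ with $\Theta^{H'}$, before setting $\Psi=\Phi'\circ f\circ\Phi^{-1}$. What this buys is the base-independent fact that conjugate representations give isomorphic model bundle-connection pairs.

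\textbf{Your route.} You move the base point of $E'$ to $\eta=\xi' g^{-1}$. The one-line identity $H^{A'}_{\xi' g^{-1}}=gH^{A'}_{\xi'}g^{-1}$ then gives $H^{A'}_\eta=H$ exactly. A single model bundle serves both sides, so $\Psi=\Phi'\circ\Phi^{-1}$ needs no further checking.

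The two maps in fact agree. Your $\Phi'$ built at $\eta$ equals the paper's $\Phi'$ built at $\xi'$, precomposed with $\llbracket[\alpha],h\rrbracket_H\mapsto\llbracket[\alpha],g^{-1}h\rrbracket_{H'}$. The $g^{-1}$ matters: under the stated hypothesis $H=gH'g^{-1}$, the paper's $f$ (which uses $gh$) is well defined only if $H'=gHg^{-1}$. So your route also avoids a small inversion slip in the paper's well-definedness computation.

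\textbf{The global-lift technicality.} The issue you raise is genuine, although the paper passes over it: its final chain of equalities treats the intertwining relation as valid for all paths. Your subinterval-propagation patch can be made to work, but it needs extra facts about horizontal paths that you have not stated. For instance, you need that a path fixed by $A_r$ is also fixed by $A_{r'}$, so that you can restart at a new base time on each overlap.

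It is simpler to show that every smooth path $\tilde\beta$ in $\mathcal F_\bullet(X)\times_H G$ already lifts globally. Put $\alpha=\tau_H\circ\tilde\beta$ and write $\tilde\beta(r)=\llbracket[\beta_0],g_0\rrbracket$. The explicit path $s\mapsto\llbracket[\alpha^{r\to s}\vee\beta_0],g_0\rrbracket$ has the same projection as $\tilde\beta$. So Lemma~\ref{claimCoveringPlotsPrincipal} yields a smooth $\rho$ in $G$ with $\tilde\beta(s)=\llbracket[\alpha^{r\to s}\vee\beta_0],g_0\rho(s)\rrbracket$. This is exactly the form Theorem~\ref{thmReconstruction} requires, so the theorem applies to $\tilde\beta$ directly.
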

\begin{proof}
    Define a mapping $f : \mathcal F_\bullet(X) \times_{H} G \to \mathcal F_\bullet(X) \times_{H'} G$ by
    \[
    f(\llbracket [\alpha], h\rrbracket_H) = \llbracket [\alpha], gh\rrbracket_{H'},
    \]
    where $g \in G$ was the fixed group element such that $H = gH' g\inverse$. Then $f$ is well-defined, since
    \begin{align*}
        f(\llbracket [\alpha] \vee [\gamma], H([\gamma])\inverse h\rrbracket_H) &= \llbracket [\alpha \vee \gamma], gH([\gamma])\inverse h\rrbracket_{H'} \\
        &= \llbracket [\alpha \vee \gamma], gH([\gamma])\inverse g\inverse \cdot g h\rrbracket_{H'} \\
        &= \llbracket [\alpha] \vee [\gamma], H'([\gamma])\inverse gh\rrbracket_{H'} \\
        &= f(\llbracket [\alpha], h\rrbracket_H).
    \end{align*}

    It is clear that $f$ is a $G$-equivariant diffeomorphism with $f\inverse(\llbracket [\alpha], h\rrbracket_{H'}) = \llbracket [\alpha], g\inverse h\rrbracket_H$. Observe also that
    \[
    f \circ \Theta^{H} = \Theta^{H'} \circ (f \times \mathrm{id}),
    \]
    where $\Theta^{H}$ resp. $\Theta^{H'}$ is the (induced) universal connection on $\tau_{H} : \mathcal F_\bullet(X) \times_{H} G \to X$ resp. $\tau_{H'} : \mathcal F_\bullet(X) \times_{H'} G \to X$. By Theorem \ref{thmReconstruction}, there exists a $G$-equivariant diffeomorphism $\Phi : E \to \mathcal F_\bullet(X) \times_{H} G$ such that
    \[
    \Theta^{H} = \Phi\inverse \circ A \circ (\Phi \times \mathrm{id})
    \]
    and, similarly, we have $\Phi' : E' \to \mathcal F_\bullet(X) \times_{H'} G$. Then $\Psi := \Phi' \circ f \circ \Phi\inverse$ defines a $G$-equivariant diffeomorphism $E \to E'$ such that
    \begin{align*}
        A &= \Phi \circ \Theta^{H} \circ (\Phi\inverse \times \mathrm{id}) \\
        &= \Phi \circ f\inverse \circ \Theta^{H'} \circ (f \times \mathrm{id}) \circ (\Phi\inverse \times \mathrm{id}) \\
        &= \Phi \circ f\inverse \circ (\Phi')\inverse \circ A' \circ (\Phi' \times \mathrm{id}) \circ (f \times \mathrm{id}) \circ (\Phi\inverse \times \mathrm{id}) \\
        &= \Psi\inverse \circ A' \circ (\Psi \times \mathrm{id}),
    \end{align*}
    which is as desired.

\end{proof}

\subsection{A Categoric Interpretation of the Bundle Reconstruction}
\label{secCategory}
We conclude by organizing the contents of this paper into category-theoretic language. For a fixed diffeological group $G$, we have the following categories:
\begin{definition}
    \label{defHolonomyCategory}
    The \textbf{holonomy category} is the category $\Holonomy^G$ defined by the following data:
    \begin{enumerate}
        \item An object is a triple $(X, x_0, H)$, where $(X, x_0)$ is a pointed, path-connected diffeological space and $H : \Omega_{x_0}(X) \to G$ is a smooth group homomorphism.
        \item A morphism $(X, x_0, H) \to (X', x_0', H')$ is a pointed smooth map $f : (X, x_0) \to (X', x_0')$ such that $H'([f\circ \gamma]) = H([\gamma])$ for all $[\gamma] \in \Omega_{x_0}(X)$.
    \end{enumerate}
\end{definition}
\begin{definition}
    \label{defPointedBundleCategory}
    The \textbf{bundle-connection category} is the category of pointed, diffeological principal $G$-bundles equipped with a connection is the category $\BundleConnection^G$ defined by the following data:
    \begin{enumerate}
        \item An object is a pair $(\pi : (E, \xi_0) \to (X, x_0), A)$, where $\pi : (E, \xi_0) \to (X, x_0)$ is a diffeological principal $G$-bundle over the path-connected diffeological space $X$ such that $\pi(\xi_0) = x_0$, and $A : \hatP E \to \mathcal P E$ is a diffeological connection.
        \item A morphism $(\pi : (E, \xi_0) \to (X, x_0), A) \to (\pi' : (E', \xi_0') \to (X', x_0'), A')$ is a $G$-bundle morphism $(F, f)$ such that $F(\xi_0) = \xi_0'$, $f(x_0) = x_0'$, and 
        \[
        F \circ A_r\tilde \alpha = A'_r(F \circ \tilde \alpha). 
        \]
        Note that we have the following two commutative squares:
        \[
        \begin{tikzcd}
            (E, \xi_0) \arrow{r}{F} \arrow[swap]{d}{\pi} & (E', \xi_0') \arrow{d}{\pi'} & & \hatP E \arrow{r}{F_* \times \id} \arrow[swap]{d}{A} & \hatP E' \arrow{d}{A'} \\
            (X, x_0) \arrow[swap]{r}{f} & (X', x_0') & & \mathcal P E \arrow[swap]{r}{F_*} & \mathcal P E'
        \end{tikzcd}
        \]
        where $F_*(\tilde \alpha) = F \circ \tilde \alpha$.
    \end{enumerate}
\end{definition}

\begin{theorem}
    \label{thmCategoryEquivalence}
    There is an equivalence of categories \upshape 
    \[
    \Holonomy^G \rightleftarrows \BundleConnection^G.
    \]
\end{theorem}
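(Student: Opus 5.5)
I will define two functors and check that their composites are naturally isomorphic to the respective identities. The first is the forgetful functor
\[
\mathcal H : \BundleConnection^G \to \Holonomy^G,\qquad \mathcal H(\pi : (E,\xi_0)\to (X,x_0), A) = (X, x_0, H^A_{\xi_0}),
\]
using Proposition \ref{claimHolonomyRetraceEquivalence} to regard $H^A_{\xi_0}$ as a homomorphism on $\Omega_{x_0}(X)$. On a morphism $(F,f)$ I set $\mathcal H(F,f) = f$. To see that $f$ is a morphism of $\Holonomy^G$, I will show that $F\circ \hor_A(\gamma,0,\xi_0)$ is $A'$-horizontal, using $F\circ A_r\tilde\alpha = A'_r(F\circ\tilde\alpha)$. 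It projects to $f\circ\gamma$ and starts at $\xi_0'$, so by uniqueness of horizontal lifts it equals $\hor_{A'}(f\circ\gamma,0,\xi_0')$. Evaluating at $1$ and using $G$-equivariance of $F$ gives $H^{A'}_{\xi_0'}([f\circ\gamma]) = H^A_{\xi_0}([\gamma])$. Smoothness of $H^A_{\xi_0}$ on $\Omega_{x_0}(X)$ I would take from Definition \ref{defHolonomyRepresentation}, transported through the quotient in the same way as in Proposition \ref{claimHolonomyRetraceEquivalence}.

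The second functor is the reconstruction
\[
\mathcal K : \Holonomy^G\to\BundleConnection^G,\qquad \mathcal K(X,x_0,H) = \big(\tau_H : (\mathcal F_{x_0}(X)\times_H G, \llbracket x_0, e\rrbracket)\to (X,x_0), \Theta^H\big).
\]
This uses Proposition \ref{claimAssociatedBundleIsPrincipalBundle} and Lemma \ref{lemAssociatedUniversalConnection}. For a morphism $f$, I set $\mathcal K(f) = (F_f, f)$ with $F_f\llbracket[\alpha],g\rrbracket_H = \llbracket [f\circ\alpha], g\rrbracket_{H'}$. Well-definedness needs two things. First, $f\circ -$ respects reparameterization and retrace equivalence, which is immediate. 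Second, $F_f\llbracket[\alpha\vee\gamma], H([\gamma])^{-1}g\rrbracket = \llbracket [f\circ\alpha]\vee[f\circ\gamma], H'([f\circ\gamma])^{-1}g\rrbracket$, which is exactly the morphism condition $H'([f\circ\gamma]) = H([\gamma])$. Smoothness follows by lifting plots locally to $\smooth([0,1],X)\times G$ and applying Lemma \ref{lemCompositionIsSmooth}. The map $F_f$ is $G$-equivariant and pointed. Compatibility with the connections is a direct computation from the formula for $\Theta$: $f\circ (\tau\circ[\tilde\alpha])^{r\to s} = (\tau'\circ[f\circ\tilde\alpha])^{r\to s}$. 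Functoriality is clear.

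For the composite $\mathcal H\circ\mathcal K$, the computation in the proof of Theorem \ref{thmKobayashi} shows $H^{\Theta^H}_{\llbracket x_0,e\rrbracket}([\gamma]) = H([\gamma])$. Hence $\mathcal H\mathcal K(X,x_0,H) = (X,x_0,H)$ on the nose, and on morphisms we recover $f$, so $\mathcal H\mathcal K = \mathrm{id}$. For $\mathcal K\circ\mathcal H$, Theorem \ref{thmReconstruction} supplies $\Phi_E : \mathcal F_{x_0}(X)\times_{H^A_{\xi_0}}G\to E$. This is a $G$-equivariant diffeomorphism over $\mathrm{id}_X$ that intertwines $\Theta^H$ with $A$. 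It is pointed, since $\Phi_E\llbracket x_0,e\rrbracket = \hor_A(\text{const},0,\xi_0)(1) = \xi_0$. So $(\Phi_E,\mathrm{id}_X)$ is an isomorphism in $\BundleConnection^G$, with inverse morphism $(\Phi_E^{-1},\mathrm{id})$.

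The remaining step, which I expect to be the main point needing care, is naturality of $\Phi$: for a morphism $(F,f)$ one needs $F\circ\Phi_E = \Phi_{E'}\circ F_f$. Evaluating on $\llbracket[\alpha],g\rrbracket$, this reads
\[
F\big(\hor_A(\alpha,0,\xi_0)(1)\big)g = \hor_{A'}(f\circ\alpha,0,\xi_0')(1)g.
\]
This follows from the same horizontal-lift uniqueness argument as above, applied to an arbitrary path rather than a loop. One subtlety is that elements of $\mathcal F$ are paths on $[0,1]$ while $\hor_A$ takes paths in $\mathcal P X$. I would handle this by the convention that representatives are constant near the endpoints, so they extend to $\R$ as in the remark before Proposition \ref{claimHolonomyRetraceEquivalence}. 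The reparameterization property of Proposition \ref{propConnectionProperties} then makes all the values independent of the choices made.
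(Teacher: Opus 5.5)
Your proposal is correct and follows the paper's proof. You use the same reconstruction and forgetful functors, the same well-definedness check for $F_f$ via $H'([f\circ\gamma]) = H([\gamma])$, and the same horizontal-lift argument showing that $f$ is a morphism of the holonomy category, with Theorem~\ref{thmReconstruction} supplying the equivalence. You also make explicit what the paper compresses into its final sentence: $\mathcal H\circ\mathcal K=\mathrm{id}$ via the computation in Theorem~\ref{thmKobayashi}, pointedness of $\Phi_E$, and naturality $F\circ\Phi_E=\Phi_{E'}\circ F_f$.
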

\begin{proof}
    Given an object $(X, x_0, H)$ of $\Holonomy^G$, Kobayashi's theorem for Diffeology (Theorem \ref{thmKobayashi}) says that the pair 
    \[
    \big(\tau_H : (\mathcal F_{x_0}(X) \times_H G, \llbracket x_0, e \rrbracket ) \to (X, x_0),\ \Theta^H \big) 
    \]
    is an object of $\BundleConnection^G$. Given a $\Holonomy^G$-morphism $f : (X, x_0) \to (X', x_0')$, we obtain a $\BundleConnection^G$-morphism as follows: define
    \[
    F : \mathcal F_{x_0}(X) \times_H G \to \mathcal F_{x_0'}(X') \times_{H'} G
    \]
    by $F(\llbracket [\alpha], g \rrbracket_H) = \llbracket [f\circ \alpha], g \rrbracket_{H'}$. Note that $F$ is well-defined, since
    \begin{align*}
    F(\llbracket [\alpha \vee \gamma], H([\gamma])\inverse g\rrbracket_H) &= \llbracket [f \circ (\alpha \vee \gamma)], H([\gamma])\inverse g \rrbracket_{H'} \\
    &= \llbracket [(f\circ \alpha) \vee (f \circ \gamma)], H'([f \circ \gamma])\inverse g \rrbracket_{H'} \\
    &= \llbracket [f \circ \alpha], g \rrbracket_{H'} \\
    &= F(\llbracket [\alpha], g \rrbracket_H),
    \end{align*}
    where we have used the fact that $f$ is a $\Holonomy^G$-morphism. In fact, the pair $(F, f)$ defines a $G$-bundle morphism from $\tau_H$ to $\tau_{H'}$ and satisfies $F(\llbracket x_0, e \rrbracket_H) = \llbracket x_0', e \rrbracket_{H'}$, $f(x_0) = x_0'$, and
    \[
    F \circ \Theta^H_r([\tilde \alpha]) = \Theta_r^{H'}(F \circ [\tilde \alpha]).
    \]
    All this data defines a functor $\Holonomy^G \to \BundleConnection^G$.

    We now describe the inverse functor $\BundleConnection^G \to \Holonomy^G$, which will be a type of ``forgetful functor.'' Given an object $\big( \pi : (E, \xi_0) \to (X, x_0), A \big)$ of $\BundleConnection^G$, we obtain the object $(X, x_0, H_{\xi_0}^A)$ of $\Holonomy^G$, where
    \[
    H_{\xi_0}^A : \Omega_{x_0}X \to G
    \]
    is the holonomy representation of $A$. Given a $\BundleConnection^G$-morphism $(F, f)$ from $(\pi : (E, \xi_0) \to (X, x_0), A)$ to $(\pi' : (E', \xi_0') \to (X', x_0'), A')$, we have that the map $f : (X, x_0) \to (X', x_0')$ defines a $\Holonomy^G$-morphism. Indeed, the assumption that
    \[
    F \circ A_r\tilde \alpha = A'_r(F\circ \tilde \alpha),
    \]
    implies, for all $[\gamma] \in \Omega_{x_0} X$,
    \begin{align*}
        \xi_0'\cdot H_{\xi_0'}^{A'}([f \circ \gamma]) &= \hor_{A'}(f\circ \gamma, 0, \xi_0')(1) \\
        &= \hor_{F \circ A}(f \circ \gamma, 0, F(\xi_0))(1) \\
        &= F\big( \hor_A(\gamma, 0, \xi_0)(1) \big) \\
        &= F\big( \xi_0\cdot H_{\xi_0}^A([\gamma]) \big) \\
        &= \xi_0'\cdot H_{\xi_0}^A([\gamma])
    \end{align*}
    where we have used the fact that $F$ is $G$-equivariant, which gives us $H^{A'}_{\xi_0'}([f\circ \gamma]) = H^A_{\xi_0}([\gamma])$. Finally, the fact that the pair $\Holonomy^G \rightleftarrows \BundleConnection^G$ is an equivalence of categories follows from Theorem \ref{thmReconstruction}. 

\end{proof}

\printbibliography
\end{document}